\newcommand{\N}{\mathbb{N}}
\newcommand{\Z}{\mathbb{Z}}
\newcommand{\R}{\mathbb{R}}
\newcommand{\C}{\mathbb{C}}
\newcommand{\Hy}{\mathbb{H}}
\newcommand{\cO}{\mathcal{O}}
\newcommand{\la}{\left\langle}
\newcommand{\ra}{\right\rangle}
\newcommand{\tg}{\widetilde{\gamma}}
\theoremstyle{definition}
\newtheorem{thm}{Theorem}[subsection]
\newtheorem{corollary}[thm]{Corollary}
\newtheorem{lemma}[thm]{Lemma}
\newtheorem{prop}[thm]{Proposition}
\newtheorem{defn}[thm]{Definition}
\newtheorem{example}[thm]{Example}
\title[Hyperbolic surface group amalgams]{Abstract commensurability and quasi-isometry classification of hyperbolic surface group amalgams }
\author{Emily Stark}
\date{November 1, 2015}
\begin{document}
 
\begin{abstract}
 Let $\mathcal{X}_S$ denote the class of spaces homeomorphic to two closed orientable surfaces of genus greater than one identified to each other along an essential simple closed curve in each surface. Let $\mathcal{C}_S$ denote the set of fundamental groups of spaces in $\mathcal{X}_S$. In this paper, we characterize the abstract commensurability classes within $\mathcal{C}_S$ in terms of the ratio of the Euler characteristic of the surfaces identified and the topological type of the curves identified. We prove that all groups in $\mathcal{C}_S$ are quasi-isometric by exhibiting a bilipschitz map between the universal covers of two spaces in $\mathcal{X}_S$. In particular, we prove that the universal covers of any two such spaces may be realized as isomorphic cell complexes with finitely many isometry types of hyperbolic polygons as cells. We analyze the abstract commensurability classes within $\mathcal{C}_S$: we characterize which classes contain a maximal element within $\mathcal{C}_S$; we prove each abstract commensurability class contains a right-angled Coxeter group; and, we construct a common CAT$(0)$ cubical model geometry for each abstract commensurability class. 
\end{abstract}

 \maketitle
 
 \section{Introduction}
 
 Finitely generated infinite groups carry both an algebraic and a geometric structure, and to study such groups, one may study both algebraic and geometric classifications. Abstract commensurability defines an algebraic equivalence relation on the class of groups, where two  groups are said to be {\it abstractly commensurable} if they contain isomorphic subgroups of finite-index. Finitely generated groups may also be viewed as geometric objects, since a finitely generated group has a natural word metric which is well-defined up to quasi-isometric equivalence. Gromov posed the program of classifying finitely generated groups up to quasi-isometry.

A finitely generated group is quasi-isometric to any finite-index subgroup, so, if two finitely generated groups are abstractly commensurable, then they are quasi-isometric. Two fundamental questions in geometric group theory are to classify the abstract commensurability and quasi-isometry classes within a class of finitely generated groups and to understand for which classes of groups the characterizations coincide.  
 
A basic and motivating example is the class of groups isomorphic to the fundamental group of a closed orientable surface of genus greater than one. These groups act properly discontinuously and cocompactly by isometries on the hyperbolic plane, hence all such groups are quasi-isometric. In addition, every surface of genus greater than one finitely covers the genus two surface, so all groups in this class are abstractly commensurable. In particular, the quasi-isometry and abstract commensurability classifications coincide in this setting. Free groups, which may be realized as the fundamental group of surfaces with non-empty boundary, exhibit the same behavior; there is a unique quasi-isometry and abstract commensurability class among non-abelian free groups.  
 
In this paper, we present a complete solution to the quasi-isometry and abstract commensurability classification questions within the class $\mathcal{C}_S$ of groups isomorphic to the fundamental group of two closed orientable surfaces of genus greater than one identified along an essential simple closed curve in each. We prove that there is a single quasi-isometry class within $\mathcal{C}_S$ and infinitely many abstract commensurability classes.

\subsection{Abstract commensurability and quasi-isometry classification}

In Section $3$, we characterize the abstract commensurability classes within $\mathcal{C}_S$. Our classification uses work of Lafont, who proved that spaces obtained by identifying hyperbolic surfaces with non-empty boundary along their boundary components are {\it topologically rigid}: any isomorphism between fundamental groups of these spaces is induced by a homeomorphism between the spaces \cite{lafont} (see also \cite{crisppaoluzzi}). As a consequence, groups in the class $\mathcal{C}_S$ are abstractly commensurable if and only if the corresponding spaces built by identifying two surfaces along an essential closed curve in each have homeomorphic finite-sheeted covering spaces. We use this fact to obtain topological obstructions to commensurability. 

Before stating the full classification theorem, we present two corollaries: the abstract commensurability classification in the case that groups $G_1$ and $G_2$ are the fundamental groups of surfaces identified along separating curves, and the abstract commensurability classification in the case that groups $G_1$ and $G_2$ are the fundamental groups of surfaces identified along non-separating curves. 

 {\bf Corollary \ref{justsep} } 
  {\it If $S_1, S_2, S_3, S_4$ and $T_1, T_2, T_3, T_4$ are orientable surfaces of genus greater than or equal to one and with one boundary component, the $S_i$ are glued along their boundary to form $X_1$, and the $T_i$ are glued along their boundary to form $X_2$, then $\pi_1(X_1)$ and $\pi_1(X_2)$ are abstractly commensurable if and only if, up to reindexing, the quadruples $(\chi(S_1), \ldots, \chi(S_4))$ and $(\chi(T_1), \ldots, \chi(T_4))$ are equal up to integer scale. 
  }

 {\bf Corollary \ref{justnonsep}}
  {\it  If $S_{g_i}$ and $S_{g_i'}$ are orientable surfaces of genus greater than one identified to each other along a non-separating curve in each to form the space $X_i$ for $i=1,2$, then $\pi_1(X_1)$ and $\pi_1(X_2)$ are abstractly commensurable if and only if, up to reindexing, $\displaystyle \frac{\chi(S_{g_1})}{\chi(S_{g_1'})} = \frac{\chi(S_{g_2})}{\chi(S_{g_2'})}$.
  }
  
   The additional condition in the full classification within $\mathcal{C}_S$ given in Theorem \ref{classification} is that a separating curve that divides the surface exactly in half may be replaced by a non-separating curve on the same surface without changing the abstract commensurability class. We use the following notation. If $\gamma$ is an essential simple closed curve on a surface, the number $t(\gamma)$ is equal to one if $\gamma$ is non-separating, and is equal to $\frac{\chi(S_{r,1})}{\chi(S_{s,1})}$ if $\gamma$ separates the surface into two subsurfaces $S_{r,1}$ and $S_{s,1}$ and $\chi(S_{r,1}) \leq \chi(S_{s,1})$. Our full classification theorem is given as follows. 

{\bf Theorem \ref{classification}.} {\it If $G_1, G_2 \in   \mathcal{C}_S$, then $G_1$ and $G_2$ are abstractly commensurable if and only if, up to relabeling, $G_1 \cong \pi_1(S_{g_1})*_{\left\langle a_1 \right\rangle} \pi_1(S_{g_1'})$ and $G_2 \cong \pi_1(S_{g_2})*_{\left\langle a_2 \right\rangle} \pi_1(S_{g_2'})$, the amalgams are given by the monomorphisms $a_i \mapsto [\gamma_i] \in \pi_1(S_{g_i})$ and  $a_i \mapsto [\gamma_i'] \in \pi_1(S_{g_i'})$, and the following conditions hold: 

  \quad \quad (a) \, $\displaystyle \frac{\chi(S_{g_1})}{\chi(S_{g_1'})} = \frac{\chi(S_{g_2})}{\chi(S_{g_2'})}$, \quad \quad  \quad (b) \, $t(\gamma_1) = t(\gamma_2)$, \quad \quad \quad (c) \, $t(\gamma_1') = t(\gamma_2')$.
 }

 \vskip.2in
 
 The quasi-isometry classification within $\mathcal{C}_S$ stands in contrast to the abstract commensurability classification. Groups in the class $\mathcal{C}_S$ act geometrically on a piecewise hyperbolic CAT$(-1)$ space built by identifying infinitely many copies of the hyperbolic plane along geodesic lines in a `tree-like' fashion. The following theorem, proven in Section 4.3, states that all such spaces have the same large-scale geometry; the quasi-isometry classification follows as a consequence. 

{\bf Theorem \ref{bclassification}.}  {\it Let $\mathcal{X}_S$ denote the class of spaces homeomorphic to two closed orientable surfaces of genus greater than one identified along an essential simple closed curve in each. If $X_1, X_2 \in \mathcal{X}_S$ and $\widetilde{X}_1$ and $\widetilde{X}_2$ are their universal covers equipped with a CAT$(-1)$ metric that is hyperbolic on each surface, then there exists a bilipschitz equivalence $\phi:\widetilde{X}_1 \rightarrow \widetilde{X}_2$.}

{\bf Corollary \ref{qiclassification}.}
 {\it If $G_1, G_2 \in \mathcal{C}_S$, then $G_1$ and $G_2$ are quasi-isometric. }

Our approach in the proof of Theorem \ref{bclassification} is to realize $\widetilde{X}_1$ and $\widetilde{X}_2$ as isomorphic cell complexes with finitely many isometry types of convex hyperbolic polygons as cells. We show there is a bilipschitz equivalence between hyperbolic $n$-gons that restricts to dilation on each edge. Thus, there is a well-defined cellular homeomorphism $\widetilde{X}_1 \rightarrow \widetilde{X}_2$ that restricts to a bilipschitz map on each tile, and we prove this extends to a bilipschitz map $\widetilde{X}_1 \rightarrow \widetilde{X}_2$. 

Groups in the class $\mathcal{C}_S$ also admit a CAT$(0)$ geometry, and an alternative approach to the quasi-isometry classification was given by Malone \cite{malone}, who applied the work of Behrstock--Neumann on the bilipschitz equivalence of fattened trees used in the quasi-isometric classification of graph manifold groups \cite{behrstockneumann}.

 The abstract commensurability classes within $\mathcal{C}_S$ are finer than the quasi-isometry classes; there is a unique quasi-isometry class in $\mathcal{C}_S$, and there are infinitely many abstract commensurability classes. Whyte, in \cite{whyte}, proves a similar result for free products of hyperbolic surface groups.
 
 \begin{thm} (\cite{whyte}, Theorem 1.6, 1.7)
 {\it Let $\Sigma_g$ be the fundamental group of a surface of genus $g\geq 2$ and let $m,n \geq 2$. Let $\Gamma_1 \cong  \Sigma_{a_1}*\Sigma_{a_2}* \ldots * \Sigma_{a_n}$ and $\Gamma_2 \cong \Sigma_{b_1}*\Sigma_{b_2}* \ldots * \Sigma_{b_m}$. Then $\Gamma_1$ and $\Gamma_2$ are quasi-isometric, and $\Gamma_1$ and $\Gamma_2$ are abstractly commensurable if and only if \[\frac{\chi(\Gamma_1)}{n-1} = \frac{\chi(\Gamma_2)}{m-1}.\]   }
\end{thm}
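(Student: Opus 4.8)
\emph{Proof plan.} The statement has two independent parts, and I will treat them in turn.

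\emph{Quasi-isometry.} The plan here is to observe that every closed orientable surface group of genus at least two acts geometrically on $\mathbb{H}^2$, so all of the factors $\Sigma_{a_i}$ and $\Sigma_{b_j}$ are quasi-isometric to one another, and that $\Gamma_1$ and $\Gamma_2$ each split as a free product of at least two infinite one-ended groups, hence each acts geometrically on a ``tree of hyperbolic planes'' obtained by gluing a copy of $\mathbb{H}^2$ at every vertex of the Bass--Serre tree of the splitting. I would then either invoke the theorem of Papasoglu and Whyte that the quasi-isometry type of a finite graph of groups with infinite vertex groups depends only on the set of quasi-isometry types of the one-ended vertex groups (in particular not on the number of factors), or construct a bilipschitz equivalence $\widetilde{X}_1 \to \widetilde{X}_2$ directly after realizing both trees of spaces over a common regular tree, in the spirit of the proof of Theorem \ref{bclassification}. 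Since the relevant set of quasi-isometry types is $\{[\Sigma_2]\}$ for every member of the family, this shows all such groups are quasi-isometric.

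\emph{Two commensurability invariants.} For the ``if and only if,'' the plan is to use two isomorphism invariants of a finitely generated group $G$, each multiplied by the index on passing to a finite-index subgroup: the Euler characteristic $\chi(G)$, and the quantity $\nu(G) := (k-1)+r$, where $G \cong G_1 \ast \cdots \ast G_k \ast F_r$ is the Grushko decomposition (with each $G_i$ freely indecomposable and not infinite cyclic). Uniqueness of the Grushko decomposition makes $\nu$ well defined, and since a closed surface group of genus at least two is one-ended, hence freely indecomposable and not $\Z$, we get $\nu(\Sigma_{a_1} \ast \cdots \ast \Sigma_{a_n}) = n-1$ with no free part. Multiplicativity of $\chi$ under finite index is standard; for $\nu$ I would argue topologically: let $Y = \Sigma_{a_1} \vee \cdots \vee \Sigma_{a_n}$, realize a finite-index subgroup $H \leq \Gamma_1$ of index $K$ as $\pi_1$ of the corresponding $K$-sheeted cover $\widehat{Y} \to Y$, note that $\widehat{Y}$ is a union of closed surfaces (finite covers of the $\Sigma_{a_i}$, each of genus at least two) glued at the $K$ preimages of the wedge point, each such point lying on exactly $n$ of the surfaces, and read off from a van Kampen / Kurosh computation that $H \cong (\textrm{those surface groups}) \ast F_b$ with $b = (n-1)K - P + 1$, where $P$ is the number of surfaces; hence $\nu(H) = (P-1)+b = (n-1)K = K\,\nu(\Gamma_1)$.

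\emph{The two directions.} For the forward direction, if $H_1 \cong H_2$ with $H_i \leq \Gamma_i$ of finite index $K_i$, the two invariants give $K_1\chi(\Gamma_1) = K_2\chi(\Gamma_2)$ and $K_1(n-1) = K_2(m-1)$, and dividing yields $\chi(\Gamma_1)/(n-1) = \chi(\Gamma_2)/(m-1)$. For the converse, which is equivalent to $(\sum_i \chi(\Sigma_{a_i}))/(n-1) = (\sum_j \chi(\Sigma_{b_j}))/(m-1)$, I would build explicit covers: fix a negative even integer $\chi_0$ divisible by every $\chi(\Sigma_{a_i})$ and $\chi(\Sigma_{b_j})$ with $|\chi_0|$ large enough that each $e_i := \chi_0/\chi(\Sigma_{a_i}) \geq 2$ and $e_j' := \chi_0/\chi(\Sigma_{b_j}) \geq 2$. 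The kernel of the surjection $\Gamma_1 \to \prod_i \Z/e_i$ sending $\Sigma_{a_i}$ onto the $i$-th coordinate is finite-index and, by the Kurosh subgroup theorem, isomorphic to $\Sigma_{\chi_0}^{\ast P_1} \ast F_{c_1}$, where $\Sigma_{\chi_0}$ is the closed surface of Euler characteristic $\chi_0$, $P_1 = \sum_i \prod_{j \neq i} e_j$, and $c_1 \geq 1$ is pinned down by $\chi$ and $\nu$; similarly $\Gamma_2$ has a finite-index subgroup isomorphic to $\Sigma_{\chi_0}^{\ast P_2}\ast F_{c_2}$. Passing to the $t$-fold cyclic cover that unwinds one free generator and is trivial on every surface factor carries $\Sigma_{\chi_0}^{\ast P}\ast F_c$ to $\Sigma_{\chi_0}^{\ast tP}\ast F_{t(c-1)+1}$ for any $t \geq 1$, and a short computation identifies the hypothesis with the equality $P_1(c_2-1) = P_2(c_1-1)$; choosing $t_1, t_2 \geq 1$ with $t_1 P_1 = t_2 P_2$ and $t_1(c_1-1) = t_2(c_2-1)$ then produces finite-index subgroups of $\Gamma_1$ and of $\Gamma_2$ both isomorphic to $\Sigma_{\chi_0}^{\ast t_1 P_1} \ast F_{t_1(c_1-1)+1}$, so $\Gamma_1$ and $\Gamma_2$ are abstractly commensurable. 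The main obstacle I anticipate is the bookkeeping: establishing multiplicativity of $\nu$ cleanly via the incidence graph of the surface pieces, and, in the converse, arranging the two-stage covering construction so that the two resulting free products are literally isomorphic. Both computations close up --- but only because of the exact relation between Euler characteristic and number of factors in the hypothesis --- and verifying that the arithmetic closes is the crux.
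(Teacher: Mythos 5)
The paper does not prove this theorem: it is reproduced from Whyte \cite{whyte} purely as a point of comparison with the results of Section~3, so there is no internal proof to compare against. Evaluating your argument on its own terms: it is correct. The two invariants are well chosen --- $\chi$ and $\nu(G) = (k-1)+r$ coming from the Grushko decomposition, both multiplicative under finite index --- and the topological verification of multiplicativity of $\nu$ for $\Gamma_1 \cong \pi_1(\vee_i \Sigma_{a_i})$ via the incidence graph of the $K$-fold cover (free rank $(n-1)K - P + 1$, so $\nu(H) = (n-1)K$) is right; the forward implication follows immediately since $\chi/\nu$ is then an abstract commensurability invariant equal to $\chi(\Gamma_i)/(n_i-1)$. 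For the converse, the two-stage cover does close up: the kernel of $\Gamma_1 \twoheadrightarrow \prod_i \Z/e_i$ with $e_i = \chi_0/\chi(\Sigma_{a_i})$ is $\Sigma_{\chi_0}^{\ast P_1} \ast F_{c_1}$ with $P_1 = \sum_i \prod_{j\neq i} e_j$, $K_1 = \prod_i e_i$, $c_1 = (n-1)K_1 - P_1 + 1$; the cyclic unwinding carries $\Sigma_{\chi_0}^{\ast P}\ast F_c$ to $\Sigma_{\chi_0}^{\ast tP}\ast F_{t(c-1)+1}$; and the proportionality $P_1(c_2-1) = P_2(c_1-1)$ reduces, after clearing $K_1K_2$ and cancelling the $(m-1)(n-1)K_1K_2$ terms, to exactly $(m-1)K_2\chi_0 P_1 = (n-1)K_1 \chi_0 P_2$, which is the hypothesis multiplied by $K_1K_2\chi_0$. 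Two small points worth making explicit in a full write-up: first, that $c_i \geq 1$, which follows from $c_i - 1 = K_i\bigl[(n_i-1) - \sum 1/e\bigr] \geq 0$ because every $e \geq 2$ and $n_i \geq 2$ (so the ``unwinding'' move is available, and $c_1 = 1$ forces $c_2 = 1$ under the proportionality, in which case only the $t_i P_i$ equality needs arranging); second, that each cover in the Kurosh decomposition is a connected orientable closed surface of Euler characteristic $\chi_0$, hence genuinely a single $\Sigma_{\chi_0}$. With those noted, the argument is sound.
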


 Similarly, there is a unique quasi-isometry class and infinitely many abstract commensurability classes among the set of fundamental groups of closed graph manifolds, which exhibit a related geometry to groups in $\mathcal{C}_S$ \cite{behrstockneumann}, \cite{neumann}. 
 
 On the other hand, there are many classes of groups for which the quasi-isometry and abstract commensurability classifications coincide. Such classes include non-trivial free products of finitely many finitely generated abelian groups excluding $\Z/2\Z*\Z/2\Z$ \cite{behrstockjanuszkiewiczneumann}, non-uniform lattices in the isometry group of a symmetric space of strictly negative sectional curvature other than the hyperbolic plane \cite{schwartz}, and fundamental groups of $n$-dimensional ($n \geq 3$) connected complete finite-volume hyperbolic manifolds with non-empty geodesic boundary (which must be compact in dimension three) \cite{frigerio}.

 This paper concerns surfaces of negative Euler characteristic. Cashen provides a quasi-isometry classification of the fundamental groups of a disjoint union of (Euclidean) tori glued together along annuli \cite{cashen}.

 \subsection{Analysis of the abstract commensurability classes}
 
  Recent surveys on notions of commensurability are given by Paoluzzi \cite{paoluzzi} and Walsh \cite{walsh}.

 Let $\mathcal{G} \subset \mathcal{C}_S$ be an abstract commensurability class within $\mathcal{C}_S$. A {\it maximal element} for $\mathcal{G}$ is a group $G_0$ that contains every group in $\mathcal{G}$ as a finite-index subgroup. A classic result in the setting of hyperbolic $3$-manifolds is that of Margulis \cite{margulis}, who proved that if $H \leq PSL(2,\C)$ is a discrete subgroup of finite covolume, then there exists a maximal element in the abstract commensurability class of $H$ within $PSL(2, \C)$ if and only if $H$ is non-arithmetic. It follows that the commensurability class of a non-arithmetic finite-volume hyperbolic 3-manifold contains a {\it minimal element}: there exists an orbifold finitely covered by every other manifold in the commensurability class.

 In Section 5.1, we state an alternative formulation of the abstract commensurability classification within $\mathcal{C}_S$, and we show that for abstract commensurability classes $\mathcal{G} \subset \mathcal{C}_S$, the existence of a maximal element $G_0 \in \mathcal{C}_S$ depends on whether the class contains the fundamental group of a surface identified along non-separating curve. 
 
 {\bf Proposition \ref{maximalelement}. } {\it Let $\mathcal{G} \subset \mathcal{C}_S$ be an abstract commensurability class within $\mathcal{C}_S$. There is a maximal element for $\mathcal{G}$ in $\mathcal{C}_S$ if and only if $\mathcal{G}$ does not contain the fundamental group of a surface identified along a non-separating curve to another surface. 
 }

 In Section 5.2, we show that if the abstract commensurability class $\mathcal{G} \subset \mathcal{C}_S$ contains the fundamental group of two surfaces identified along non-separating curves in both surfaces, then there exists a right-angled Coxeter group that is a maximal element for the class. In the remaining case, that the class contains the fundamental group of two surfaces identified along a non-separating curve in exactly one of the surfaces and does not contain the fundamental group of two surfaces identified along a non-separating curve in both, Proposition \ref{maximalelement} shows there is no maximal element in $\mathcal{C}_S$, and the existence of a maximal element outside of $\mathcal{C}_S$ remains open.

 Hyperbolic surface groups are finite-index subgroups of right-angled Coxeter groups. We apply our abstract commensurability classification within $\mathcal{C}_S$ (Theorem \ref{classification}) to prove the following.
 
 {\bf Proposition \ref{ACracg}. } {\it Each group in $\mathcal{C}_S$ is abstractly commensurable to a right-angled Coxeter group.  }
 
 In other words, each abstract commensurability class of a group in $\mathcal{C}_S$ contains a right-angled Coxeter group. In particular, in Section 5.2, we show the fundamental group of two surfaces identified along a separating curve in each and the fundamental group of two surfaces identified along curves of {\it topological type one} (See definition \ref{toptype}) are finite-index subgroups of a right-angled Coxeter group. It is an open question whether each group in $\mathcal{C}_S$ is a finite-index subgroup of a right-angled Coxeter group in the remaining case.

The result in Theorem \ref{classification} is related to the abstract commensurability classification of the following right-angled Coxeter groups introduced by Crisp--Paoluzzi in \cite{crisppaoluzzi} and further studied by Dani--Thomas in \cite{danithomas}. Let \[W_{m,n} = W(\Gamma_{m,n}),\] be the right-angled Coxeter group associated to the graph $\Gamma_{m,n}$, which consists of a circuit of length $m+4$ and a circuit of length $n+4$ which are identified along a common subpath of edge-length $2$. For all $m$ and $n$, the group $W_{m,n}$ is the orbifold fundamental group of a $2$-dimensional reflection orbi-complex $\mathcal{O}_{m,n}$. We show in Lemma \ref{orbicovers} that for all $m$ and $n$, $\mathcal{O}_{m,n}$ is finitely covered by a space consisting of two hyperbolic surfaces identified along non-separating essential simple closed curves. Conversely, we prove all amalgams of surface groups over homotopy classes of  non-separating essential simple closed curves are finite index subgroups of $W_{m,n}$ for some $m$ and $n$, dependent on the Euler characteristic of the two surfaces. Thus, our theorem extends their result. 
  
\begin{corollary} \label{CP} (\cite{crisppaoluzzi} Theorem 1.1) {\it Let $1\leq m \leq n$ and $1 \leq k \leq \ell$. Then $W_{m,n}$ and $W_{k,\ell}$ are abstractly commensurable if and only if $\frac{m}{n} = \frac{k}{\ell}$.  }
\end{corollary}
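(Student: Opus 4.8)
The plan is to move the question into the class $\mathcal{C}_S$ and then apply the non-separating case of the classification, Corollary \ref{justnonsep}. By Lemma \ref{orbicovers}, for all $m,n$ the orbi-complex $\mathcal{O}_{m,n}$ is finitely covered by a space $X_{m,n} \in \mathcal{X}_S$ that is the union of two closed hyperbolic surfaces identified along a non-separating simple closed curve in each; hence $W_{m,n}$, being the orbifold fundamental group of $\mathcal{O}_{m,n}$, is abstractly commensurable to $\pi_1(X_{m,n})$, and $\pi_1(X_{m,n})$ is an amalgam of surface groups over non-separating curves, i.e.\ an element of $\mathcal{C}_S$ of the type appearing in Corollary \ref{justnonsep}. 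Since abstract commensurability is an equivalence relation, $W_{m,n}$ and $W_{k,\ell}$ are abstractly commensurable if and only if $\pi_1(X_{m,n})$ and $\pi_1(X_{k,\ell})$ are, and by Corollary \ref{justnonsep} this holds exactly when the ratio of the Euler characteristics of the two surfaces comprising $X_{m,n}$ agrees, up to reindexing, with the corresponding ratio for $X_{k,\ell}$.

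It then remains to identify these ratios. Following the Crisp--Paoluzzi description, $\mathcal{O}_{m,n}$ is assembled from a right-angled hyperbolic $(m+4)$-gon and a right-angled hyperbolic $(n+4)$-gon: each, mirrored along all its sides with order-two corner reflectors, is a reflection orbifold of orbifold Euler characteristic $1-\tfrac{m+4}{4}=-\tfrac{m}{4}$, resp.\ $-\tfrac{n}{4}$, and in $\mathcal{O}_{m,n}$ the two polygons are attached along the sub-orbifold corresponding to the shared path of edge-length $2$. Writing $X_{m,n}=S\cup_\gamma S'$ with $S$ covering the $(m+4)$-gon piece and $S'$ the $(n+4)$-gon piece, and letting $d$ be the (single) degree of $X_{m,n}\to\mathcal{O}_{m,n}$, multiplicativity of orbifold Euler characteristic expresses $\chi(S)$ and $\chi(S')$ as $d$ times the orbifold Euler characteristics of the respective pieces; the contribution of the shared gluing locus is the same for both pieces and the factor $d$ is common, so in the ratio everything cancels except the ``$m$'' versus ``$n$'', giving $\chi(S)/\chi(S')=m/n$, and likewise $m/n$ and $k/\ell$ for the two families. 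Because $1\le m\le n$ and $1\le k\le\ell$, the reciprocal ratios $n/m$ and $\ell/k$ are both at least $1$, so the ``up to reindexing'' clause in Corollary \ref{justnonsep} forces $m/n=k/\ell$ (the reindexed alternative occurs only when $m=n$ and $k=\ell$, in which case both ratios equal $1$). Hence $W_{m,n}$ and $W_{k,\ell}$ are abstractly commensurable if and only if $m/n=k/\ell$.

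The main obstacle is the Euler-characteristic bookkeeping: one must make precise how $\mathcal{O}_{m,n}$ decomposes into the two polygonal reflection orbifolds and the shared sub-orbifold, which sides become mirrors and which become the curve $\gamma$ along which the surfaces are glued, and verify that the cover furnished by Lemma \ref{orbicovers} restricts to a cover of the \emph{same} degree over each polygonal piece, so that the degree and the shared locus genuinely cancel from the ratio. Granting the identification of the polygonal pieces with the $(m+4)$- and $(n+4)$-gon reflection orbifolds, the rest is a formal consequence of Corollary \ref{justnonsep} and transitivity of abstract commensurability.
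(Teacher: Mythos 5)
Your proof is correct and takes essentially the same approach the paper intends: use Lemma \ref{orbicovers} to replace $W_{m,n}$ by the abstractly commensurable surface amalgam $S_{m+1}\cup_\gamma S_{n+1}$ (in the Crisp--Paoluzzi indexing), apply Corollary \ref{justnonsep} (or Theorem \ref{classification}), and carry out the Euler-characteristic computation $\chi(S_{m+1})/\chi(S_{n+1}) = (-2m)/(-2n) = m/n$. The one obstacle you flag — that the cover restricts to the same degree over each polygonal piece — is automatic here since the cover is connected and the shared reflection edge lifts to the single gluing curve $\gamma$, so the degree over the edge (hence over each side) is uniformly $8$; likewise the "contribution of the shared gluing locus" is in fact zero (a reflection segment has orbifold Euler characteristic $0$), so the cancellation you invoke is immediate rather than needing careful bookkeeping.
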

   
Moreover, in Proposition \ref{CPAC}, we apply our abstract commensurability classification to prove that if $G \in \mathcal{C}_S$, then $G$ is abstractly commensurable to $W_{m,n}$ for some $m$ and $n$ if and only if $G$ is the fundamental group of two surfaces identified to each other along curves of {\it topological type one} (see Definition \ref{toptype}).
    
A {\it model geometry} for a finitely generated group $G$ is a proper metric space $X$ on which $G$ acts properly discontinuously and cocompactly by isometries. Given an abstract commensurability class $\mathcal{G} \subset \mathcal{C}_S$, one can ask whether there is a common model geometry for every group in $\mathcal{G}$. If $\mathcal{G}$ has a maximal element $G_0$, any model geometry for $G_0$ provides a common model geometry for every group in $\mathcal{G}$. For $\mathcal{G} \subset \mathcal{C}_S$ it is not known, in general, if there is a maximal element for $\mathcal{G}$. Nonetheless, we prove there is a common CAT$(0)$ cubical model geometry for every group in $\mathcal{G}$. 

{\bf Proposition \ref{commoncubing}.} {\it Let $\mathcal{G} \subset \mathcal{C}_S$ be an abstract commensurability class within $\mathcal{C}_S$. There exists a $2$-dimensional CAT$(0)$ cube complex $X$ so that if $G \in \mathcal{G}$, $G$ acts properly discontinuously and cocompactly by isometries on $X$. Moreover, the quotient $X/G$ is a non-positively curved special cube complex. 
   }
   
 Similarly, as described in \cite{moshersageevwhyteI}, one can ask if there is a common model geometry for every group in a quasi-isometry class. It is not known whether all groups in $\mathcal{C}_S$ act properly discontinuously and cocompactly by isometries on the same proper metric space.

\subsection{Outline} In Section 2, we define the spaces $\mathcal{X}_S$ and the class of groups $\mathcal{C}_S$ examined in this paper. Section 3 contains the abstract commensurability classification within $\mathcal{C}_S$. In Section 4, we define a piecewise hyperbolic metric on spaces in $\mathcal{X}_S$, construct a bilipschitz equivalence between the universal covers of any such spaces, and conclude all groups in $\mathcal{C}_S$ are quasi-isometric. Section 5 contains an analysis of the abstract commensurability classes, which includes a description of maximal elements for an abstract commensurability class,  a description of the relation of groups in $\mathcal{C}_S$ to the class of right-angled Coxeter groups, and the construction of the common cubical geometry for all groups in an abstract commensurability class within $\mathcal{C}_S$. 
 
\subsection{Acknowledgments} The author is deeply grateful for many discussions with her Ph.D. advisor Genevieve Walsh. The author wishes to thank Pallavi Dani for pointing out a gap in an earlier version of this paper, and her peers at Tufts University for helpful conversations throughout this work. The author is thankful for very useful comments and corrections from an anonymous referee. This material is partially based upon work supported by the National Science Foundation Graduate Research Fellowship Program under Grant No. DGE-0806676. 
 
 
 \section{Surfaces and the class of groups $\mathcal{C}_S$}



We use $S_{g,b}$ to denote the orientable surface of genus $g$ and $b$ boundary components. The {\it Euler characteristic} of a surface $S_{g,b}$ is $\chi(S_{g,b}) = 2-2g-b$. Unless stated otherwise, we will say ``surface'' to mean a compact, connected, oriented surface. We will typically be interested in surfaces of negative Euler characteristic. 

We say a surface $S$ {\it admits a hyperbolic metric} if there exists a complete, finite-area Riemannian metric on $S$ of constant curvature $-1$ and the boundary of $S$ is totally geodesic: the geodesics in $\partial S$ are geodesics in $S$. A surface $S$ may be endowed with a hyperbolic metric via a free and properly discontinuous action by isometries of $\pi_1(S)$ on the hyperbolic plane $\Hy^2$. 

\begin{thm}
 {\it If $S$ is a surface with $\chi(S)<0$, then $S$ admits a hyperbolic metric. }
\end{thm}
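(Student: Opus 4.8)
The plan is to produce a hyperbolic metric on $S$ by realizing $S$ as a quotient of $\Hy^2$ by a discrete, torsion-free group of isometries. The two cases—$S$ closed and $S$ with nonempty boundary—are handled slightly differently, so I would split accordingly.

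First, for the closed case, I would argue by building a fundamental polygon. If $S = S_{g}$ with $g \geq 2$ (which is forced by $\chi(S) < 0$ when $b=0$), the standard approach is to exhibit a regular hyperbolic $4g$-gon all of whose interior angles equal $\frac{2\pi}{4g}$; such a polygon exists because as one varies the side length from $0$ to $\infty$, the interior angle of a regular $4g$-gon in $\Hy^2$ varies continuously from the Euclidean value $\frac{(4g-2)\pi}{4g}$ down to $0$, and $\frac{2\pi}{4g} < \frac{(4g-2)\pi}{4g}$ precisely when $g \geq 1$ — and strictly inside the range when $g \geq 2$. Gluing the sides of this polygon by the usual pattern $a_1 b_1 a_1^{-1} b_1^{-1} \cdots a_g b_g a_g^{-1} b_g^{-1}$, the angle sum at the single resulting vertex is $4g \cdot \frac{2\pi}{4g} = 2\pi$, so by the Poincaré polygon theorem the side-pairing isometries generate a discrete subgroup $\Gamma < \mathrm{Isom}^+(\Hy^2)$ acting freely and properly discontinuously with quotient homeomorphic to $S_g$, and the hyperbolic metric on $\Hy^2$ descends to $S_g$.

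For the bordered case $S = S_{g,b}$ with $b \geq 1$ and $\chi(S) < 0$, the cleanest route is to realize $S$ as a subsurface of a closed hyperbolic surface, or equivalently to build it directly by pairs of pants. One can decompose $S$ into finitely many pairs of pants $S_{0,3}$ (possible exactly because $\chi(S) < 0$), observe that each pair of pants admits a hyperbolic metric with geodesic boundary of any prescribed boundary lengths — this is the classical fact that for any three positive reals $\ell_1,\ell_2,\ell_3$ there is a unique right-angled hyperbolic hexagon with alternating sides of those lengths, and doubling it across three alternate sides yields a hyperbolic pair of pants — and then glue the pieces along interior curves, matching boundary lengths. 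The matched geodesic boundary curves remain geodesic in the glued surface (Fermi coordinates near each curve patch smoothly), the boundary components of $S$ itself are totally geodesic by construction, and the resulting metric is complete of finite area and constant curvature $-1$. Alternatively, and perhaps more in the spirit of the surrounding paper, one notes that $S_{g,b}$ with $\chi < 0$ embeds as a $\pi_1$-injective subsurface with geodesic boundary of a closed hyperbolic surface $S_h$ (cap off boundary components with handles), and the preimage of $S$ in $\widetilde{S_h} = \Hy^2$ under the covering projection, cut along the geodesic representatives of $\partial S$, descends to a hyperbolic metric on $S$ with totally geodesic boundary.

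**The main obstacle** is the angle/length bookkeeping needed to invoke the Poincaré polygon theorem in the closed case — verifying that the regular $4g$-gon with the correct angle genuinely exists (the continuity/monotonicity argument for the angle as a function of side length) and that the identified vertices have total angle exactly $2\pi$ so no cone point is introduced; and, in the bordered case, checking that the hexagon-doubling construction produces genuinely geodesic (not merely piecewise geodesic) boundary and gluing curves, which reduces to the standard fact that a curve fixed pointwise by a local isometric involution is geodesic. Everything else — discreteness, freeness, completeness, finite area — then follows from the standard covering-space and fundamental-domain machinery.
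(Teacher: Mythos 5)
The paper states this theorem as standard background and provides no proof of its own, so there is nothing in the text to compare against line by line. Your proposal is a correct and complete rendition of the classical argument. A few remarks on the details you flag as potential obstacles: the continuity/monotonicity of the interior angle of a regular hyperbolic $4g$-gon as a function of its side length is a genuine (and routine) calculation, and your observation that the Euclidean angle $\frac{(4g-2)\pi}{4g}$ strictly exceeds $\frac{2\pi}{4g}$ exactly when $g \geq 2$ correctly isolates why $g=1$ fails; the Poincar\'e polygon theorem then gives discreteness, freeness, and a complete quotient. For the bordered case, your reduction to right-angled hexagons with prescribed alternating side lengths, doubled to form pants and glued along interior curves, is exactly the standard construction (as in Chapter~10 of Farb--Margalit, which the paper itself cites elsewhere), and the check that boundary and gluing curves are geodesic because they are fixed-point sets of local isometric involutions is the right justification. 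One small point worth making explicit in a polished write-up: the existence of a pants decomposition for an orientable compact surface with $\chi < 0$ is itself a (standard but not completely trivial) topological lemma, typically proved by induction on $-\chi$; you assert it parenthetically, which is fine here but should be cited or proved if this were to stand alone. With that caveat, the argument is sound, and since the paper offers no alternative proof, yours is as close to ``the paper's approach'' as one can be.
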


A {\it closed curve} in a surface $S$ is a continuous map $S^1 \rightarrow S$, and we often identify a closed curve with its image in $S$. We use $[\gamma]$ to denote the homotopy class of a curve $\gamma$. A closed curve is {\it essential} if it is not homotopic to a point or boundary component.  An essential closed curve $\gamma$ is {\it primitive} if is not the case that $[\gamma] =[\rho^n]$ for some closed curve $\rho$. A closed curve is {\it simple} if it is embedded. A {\it homotopy class of simple closed curves} is a homotopy class in which there exists a simple closed curve representative. A {\it multicurve} in $S$ is the union of a finite collection of disjoint simple closed curves in $S$. 

If $\gamma$ is a simple closed curve on a surface $S$, the surface obtained by {\it cutting} $S$ along $\gamma$ is a compact surface $S_{\gamma}$ equipped with a homeomorphism $h$ between these two boundary components of $S_{\gamma}$  so that the quotient $S_{\gamma}/(x \sim h(x))$ is homeomorphic to $S$ and the image of these distinguished boundary components under the quotient map is $\gamma$.

If $X_1$ and $X_2$ are topological spaces and $A_1 \subset X_1$, $A_2 \subset X_2$ so that $A_1 \cong A_2$, we say $X$ is obtained by {\it identifying} $X_1$ and $X_2$ along $A_1$ and $A_2$ if $X = X_1 \sqcup X_2 /(x\sim h(x))$ for some homeomorphism $h:A_1 \rightarrow A_2$ and all $x \in A_1$. If $A$ is the image of $A_1$ and $A_2$ under the quotient map, we denote the space $X$ as $X = X_1 \cup_{A} X_2$. 

Let $\mathcal{X}$ denote the class of spaces homeomorphic to two hyperbolic surfaces identified along an essential closed curve in each. Let $\mathcal{X}_S \subset \mathcal{X}$ be the subclass in which the curves that are identified are simple. Let $\mathcal{C}$ be the class of groups isomorphic to the fundamental group of a space in $\mathcal{X}$, and let $\mathcal{C}_S \subset \mathcal{C}$ be the subclass of groups isomorphic to the fundamental group of a space in $\mathcal{X}_S$. If $G \in \mathcal{C}$ then $G \cong \pi_1(S_{g})*_{\left\langle \gamma \right\rangle} \pi_1(S_{h})$, the amalgamated free product of two hyperbolic surface groups over $\Z$. We suppress in our notation the monomorphisms $i_g:\left\langle\gamma\right\rangle \rightarrow \pi_1(S_{g})$ and $i_h: \left\langle\gamma\right\rangle \rightarrow \pi_1(S_h)$ given by $i_g:\gamma \mapsto [\gamma_g]$, $i_h:\gamma \mapsto [\gamma_h]$, where $\gamma_g:S^1 \rightarrow S_g$ and $\gamma_h:S^1 \rightarrow S_h$. Note that if $X \in \mathcal{X}_S$ consists of two surfaces identified to each other along separating curves, $\pi_1(X)$ may be expressed as an amalgamated free product of surface groups in up to three ways.

\section{Abstract commensurability classes within $\mathcal{C}_S$}

There are many notions of commensurability in group theory and topology. The first step taken in our abstract commensurability classification is to translate this algebraic question into a topological one, as described in the following section.

\subsection{Finite covers and topological rigidity}

A description of the subgroup structure of an amalgamated free product is given in the following theorem of Scott and Wall.

\begin{thm} \label{subgroup} (\cite{scottwall}, Theorem 3.7) {\it If $G \cong A*_C B$ and if $H \leq G$, then $H$ is the fundamental group of a graph of groups, where the vertex groups are subgroups of conjugates of $A$ or $B$ and the edge groups are subgroups of conjugates of $C$. } \end{thm}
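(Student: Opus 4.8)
The plan is to derive this from Bass--Serre theory. By definition, $G \cong A *_C B$ is the fundamental group of a graph of groups whose underlying graph is a single edge joining vertices $u$ and $v$, with vertex groups $A$ and $B$, edge group $C$, and edge monomorphisms the given inclusions of $C$. Associated to this splitting is its Bass--Serre tree $T$: a tree carrying a simplicial $G$-action without inversions, such that $G\backslash T$ is the single edge, the stabilizer of a vertex lying over $u$ (resp.\ over $v$) is a conjugate $gAg^{-1}$ (resp.\ $gBg^{-1}$), and the stabilizer of an edge is a conjugate $gCg^{-1}$.

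First I would restrict the $G$-action on $T$ to the subgroup $H$. This yields a simplicial action of $H$ on the tree $T$, again without inversions since it is the restriction of an inversion-free action. For a vertex $x \in T$ the stabilizer $H \cap \mathrm{Stab}_G(x)$ is therefore a subgroup of a conjugate of $A$ or of $B$, and the $H$-stabilizer of an edge is a subgroup of a conjugate of $C$. Next I would invoke the fundamental theorem of Bass--Serre theory: any group acting without inversions on a tree is the fundamental group of the quotient graph of groups $H\backslash\backslash T$, whose underlying graph is the (connected) quotient $H\backslash T$ and whose vertex and edge groups are the $H$-stabilizers of chosen lifts to $T$ of the vertices and edges of $H\backslash T$. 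Combining this with the previous sentence shows that $H$ is the fundamental group of a graph of groups with vertex groups subgroups of conjugates of $A$ or $B$ and edge groups subgroups of conjugates of $C$, which is the assertion.

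The only point requiring care --- and the main, if mild, obstacle --- is the bookkeeping involved in forming $H\backslash\backslash T$: one must choose a spanning tree of $H\backslash T$, lift it to $T$, extend to a lift of every edge, and record the conjugating elements that appear in the edge monomorphisms; none of this alters the conclusion that the vertex and edge groups sit inside conjugates of $A$, $B$, and $C$. Were the action to admit inversions one would first pass to the barycentric subdivision $T'$, on which the induced action is inversion-free and whose vertex and edge stabilizers are still among the groups $H \cap gAg^{-1}$, $H \cap gBg^{-1}$, $H \cap gCg^{-1}$ (a stabilizer of an edge of $T'$ being contained in the stabilizer of an incident edge or vertex of $T$). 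Since restricting an inversion-free action keeps it inversion-free, this case does not actually arise, and the argument applies directly.
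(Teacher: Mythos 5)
Your proof is correct and complete. It is the standard Bass--Serre argument: form the Bass--Serre tree $T$ of the splitting $G \cong A*_C B$ (vertex stabilizers the conjugates of $A$ and $B$, edge stabilizers the conjugates of $C$), restrict the inversion-free action to $H$, observe that $H$-stabilizers are intersections $H \cap gAg^{-1}$, $H \cap gBg^{-1}$, $H \cap gCg^{-1}$, and apply the structure theorem to present $H$ as $\pi_1$ of the quotient graph of groups $H\backslash\!\backslash T$.

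The paper itself gives no proof of this statement (it is cited as \cite{scottwall}, Theorem 3.7), so there is no in-text argument to compare against. Scott and Wall's original treatment is topological rather than algebraic: they realize $G$ as $\pi_1$ of a graph of aspherical spaces (with vertex spaces $K(A,1)$, $K(B,1)$ and edge space $K(C,1)$), pass to the covering space corresponding to $H$, and observe that the cover inherits a graph-of-spaces decomposition whose pieces cover the vertex and edge spaces below. Your Bass--Serre tree is precisely the tree dual to the decomposition of the universal cover in their picture, so the two proofs are equivalent; yours is the purely algebraic phrasing, theirs the covering-space phrasing. Either is acceptable, and each buys what the other does.

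One small streamlining: your closing paragraph on passing to the barycentric subdivision is unneeded here, as you yourself observe --- restricting an inversion-free action keeps it inversion-free, so the hypothetical never arises. Cutting that paragraph would tighten the write-up without losing anything.
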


Any finite sheeted cover of the space $X = S_g \cup_{\gamma} S_h$, where $\gamma$ is the image of $\gamma_g:S^1 \rightarrow S_g$ and $\gamma_h:S^1 \rightarrow S_h$ under identification, consists of a set of surfaces which cover $S_g$ and a set of surfaces which cover $S_h$, identified along multicurves that are the preimages of $\gamma_g$ and $\gamma_h$. These covers are examples of {\it simple, thick, $2$-dimensional hyperbolic P-manifolds} (see \cite{lafont}, Definition 2.3.) The following {\it topological rigidity} theorem of Lafont allows us to address the abstract commensurability classification for members in $\mathcal{C}_S$ from a topological point of view. Corollary \ref{top} also follows from the proof of Proposition 3.1 in \cite{crisppaoluzzi}.

\begin{thm} (\cite{lafont}, Theorem 1.2) \label{toprigidity} {\it
 Let $X_1$ and $X_2$ be a pair of simple, thick, $2$-dimensional hyperbolic $P$-manifolds, and assume that $\phi:\pi_1(X_1) \rightarrow \pi_1(X_2)$ is an isomorphism. Then there exists a homeomorphism $\Phi:X_1 \rightarrow X_2$ that induces $\phi$ on the level of fundamental groups. }
\end{thm}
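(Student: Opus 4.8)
The plan is to upgrade the algebraic isomorphism $\phi$ to a homeomorphism in three stages: recognize that $\phi$ respects the canonical graph-of-groups decomposition carried by the branching locus; realize the resulting matching of chambers and branching curves by honest homeomorphisms of the constituent surfaces; and then glue these together and check that the result induces $\phi$.

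\emph{Stage 1 (rigidity of the branching decomposition).} Cutting each $X_i$ along its branching locus $\Sigma_i$ produces a disjoint union of compact hyperbolic surfaces with nonempty geodesic boundary, the \emph{chambers}, and by van Kampen's theorem $\pi_1(X_i)$ is the fundamental group of a finite graph of groups whose vertex groups are the chamber groups and whose edge groups are the infinite cyclic subgroups carried by the components of $\Sigma_i$; the subgroups of such an amalgam are constrained by Theorem~\ref{subgroup}. I would first show this decomposition is canonical. A thick, hyperbolic $P$-manifold admits a geometric action on a one-ended $\mathrm{CAT}(-1)$ space (the ``tree of hyperbolic planes'' described in the introduction), so $\pi_1(X_i)$ is word-hyperbolic and one-ended and $\phi$ induces a homeomorphism $\partial\phi \colon \partial_\infty \pi_1(X_1) \to \partial_\infty \pi_1(X_2)$ of connected Gromov boundaries. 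Thickness means at least three chamber sheets meet along each component of $\Sigma_i$, so the pair of points of $\partial_\infty \pi_1(X_i)$ fixed by a branching cyclic subgroup is a cut pair separating the boundary into at least three complementary pieces, whereas the cyclic subgroup of an essential curve interior to a chamber separates it into only two; equivalently, the branching subgroups are exactly the stabilizers occurring at the infinite-valence vertices of the JSJ decomposition of $\pi_1(X_i)$ over infinite cyclic subgroups, in the sense of Bowditch. This feature is preserved by $\partial\phi$, so $\phi$ carries branching cyclic subgroups to branching cyclic subgroups up to conjugacy, hence vertex groups to vertex groups, and induces an isomorphism of the underlying graphs respecting incidences. (Alternatively, one argues with normal forms in the amalgam: the $\phi$-image of a branching $\Z$ must again split the target with at least three complementary pieces, which forces it to be branching.)

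\emph{Stage 2 (realizing chambers).} Fix a chamber $Y$ of $X_1$; by Stage~1, $\phi$ restricts (up to conjugacy) to an isomorphism $\pi_1(Y) \to \pi_1(Y')$ onto a chamber group of $X_2$, carrying the peripheral structure of $Y$ --- the conjugacy classes of boundary subgroups, grouped according to which component of $\Sigma_1$ each boundary circle is glued to --- onto that of $Y'$. Since a compact surface with nonempty boundary is determined up to homeomorphism by its fundamental group together with its peripheral conjugacy classes (the relative version of the Dehn--Nielsen--Baer theorem, due to Nielsen and Zieschang), there is a homeomorphism $\Phi_Y \colon Y \to Y'$ inducing this isomorphism; moreover, after an isotopy of $Y'$, one may arrange that $\Phi_Y$ restricts on each boundary circle to the identification prescribed by $\phi$ (a self-homeomorphism of a circle is determined up to isotopy by a degree $\pm 1$ that $\phi$ pins down).

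\emph{Stage 3 (gluing and verification).} Perform Stage~2 for every chamber, choosing the boundary behaviours coherently so that for each component of $\Sigma_1$ the maps $\Phi_Y$ of the incident chambers agree along it; these then assemble into a homeomorphism $\Phi \colon X_1 \to X_2$. To check $\Phi_* = \phi$, fix a basepoint in one chamber, where $\Phi_*$ and $\phi$ agree by construction, and propagate the equality across $\Sigma_1$ using the matching of edge groups and of the underlying graphs from Stage~1, via a Bass--Serre/van Kampen bookkeeping argument. The main obstacle is Stage~1: making the branching decomposition canonical under an \emph{arbitrary} group isomorphism, not merely under homeomorphisms --- this is precisely where thickness and hyperbolicity are indispensable, since without thickness a branching circle could be algebraically indistinguishable from an ordinary essential curve in a chamber. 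A secondary technical point is arranging the boundary identifications in Stage~3 to be simultaneously consistent around every component of the branching locus.
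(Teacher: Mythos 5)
This theorem is not proved in the paper at all: it is quoted verbatim as Lafont's Theorem 1.2 and used as a black box (the paper also notes the alternative route via Crisp--Paoluzzi's Proposition 3.1). There is therefore no internal proof to compare your sketch against, only the external source. That said, your blueprint does match the strategy of the known proofs: Lafont's argument detects the branching decomposition from the topology of the boundary at infinity of the CAT$(-1)$ universal cover (a ``tree of circles'' glued along pairs of points), with thickness supplying the local-cut-pair asymmetry that distinguishes a lift of the branching locus from a geodesic interior to a chamber; one then realizes the induced chamber matching by homeomorphisms of compact hyperbolic surfaces with boundary via the peripheral-structure form of Dehn--Nielsen--Baer, and glues. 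Crisp--Paoluzzi's argument is closer in flavor to your ``alternative'' JSJ formulation.

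Two places in your sketch are doing far more work than they acknowledge. In Stage~1, the statement ``the cyclic subgroup of an essential curve interior to a chamber separates [the boundary] into only two'' is the easy direction; the substance is showing that the characterization is exhaustive and intrinsic --- that every maximal two-ended subgroup over which the group splits with three or more coends is in fact a branching subgroup, and not, say, some cut pair of $\partial_\infty$ that doesn't arise from any simple geodesic at all. Establishing that the boundary homeomorphism respects the branching stratum is the technical core of Lafont's paper (via local cut-point analysis in dimension $2$, and ``strong Jordan separation'' in higher dimensions), and your sketch defers it entirely. In Stage~3, ``choosing the boundary behaviours coherently'' hides a genuine simultaneity problem: for each branching circle, the several incident chamber homeomorphisms must be isotoped rel nothing to agree pointwise on that circle, and these adjustments must not ruin one another on circles shared between stages; one needs an explicit isotopy argument (or appeal to contractibility of $\mathrm{Homeo}^+(S^1)$) rather than a bookkeeping remark. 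You correctly flag both of these as the obstacles, but as written they are gaps, not footnotes --- a complete proof would have to supply exactly what Lafont's paper supplies.
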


\begin{corollary} \label{top} {\it
 Let $G, G' \in \mathcal{C}_S$ with $G \cong \pi_1(X)$, $G' \cong \pi_1(X')$ and $X, X' \in \mathcal{X_S}$. Then $G$ and $G'$ are abstractly commensurable if and only if $X$ and $X'$ have homeomorphic finite-sheeted covering spaces. }
\end{corollary}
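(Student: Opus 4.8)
The plan is to prove the two implications using only the Galois correspondence between finite-index subgroups and finite-sheeted covers together with Lafont's topological rigidity theorem (Theorem~\ref{toprigidity}). Note first that each $X \in \mathcal{X}_S$, and hence each of its connected finite covers, is a connected CW complex, so it is locally contractible and semilocally simply connected; therefore subgroups of its fundamental group correspond to connected covers, with finite-index subgroups corresponding to finite-sheeted covers.

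For the ``if'' direction, suppose $p:Y \to X$ and $p':Y' \to X'$ are finite-sheeted covers together with a homeomorphism $h:Y \to Y'$. Then $p_*\pi_1(Y)$ has finite index in $G \cong \pi_1(X)$, $p'_*\pi_1(Y')$ has finite index in $G' \cong \pi_1(X')$, and $h$ induces an isomorphism $\pi_1(Y) \cong \pi_1(Y')$; hence $G$ and $G'$ contain isomorphic finite-index subgroups, i.e. they are abstractly commensurable. This direction uses neither hyperbolicity nor rigidity.

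For the ``only if'' direction, assume $G$ and $G'$ are abstractly commensurable, and fix finite-index subgroups $H \le G$, $H' \le G'$ together with an isomorphism $\psi:H \to H'$. Let $Y \to X$ and $Y' \to X'$ be the connected finite-sheeted covers with $p_*\pi_1(Y) = H$ and $p'_*\pi_1(Y') = H'$. As recorded in the paragraph preceding the statement, $Y$ is obtained by gluing the surfaces covering $S_g$ and $S_h$ along the multicurve lying over $\gamma$, and is therefore a simple, thick, $2$-dimensional hyperbolic P-manifold in the sense of \cite{lafont}; likewise for $Y'$. Regarding $\psi$ as an isomorphism $\pi_1(Y) \to \pi_1(Y')$ and applying Theorem~\ref{toprigidity} yields a homeomorphism $Y \to Y'$. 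Thus $X$ and $X'$ have homeomorphic finite-sheeted covering spaces, as required.

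The substantive input is Lafont's theorem, whose conclusion is a homeomorphism rather than merely a homotopy equivalence — this is precisely what makes the ``only if'' direction go through, and it is the only real obstacle. Beyond invoking it, the sole point to verify is that an arbitrary connected finite cover of a space in $\mathcal{X}_S$ is again a space to which Theorem~\ref{toprigidity} applies; this was essentially dispatched in the discussion before the statement, and I would at most spell out that the branching locus of such a cover is a disjoint union of circles (so the P-manifold is simple) with four local sheets along each circle, exactly as in $X$ (so it is thick), and that each surface sheet, being a cover of a hyperbolic surface with boundary, again has negative Euler characteristic.
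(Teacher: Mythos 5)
Your proof is correct and is exactly the argument the paper has in mind: the paper presents Corollary~\ref{top} as an immediate consequence of Theorem~\ref{toprigidity} together with the preceding observation that finite covers of spaces in $\mathcal{X}_S$ are simple, thick, $2$-dimensional hyperbolic P-manifolds. Your write-up simply makes the two implications and the applicability of Lafont's theorem explicit, which matches the intended (unwritten) proof.
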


We will make repeated use of the following lemma. 

\begin{lemma} \label{eulerchar} {\it 
 If $X$ is a CW-complex and $X'$ is a degree $n$ cover of $X$, then $\chi(X') = n\chi(X)$, where $\chi$ denotes Euler characteristic.  }
\end{lemma}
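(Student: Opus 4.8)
The plan is to lift the CW structure from $X$ to $X'$ along the covering map and then simply count cells. Write $p:X' \rightarrow X$ for the covering projection. Since the lemma is applied to compact spaces, I may assume $X$ has only finitely many cells; let $c_k(X)$ denote the number of $k$-cells of $X$, so that $\chi(X) = \sum_{k \geq 0} (-1)^k c_k(X)$, and likewise for $X'$ once we know it is a finite CW complex.

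First I would invoke the standard fact that $X'$ carries a CW structure for which $p$ is cellular and restricts to a homeomorphism from each cell of $X'$ onto a cell of $X$. Concretely, for each characteristic map $\Phi_\alpha:D^k \rightarrow X$ of a $k$-cell of $X$, the disk $D^k$ is simply connected, so $\Phi_\alpha$ has exactly $n$ lifts $D^k \rightarrow X'$, one for each of the $n$ points of the fibre $p^{-1}(\Phi_\alpha(0))$; these lifts serve as the characteristic maps of the $k$-cells of $X'$. Equivalently, each open cell $e \subset X$ is homeomorphic to an open disk, hence simply connected, so the degree-$n$ covering $p^{-1}(e) \rightarrow e$ is trivial and $p^{-1}(e)$ is a disjoint union of exactly $n$ open $k$-cells of $X'$.

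Consequently $c_k(X') = n\, c_k(X)$ for all $k$; in particular $X'$ is again a finite CW complex, and
\[
\chi(X') = \sum_{k \geq 0} (-1)^k c_k(X') = n \sum_{k \geq 0} (-1)^k c_k(X) = n\, \chi(X).
\]

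There is no serious obstacle here; the only point that needs a word of care is the verification that the lifted cells genuinely assemble into a CW structure on $X'$ (closure-finiteness and the weak topology), which is classical in the theory of covering spaces of CW complexes. Should one wish to bypass CW combinatorics, the identity also follows from the multiplicativity of the Euler characteristic under finite covers, applied to rational homology via the transfer homomorphism, but the cell count above is the most direct route.
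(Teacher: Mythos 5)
Your argument is correct and is the standard one. Note, however, that the paper itself states Lemma \ref{eulerchar} without proof, treating it as a classical fact; so there is no ``paper's approach'' to compare against. Your cell-counting proof is exactly the usual route: since each open cell of $X$ is simply connected, the covering restricted to a cell is trivial and its preimage consists of $n$ disjoint cells, giving $c_k(X') = n\,c_k(X)$ and hence multiplicativity of $\chi$. One small remark: you hedge by assuming $X$ is compact (finite CW complex), which is harmless here since the lemma is only invoked for compact surfaces and finite unions thereof, but the statement and the cell-counting argument hold just as well for any finite CW complex without needing a separate compactness hypothesis; and the alternative transfer-map argument you mention at the end is a legitimate second route, though it is heavier than what is needed.
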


\subsection{Statement of the classification and outline of the proof}

The abstract commensurability classification in the class $\mathcal{C}_S$ is given in terms of the ratio of the Euler characteristic of the surfaces identified and the {\it topological type} of the curves identified, which is defined as follows. An essential simple closed curve $\gamma$ on a surface $S$ is {\it non-separating} if $S\backslash \gamma$ is connected and is {\it separating} if $S \backslash \gamma$ consists of two connected surfaces, $S_{r,1}$ and $S_{s,1}$, of lower genus and a single boundary component. 

\begin{defn} \label{toptype}
 The {\it topological type} of an essential simple closed curve $\gamma:S^1 \rightarrow S$, denoted $t(\gamma)$, is equal to one if the curve is non-separating and equal to $\frac{\chi(S_{r,1})}{\chi(S_{s,1})}$ if the curve separates $S$ into subsurfaces $S_{r,1}$ and $S_{s,1}$ and $\chi(S_{r,1}) \leq \chi(S_{s,1})$. 
\end{defn}

{\bf Theorem \ref{classification}.} (Abstract commensurability classification within $\mathcal{C}_S$.)
 {\it  If $G_1, G_2 \in \mathcal{C}_S$, then $G_1$ and $G_2$ are abstractly commensurable if and only if, up to relabeling, $G_1 \cong \pi_1(S_{g_1})*_{\left\langle a_1 \right\rangle} \pi_1(S_{g_1'})$ and $G_2 \cong \pi_1(S_{g_2})*_{\left\langle a_2 \right\rangle} \pi_1(S_{g_2'})$, the amalgams are given by the monomorphisms $a_i \mapsto [\gamma_i] \in \pi_1(S_{g_i})$ and  $a_i \mapsto [\gamma_i'] \in \pi_1(S_{g_i'})$, and the following conditions hold. 

  \quad \quad (a) \, $\displaystyle \frac{\chi(S_{g_1})}{\chi(S_{g_1'})} = \frac{\chi(S_{g_2})}{\chi(S_{g_2'})}$, \quad \quad  \quad (b) \, $t(\gamma_1) = t(\gamma_2)$, \quad \quad \quad (c) \, $t(\gamma_1') = t(\gamma_2')$. 
}

One direction of the proof is constructive: if $G_1 \cong \pi_1(X_1)$ and $G_2 \cong \pi_1(X_2)$ satisfy the conditions of the theorem, we construct a common (regular) cover of the spaces $X_1$ and $X_2$. The other direction of the proof has three steps:
\begin{enumerate}
 \item Construct finite covers $p_i: Y_i \rightarrow X_i$ so that $Y_i$ consists of four surfaces each with two boundary components, one colored red and one colored blue; all red boundary components are identified and all blue boundary components are identified to form the connected space $Y_i$ with two singular curves; and, $\chi(Y_1) = \chi(Y_2)$. The existence of such covers is proven in Lemma \ref{existcovers}, and an example of these covers is given in Figure \ref{coverssamechi}. 
 \item Apply Proposition \ref{samechi}, which generalizes \cite[Theorem 5.3]{malone}, and proves that since $G_1$ and $G_2$ are abstractly commensurable, the finite covers $Y_1$ and $Y_2$ are homeomorphic. 
 \item Use the covering maps $p_1$ and $p_2$ to label the surfaces in $X_1$ and $X_2$ so that $G_1$ and $G_2$ are expressed as in the theorem and the conditions (a), (b), and (c) hold. 
\end{enumerate}

\subsection{Abstract commensurability classification} In this section we prove Theorem \ref{classification}, characterizing the abstract commensurability classes in $\mathcal{C}_S$. To prove the conditions in the theorem are necessary, the first step, denoted (1) above, is to take covers of spaces $X_1, X_2 \in \mathcal{X}_S$ with abstractly commensurable fundamental groups so that the covers of $X_1$ and $X_2$ have equal Euler characteristic. 

 \begin{figure}[t]
   \includegraphics[height=6.2cm]{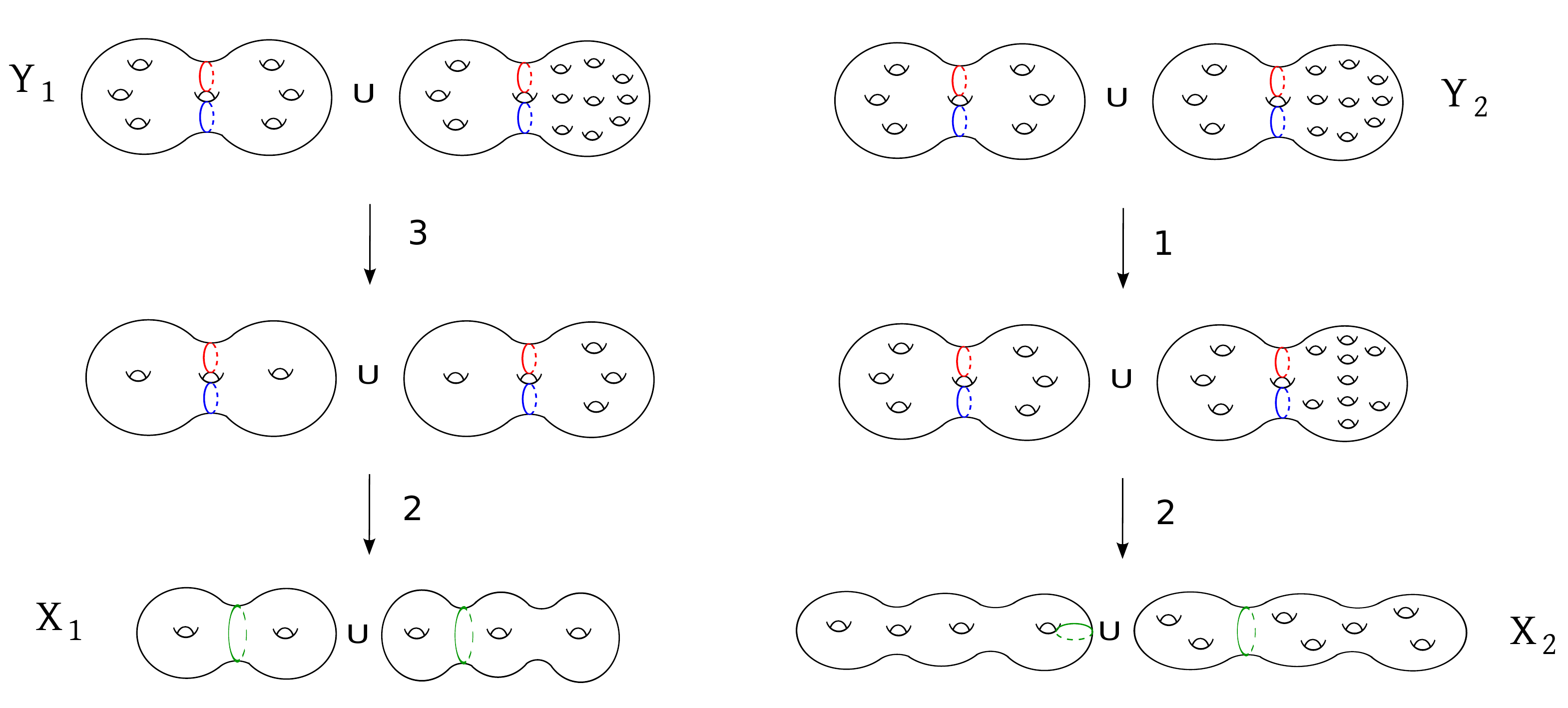}
   \caption[Finite covers of spaces in $\mathcal{X}_S$]{ {\small Above is an example of the covers $p_i:Y_i \rightarrow X_i$ constructed in Lemma \ref{existcovers}. In each union, the two curves of the same color are glued together to form singular curves. In this example, $\pi_1(X_1)$ and $\pi_1(X_2)$ are abstractly commensurable; one can check that conditions (a), (b), and (c) hold.  } }
  \label{coverssamechi}
 \end{figure}

\begin{lemma} \label{existcovers} 
 {\it  If $X_1, X_2 \in \mathcal{X}_S$, then there exist finite-sheeted covers $p_i:Y_i \rightarrow X_i$ so that $Y_i$ consists of four surfaces each with two boundary components, one colored red and one colored blue; all red boundary components are identified and all blue boundary components are identified to form the connected space $Y_i$ with two singular curves; and, $\chi(Y_1) = \chi(Y_2)$.  }
\end{lemma}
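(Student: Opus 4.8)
The plan is to build each $p_i$ as a composition of two finite covers. \textbf{Stage one} produces, for each $X=X_i$, a degree-two cover $Y_i' \to X_i$ that already has the required shape --- four compact surfaces with two boundary circles apiece, with all red boundary circles identified to one singular curve and all blue boundary circles identified to the other --- at the cost of only controlling its Euler characteristic through the formula $\chi(Y_i') = 2\chi(X_i)$. \textbf{Stage two} replaces $Y_i'$ by a cyclic cover of a prescribed degree that preserves this shape, which lets me force $\chi(Y_1) = \chi(Y_2)$. Since a composition of finite covers is a finite cover, the composite $Y_i \to Y_i' \to X_i$ is the required $p_i$.

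For stage one, write $X = S_g \cup_{\gamma} S_h$ with singular curve $\gamma$. From the $S_g$-side I extract two compact surfaces, each with exactly two boundary circles and each admitting a covering onto a component of $S_g \setminus \gamma_g$ under which both boundary circles map onto $\gamma_g$: if $\gamma_g$ is non-separating, take two disjoint copies of $S_g$ cut along $\gamma_g$; if $\gamma_g$ separates $S_g$ into $A_g \sqcup B_g$, take a connected double cover of each of $A_g$ and $B_g$, which exists because an essential separating curve on a closed surface of genus at least two bounds subsurfaces of positive genus, and which carries two boundary circles because the boundary class of a surface with one boundary component is trivial in $H_1$ and hence lifts. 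Doing the same on the $S_h$-side yields four surfaces $P_1, \dots, P_4$. Now form $Y_i'$ by identifying the eight boundary circles of the $P_j$ to two circles $R$ and $B$, four to each, so that: (i) over a point of $\gamma$ each of $R$ and $B$ meets exactly one of the two preimages of each of the four ``pages'' of $X$ at $\gamma$ --- this is precisely the condition making the piecewise-defined map $Y_i' \to X_i$ a local homeomorphism along $R \cup B$, hence a degree-two covering map --- and (ii) each $P_j$ has one boundary circle on $R$ and the other on $B$, which makes $Y_i'$ connected. A short case check (on whether $\gamma_g, \gamma_h$ separate) shows (i) and (ii) are simultaneously satisfiable by interleaving the identifications, and Lemma \ref{eulerchar} gives $\chi(Y_i') = 2\chi(X_i)$.

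For stage two, let $Z$ be any space of the target shape, with constituent surfaces $Q_1, \dots, Q_4$ and singular curves $R, B$, and let $d \geq 1$. Each $Q_j$ is a surface with two boundary circles, so $\pi_1(Q_j)$ is free, the two boundary classes are negatives of one another in $H_1(Q_j)$, and the red one is primitive; hence there is a surjection $\pi_1(Q_j) \to \Z/d\Z$ sending the red boundary class to a generator --- and thus the blue one to a generator as well. The corresponding cover $\widehat{Q}_j \to Q_j$ is connected of degree $d$, and the preimage of each of its boundary circles is a single circle mapping by degree $d$. A connected degree-$d$ cover of the circle is unique up to isomorphism, so the red boundary circles of the $\widehat{Q}_j$ can be identified to one another compatibly over $R$, and likewise the blue ones; the resulting space $\widehat{Z}$ is again of the target shape, the induced map $\widehat{Z} \to Z$ is a degree-$d$ cover, and $\chi(\widehat{Z}) = d\,\chi(Z)$ by Lemma \ref{eulerchar}. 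Applying this with $Z = Y_i'$ and $d = |\chi(X_{3-i})|$ gives $Y_i \to Y_i'$ with
\[ \chi(Y_i) \;=\; |\chi(X_{3-i})| \cdot 2\chi(X_i) \;=\; -2\,|\chi(X_1)|\,|\chi(X_2)| \]
for both $i = 1, 2$ (using $\chi(X_i) < 0$), so $\chi(Y_1) = \chi(Y_2)$.

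I expect stage one to be the main obstacle: one must choose the eight boundary-circle identifications so that the quotient is simultaneously a bona fide covering space of $X_i$ --- the local model at each of $R, B$ must match that at $\gamma$, which is the ``one preimage per page'' bookkeeping of (i) --- and connected, which is what forces the interleaving of (ii); checking that these demands are compatible is the real work, and must be carried out across the three cases for the separation types of $\gamma_g$ and $\gamma_h$. A subsidiary ingredient, also used in stage one, is the elementary observation that an essential separating curve on a closed surface of genus at least two cuts off subsurfaces of positive genus, which is what guarantees that the needed connected double covers exist and carry two boundary circles.
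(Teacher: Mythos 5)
Your proof is correct and uses essentially the same two-stage strategy as the paper: a degree-two cover that produces the four-piece shape with two singular circles, followed by a further cover whose degree is chosen so that $\chi(Y_1)=\chi(Y_2)$. The paper constructs both covers globally on each closed surface $S_{h_i}$ (first a double cover so $\rho_i$ has two preimages, then a cyclic cover along a transversal, then glues), targeting $L=-2\,\mathrm{lcm}(|\chi(X_1)|,|\chi(X_2)|)$; you build the covers piecewise from the complementary subsurfaces and reglue, targeting $-2|\chi(X_1)||\chi(X_2)|$, but these are bookkeeping differences rather than a genuinely different route.
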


\begin{proof}
  Let $X_1, X_2 \in \mathcal{X}_S$. Let \[L = -2\cdot \ell cm(|\chi(X_1)|, |\chi(X_2)| )\] and \[d_i = \frac{L}{\chi(X_i) }.\] Suppose $X_1 = S_{h_1} \cup_{c_1} S_{h_1'}$ and $X_2 = S_{h_2} \cup_{c_2} S_{h_2'}$ where $c_i$ identifies the curves $\rho_i:S^1 \rightarrow S_{h_i}$ and $\rho_i':S^1 \rightarrow S_{h_i'}$. To build the covers $Y_i$, first let $\widetilde{S_{h_i}}$ be a $2$-fold cover of $S_{h_i}$ so that $\rho_i$ has two preimages in the cover: if $\rho_i$ is non-separating, cut along $\rho_i$, take two copies of the resulting surface with boundary, and re-glue the boundary components in pairs; if $\rho_i$ is separating, cut along a non-separating essential simple closed curve in each of the subsurfaces bounded by $\rho_i$, take two copies of the resulting surface with boundary, and re-glue the boundary components in pairs. An example of these degree two covers appears in Figure \ref{coverssamechi}. Next, cut along a non-separating curve in the cover $\widetilde{S_{h_i}}$ that intersects each curve in the pre-image of $\rho_i$ in exactly one point. Take $\frac{d_i}{2}$ copies of the resulting surface with two boundary components and reglue the boundary components in pairs to get a surface $\widehat{S_{h_i}}$ which forms a $\frac{d_i}{2}$-fold cyclic cover of $\widetilde{S_{h_i}}$ and so that $\rho_i$ has two preimages in $\widehat{S_{h_i}}$, each of which covers $\rho_i$ by degree $\frac{d_i}{2}$. Construct $\widehat{S_{h_i'}}$ in the same way. Identify the two components of the preimage of $\rho_i$ in $\widehat{S_{h_i}}$ with the two components of the preimage of $\rho_i'$ in $\widehat{S_{h_i'}}$ in pairs to form $Y_i$, a $d_i$-fold cover of $X_i$. An example of these covers is illustrated in Figure \ref{coverssamechi}. By construction, $\chi(Y_1) = \chi(Y_2) = L$. 
\end{proof}

We will apply the following proposition (with $r=4$ and $n=2$). The idea to restrict to the setting of spaces with equal Euler characteristic appears in \cite[Theorem 5.3]{malone}, though the proof there has a small gap in the inductive step. In our proof, below, we complete Malone's proof and generalize his result. 

\begin{prop} \label{samechi} {\it 
 Let $G_1 \cong \pi_1(X_1)$ and $G_2 \cong \pi_1(X_2)$ where 
 \begin{center}
  $\displaystyle X_1 = \bigcup_{i=1}^r S_i$ \quad and \quad $\displaystyle X_2 = \bigcup_{i=1}^r T_i$; 
 \end{center}
 $r\geq 3$; $S_i$ is a surface with $n$ boundary components $\{\beta_{i1}, \ldots, \beta_{in}\}$; boundary components $\beta_{ij}$ and $\beta_{kj}$ are identified for all $1 \leq j \leq n$ and $1 \leq i \leq k \leq r$ so there are $n$ singular curves in $X_1$; and $X_2$ is similar. Suppose that $\chi(S_1) \leq \ldots \leq \chi(S_r)$, $\chi(T_1) \leq \ldots \leq \chi(T_r)$, and $\chi(X_1) = \chi(X_2)$. Then $G_1$ and $G_2$ are abstractly commensurable if and only if $S_i \cong T_i$ for all $1 \leq i \leq r$.  }
\end{prop}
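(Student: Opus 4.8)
The plan is to prove both directions, with the "only if" direction being the substantive one. For the "if" direction, suppose $S_i \cong T_i$ for all $i$; then the spaces $X_1$ and $X_2$ are built by gluing homeomorphic collections of surfaces along their boundary curves, but possibly via different gluing homeomorphisms of the singular curves. Since each curve is a circle, any self-homeomorphism is isotopic to the identity or to a reflection, and the gluing data can be matched after passing to a common finite cover (e.g.\ unwrapping each singular curve enough times, as in Lemma \ref{existcovers}); hence $X_1$ and $X_2$ have homeomorphic finite-sheeted covers, and by Corollary \ref{top} (equivalently, directly, since they are homeomorphic after a controlled adjustment) $G_1$ and $G_2$ are abstractly commensurable. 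I would keep this direction brief.

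For the "only if" direction, assume $G_1$ and $G_2$ are abstractly commensurable. By Corollary \ref{top}, $X_1$ and $X_2$ admit homeomorphic finite-sheeted covers; call the common cover $Z$, with $p_1 : Z \to X_1$ of degree $d_1$ and $p_2 : Z \to X_2$ of degree $d_2$. By Lemma \ref{eulerchar}, $d_1 \chi(X_1) = \chi(Z) = d_2 \chi(X_2)$, and since $\chi(X_1) = \chi(X_2)$ we get $d_1 = d_2 =: d$. Now I want to compare the surface pieces. The cover $Z$ decomposes: the preimage in $Z$ of the $i$-th surface $S_i \subset X_1$ is a disjoint union of surfaces covering $S_i$, and similarly $Z$ decomposes according to the pieces $T_i \subset X_2$. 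The singular curves of $Z$ are the preimages of the singular curves of $X_1$, which are the same as the preimages of the singular curves of $X_2$ (topological rigidity gives a homeomorphism carrying singular locus to singular locus — one should note the singular curves are topologically characterized as the points with no Euclidean-disk neighborhood, being points where $n \geq 3$ or more surface sheets meet, in fact where more than two local sheets meet). So $Z$ has a single well-defined decomposition into surface pieces along its singular curves, and each $S_i$-block and each $T_i$-block of $Z$ is a union of such pieces. The core combinatorial point is then a sum/matching argument: the multiset of Euler characteristics (with appropriate normalization by how the singular curves wrap) of pieces over $S_i$, summed, equals $d_i' \chi(S_i)$ for the relevant local degrees, and similarly for $T_i$; ordering $\chi(S_1) \leq \cdots \leq \chi(S_r)$ and $\chi(T_1)\leq\cdots\leq\chi(T_r)$ and using $\chi(X_1)=\chi(X_2)$ forces $\chi(S_i) = \chi(T_i)$ for each $i$ by an inductive extremal argument (the most negative piece of $Z$ must come from the most negative $S_i$ and the most negative $T_i$, etc.). This is exactly the inductive step where Malone's argument had the gap; the fix is that restricting to $\chi(X_1) = \chi(X_2)$ (arranged via Lemma \ref{existcovers}) makes the degrees equal and makes the extremal comparison go through cleanly. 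Finally, since orientable surfaces with a fixed number of boundary components are classified by Euler characteristic, $\chi(S_i) = \chi(T_i)$ together with all pieces having $n$ boundary components gives $S_i \cong T_i$.

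The main obstacle I expect is making the inductive extremal argument fully rigorous: one must track not just Euler characteristics but the combinatorics of how the $d$-fold cover $Z$ partitions into pieces over $X_1$ versus over $X_2$, ensuring that the homeomorphism $Z \to Z$ (identity) respecting both decompositions forces a block-by-block matching compatible with the orderings. Care is needed because a single surface $S_i$ can lift to several components in $Z$, each covering $S_i$ with its own degree, and the boundary-component count $n$ must be preserved under these covers — this is where the hypothesis that each singular curve has exactly $r$ surface-sheets (with $r \geq 3$) and the fixed number $n$ of them is used to pin down the local structure. I would isolate the counting lemma (every piece of $Z$ has exactly $n$ boundary components and covers some $S_i$ and some $T_j$) as the key technical step, then run the sorting/induction on the sorted Euler characteristic sequences to conclude.
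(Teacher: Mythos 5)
Your overall strategy matches the paper's: use topological rigidity to produce a homeomorphism $f$ between finite covers $\hat X_1 \to X_1$ and $\hat X_2 \to X_2$, use $\chi(X_1)=\chi(X_2)$ to equalize the covering degrees at a common $d$, decompose the common cover along its singular locus, and compare the sorted sequences $\chi(S_1)\leq\cdots\leq\chi(S_r)$ and $\chi(T_1)\leq\cdots\leq\chi(T_r)$ by an inductive argument. But the key comparison step is not correctly worked out, and the two candidate versions you offer each fail. Your ``counting lemma'' asserting that every piece of the cover has exactly $n$ boundary components is false: a component of $p_1^{-1}(S_i)$ covering $S_i$ with degree $\delta$ typically has $n\delta$ boundary components. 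And the single-piece extremal claim --- that the most negative piece of $Z$ must lie over the most negative $S_i$ and the most negative $T_j$ --- does not hold (a high-degree lift of a less negative piece can beat a degree-one lift of the most negative one), and even if it held it would only yield $\delta\chi(S_i)=\delta'\chi(T_j)$ for a priori different local degrees $\delta,\delta'$, not $\chi(S_1)=\chi(T_1)$.

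What is actually needed is an aggregate comparison, and this is the idea your proposal is missing. Write $p_1^{-1}(S_1)=\bigsqcup_j A_{1j}$; under the rigidity homeomorphism $f$, each $f(A_{1j})$ covers some piece $T_{1j}$ of $X_2$ with degree $d_{1j}$. The crucial combinatorial fact is that $\sum_j d_{1j}=d$, the common covering degree. This holds because the boundary of $f(p_1^{-1}(S_1))$ is the entire singular locus of $\hat X_2$, and each singular circle of $\hat X_2$ is incident to exactly one component $f(A_{1j})$ (in $\hat X_1$, each singular circle sees exactly one local sheet lying over each $S_i$; the $r\geq 3$ branching hypothesis is what makes the singular locus topologically detectable so that $f$ respects it). Pushing a single singular curve of $X_2$ up to the cover and summing local degrees then gives $\sum_j d_{1j}=d$. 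With this in hand, one computes $d\,\chi(S_1)=\sum_j d_{1j}\chi(T_{1j})\geq \chi(T_1)\sum_j d_{1j}=d\,\chi(T_1)$, forcing $\chi(S_1)=\chi(T_1)$; the same incidence count also matches the multiplicities $s=t$ of the minimal pieces so the induction can proceed on the remaining surfaces. Without the identity $\sum_j d_{1j}=d$, the sorting argument does not close, so that is the gap to fill.
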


\begin{proof} Suppose $G_1$ and $G_2$ are abstractly commensurable. Then there exist finite covers $p_1:\hat{X}_1 \rightarrow X_1$ and $p_2:\hat{X}_2 \rightarrow X_2$ with $\pi_1(\hat{X}_1) \cong \pi_1(\hat{X}_2)$. Since $\chi(X_1) = \chi(X_2)$, the covering maps $p_1$ and $p_2$ have the same degree, $d$. By Theorem \ref{toprigidity}, there exists a homeomorphism $f:\hat{X}_1 \rightarrow \hat{X}_2$ inducing the isomorphism between $\pi_1(\hat{X}_1)$ and $\pi_1(\hat{X}_2)$. 

Suppose 
\begin{eqnarray}
 \chi(S_1) = \ldots = \chi(S_s) < \chi(S_{s+1}) \leq \ldots \leq \chi(S_r) \\
 \chi(T_1) = \ldots = \chi(T_t)< \chi(T_{t+1}) \leq \ldots \leq \chi(T_r)
\end{eqnarray}
for some $s,t \leq r$. Without loss of generality, $\chi(S_1) \leq \chi(T_1)$ and if $\chi(S_1) = \chi(T_1)$, then $s \geq t$. 

Consider the full preimage in $\hat{X}_1$ of the surfaces $S_1, \ldots, S_s$ of least Euler characteristic in $X_1$. Let \[\mathcal{A}_i = p_1^{-1}(S_i).\]
The surface $\mathcal{A}_i$ may be disconnected; suppose $\mathcal{A}_i$ is the disjoint union of $k_i$ connected surfaces, \[\mathcal{A}_i = \bigsqcup_{j=1}^{k_i}A_{ij} .\]

Each component $f(A_{ij})$ of $f(\mathcal{A}_i)$ covers some surface $T_{ij} \in \{T_1, \ldots, T_r \} \subset X_2$ under the covering map $p_2$. Suppose $p_2:f(A_{ij}) \rightarrow T_{ij}$ is a degree $d_{ij}$ cover. For each $i$, the sum of the degrees $d_{ij}$ is equal to $d$ since the boundary of $f(\mathcal{A}_i)$ is the full preimage of the $n$ singular curves in $X_2$ and no component of the preimage of the singular curves is incident to more than one component of $f(\mathcal{A}_i)$. Thus, 

\begin{eqnarray*}
 d \cdot \chi(S_1) &=& \sum_{j=1}^{k_1} \chi(A_{1j}) \\
 &=& \sum_{j=1}^{k_1} \chi(f(A_{1j})) \\
 &=& \sum_{j=1}^{k_1} d_{1j} \cdot\chi(T_{1j}) \\
 &\geq& \chi(T_1) \cdot \sum_{j=1}^{k_1} d_{1j} \\
 &=& d \cdot \chi(T_1)
\end{eqnarray*}
Since $\chi(S_1) \leq \chi(T_1)$ by assumption, $\chi(S_1) = \chi(T_1)$. Each singular curve in $\hat{X}_2$ is incident to $s$ surfaces in $f(\mathcal{A}_1) \cup \ldots \cup f(\mathcal{A}_s)$, so $p_2( f(\mathcal{A}_1) \cup \ldots \cup f(\mathcal{A}_s))$ must have in its image at least $s$ surfaces in $X_2$, each of which must have Euler characteristic equal to $\chi(S_1)$ by the above argument. Thus, since $s \leq t$, we have $\chi(S_i) = \chi(T_i)$ for $1 \leq i \leq s = t$. Moreover, $p_1^{-1}\left(\bigcup_{i=1}^s S_i)\right) = p_2^{-1}\left(\bigcup_{i=1}^s T_i)\right)$, so the above argument can be repeated (at most finitely many times) with the remaining surfaces in $X_1$ and $X_2$ of strictly larger Euler characteristic, proving the claim. 

The other direction of the statement is clear: if $a_i = b_i$ for $1 \leq i \leq r$, then $\pi_1(G_1) \cong \pi_1(G_2)$, so $G_1$ and $G_2$ are abstractly commensurable. 
\end{proof}

{\bf Remark:} The condition that $\chi(X_1) = \chi(X_2)$ can be omitted from the above proposition, and we get the conclusion that $\frac{\chi(S_i)}{\chi(T_i)} = c$ for some constant $c$ and all $1 \leq i \leq r$. This generalization appears in upcoming joint work with Pallavi Dani and Anne Thomas on abstract commensurability classes of certain right-angled Coxeter groups.

\begin{thm} \label{classification}  {\it  If $G_1, G_2 \in \mathcal{C}_S$, then $G_1$ and $G_2$ are abstractly commensurable if and only if they may be expressed as $G_1 \cong \pi_1(S_{g_1})*_{\left\langle a_1 \right\rangle} \pi_1(S_{g_1'})$ and $G_2 \cong \pi_1(S_{g_2})*_{\left\langle a_2 \right\rangle} \pi_1(S_{g_2'})$,  given by the monomorphisms $a_i \mapsto [\gamma_i] \in \pi_1(S_{g_i})$ and  $a_i \mapsto [\gamma_i'] \in \pi_1(S_{g_i'})$, and the following conditions hold. 

  \quad \quad (a) \, $\displaystyle \frac{\chi(S_{g_1})}{\chi(S_{g_1'})} = \frac{\chi(S_{g_2})}{\chi(S_{g_2'})}$, \quad \quad  \quad (b) \, $t(\gamma_1) = t(\gamma_2)$, \quad \quad \quad (c) \, $t(\gamma_1') = t(\gamma_2')$. 
}
\end{thm}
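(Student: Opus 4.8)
The plan is to prove the two implications separately. The ``if'' direction will be an explicit construction of a common finite cover of the two spaces, which by Corollary \ref{top} suffices, while the ``only if'' direction is the substantive one and is built on the preparatory covers of Lemma \ref{existcovers} together with Proposition \ref{samechi}.

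For the ``only if'' direction, suppose $G_1$ and $G_2$ are abstractly commensurable and write $G_i \cong \pi_1(X_i)$ with $X_i = S_{g_i} \cup_{\gamma_i} S_{g_i'} \in \mathcal{X}_S$. First I would apply Lemma \ref{existcovers} to obtain finite covers $p_i : Y_i \to X_i$, of degree $d_i$, where each $Y_i$ is a union of four surfaces with one red and one blue boundary component, with all red boundary components identified and all blue boundary components identified, and $\chi(Y_1) = \chi(Y_2)$. Since $\pi_1(Y_i)$ has finite index in $G_i$ and abstract commensurability is an equivalence relation, $\pi_1(Y_1)$ and $\pi_1(Y_2)$ are abstractly commensurable, so I may invoke Proposition \ref{samechi} with $r=4$ and $n=2$: ordering the four surfaces of each $Y_i$ by Euler characteristic, the $j$-th surface of $Y_1$ is homeomorphic to the $j$-th surface of $Y_2$. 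The heart of the argument is then to read the invariants (a)--(c) off this matching. By the construction in Lemma \ref{existcovers}, the two surfaces of $Y_i$ lying over $S_{g_i}$ under $p_i$ form a pair whose Euler characteristics sum to $d_i\,\chi(S_{g_i})$ and whose ratio equals $t(\gamma_i)$ --- they are homeomorphic copies of a cover of $S_{g_i}$ cut along $\gamma_i$ when $\gamma_i$ is non-separating, and are covers of the two complementary subsurfaces when $\gamma_i$ separates, the bisecting case giving equal Euler characteristics in agreement with $t(\gamma_i)=1$ --- and symmetrically for the pair over $S_{g_i'}$. Using the covering maps $p_1,p_2$ together with the homeomorphism supplied by Proposition \ref{samechi} (which, coming from topological rigidity, respects the way the four pieces are assembled along the two singular curves), one matches the pair of $Y_1$ over $S_{g_1}$ with the pair of $Y_2$ over $S_{g_2}$, and likewise for the primed factors, absorbing into the relabeling permitted in the statement any ambiguity that arises when $\chi(S_{g_i})=\chi(S_{g_i'})$. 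Comparing Euler-characteristic multisets then yields $t(\gamma_1)=t(\gamma_2)$ and $t(\gamma_1')=t(\gamma_2')$, which are (b) and (c), together with $d_1\chi(S_{g_1}) = d_2\chi(S_{g_2})$; combining the latter with $d_1\bigl(\chi(S_{g_1})+\chi(S_{g_1'})\bigr) = \chi(Y_1) = \chi(Y_2) = d_2\bigl(\chi(S_{g_2})+\chi(S_{g_2'})\bigr)$ (Lemma \ref{eulerchar}) gives $d_1\chi(S_{g_1'})= d_2\chi(S_{g_2'})$, and hence (a).

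For the ``if'' direction, assume $G_1,G_2$ are presented as in the theorem with (a), (b), (c) holding; I would build a common finite --- in fact regular --- cover of $X_1$ and $X_2$. By (a) the Euler characteristics of the two factors are proportional, so I can choose cover degrees $d_1, d_2$ with $d_1\chi(S_{g_1})=d_2\chi(S_{g_2})$ and $d_1\chi(S_{g_1'})=d_2\chi(S_{g_2'})$. For each factor I would take a cover of that degree, chosen regular, in which the identified curve has a multicurve preimage whose components all have equal degree and whose topological type is the one prescribed by (b) (respectively (c)); this is where topological type enters, since the number and degrees of the preimage components of an essential simple closed curve in a finite cover depend on it, and I would handle a bisecting separating curve by replacing it at the outset with a non-separating curve of the same Euler characteristic so that all cases become uniform. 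Gluing these covers of the two factors along the matching multicurves produces the desired common cover, and the last step is to check that the resulting projections to $X_1$ and $X_2$ are well defined and normal.

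The step I expect to be the main obstacle is the translation in the ``only if'' direction: from the homeomorphism of the prepared covers produced by Lemma \ref{existcovers} and Proposition \ref{samechi}, extracting precisely the ratio of Euler characteristics and the two topological types --- in particular making sense of the correspondence of factors and confirming the coincidence, invisible to these covers, between a separating curve that bisects a surface and a non-separating curve, which is exactly why conditions (b), (c) are phrased through the invariant $t(\cdot)$. On the constructive side, the technical points to pin down are the existence of finite covers of a closed surface realizing a prescribed essential simple closed curve with an equal-degree multicurve preimage of prescribed topological type, and the normality of the glued-up cover.
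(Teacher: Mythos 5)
Your strategy matches the paper's almost exactly: in both directions the engine is the preparatory covers of Lemma \ref{existcovers} together with Proposition \ref{samechi} (and then, for the converse, showing that the two prepared covers are homeomorphic, which is the common cover).  The one place where your sketch is too thin is the translation step you yourself flag as the main obstacle, and I think you slightly mislocate where the difficulty lies.  You say you ``match the pair of $Y_1$ over $S_{g_1}$ with the pair of $Y_2$ over $S_{g_2}$,'' and you characterize the slack as ``relabeling'' to absorb ``ambiguity that arises when $\chi(S_{g_i}) = \chi(S_{g_i'})$.''  But Proposition \ref{samechi} only gives a matching of the four pieces of $Y_1$ to the four pieces of $Y_2$ in order of Euler characteristic; nothing guarantees that this matching sends the two pieces lying over the first factor $S_{g_1}$ of the \emph{given} decomposition of $X_1$ to the two pieces lying over the first factor $S_{g_2}$ of the \emph{given} decomposition of $X_2$.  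The theorem's phrase ``may be expressed as'' is doing much more work than breaking a tie between a factor and its primed partner: it allows you to re-partition the four subsurfaces of $X_i$ into pairs, i.e.\ to pass to a genuinely different amalgam decomposition of $G_i$, and to choose the amalgamating curves $\gamma_i,\gamma_i'$ accordingly.  The paper carries this out by an explicit case analysis on whether the original curves are separating or non-separating (when one is non-separating, the cover construction forces two of the four pieces to be homeomorphic, and that coincidence is exactly what lets the re-pairing be made), and then simply \emph{defines} $S_{g_1} := p_1(S_1) \cup_{\gamma_1} p_1(S_2)$, etc.\ so that the computation of $t(\gamma_i)$ and the Euler-characteristic ratio becomes a direct cancellation of the degrees $d_i$.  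Your ``if'' direction is sound and in the same spirit as the paper's (which reruns Lemma \ref{existcovers} on the assumed decompositions and checks $Y_1 \cong Y_2$ piece by piece), though you should be aware that the device of replacing a bisecting separating curve by a non-separating one is not needed there, since Lemma \ref{existcovers} already normalizes both cases to the same four-piece/two-singular-curve picture.  So: right blueprint, but the case analysis underlying the re-pairing of factors in the forward direction is a real step you would need to supply, not a cosmetic relabeling.
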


\begin{proof} Let $X_1, X_2 \in \mathcal{X}_S$. By Lemma \ref{existcovers}, there exist covering spaces $p_1: Y_1\rightarrow X_1$ and $p_2: Y_2 \rightarrow X_2$ so that $\chi(Y_1)=\chi(Y_2)$,
  \begin{center}
  $\displaystyle Y_1 = \bigcup_{i=1}^4 S_i$ \quad and \quad $\displaystyle Y_2 = \bigcup_{i=1}^4 T_i$; 
 \end{center} 
 the connected surfaces $S_i$ in $Y_1$ have two boundary components, one colored red and one colored blue; all red boundary components are identified and all blue boundary components are identified; and likewise for $Y_2$.
 
Suppose $G_1\cong \pi_1(X_1)$ and $G_2\cong\pi_1(X_2)$ are abstractly commensurable, so $\pi_1(Y_1)$ and $\pi_1(Y_2)$ are abstractly commensurable. By Proposition \ref{samechi}, $S_i \cong T_i$ for $1 \leq i \leq 4$. The conditions of the theorem require a labeling of the surfaces and amalgamated curves in $X_1$ and $X_2$. Thus, it remains to assign $S_{g_i}$, $S_{g_i'}$, $\gamma_i$, and $\gamma_i'$ for $i=1,2$ that satisfy conditions (a), (b), and (c). This assignment depends on whether the original curves $\rho_i$ and $\rho_i'$ are separating or non-separating. Let $p_1:Y_1 \rightarrow X_1$ and $p_2:Y_2 \rightarrow X_2$ be the covering maps constructed above.  

If the curves $\rho_i$ and $\rho_i'$ are separating for $i=1,2$, suppose $\chi(S_i) \leq \chi(S_j)$ for $i\leq j$. Let 
\begin{center}
 $S_{g_1} = p_1( S_1 ) \cup_{\gamma_1} p_1( S_2 )$ \, and \,  $S_{g_1'} = p_1( S_3 ) \cup_{\gamma_1'} p_1( S_4)$
\end{center}
 be the surfaces obtained by identifying $p_1(S_i)$ along their boundary curves and let $\gamma_i$ and $\gamma_i'$ be the images of the boundary curves. Similarly, let 
 \begin{center}
  $S_{g_2} = p_2( T_1) \cup_{\gamma_2} p_2( T_2 )$ \, and \, $S_{g_2'} = p_2( T_3 ) \cup_{\gamma_2'} p_2( T_4 )$. 
 \end{center}
 One can easily check that the conditions of the theorem hold:
 \begin{eqnarray*}
  t(\gamma_1) &=& \frac{\chi(p_1(S_1))}{\chi(p_1(S_2)) } \\
  &=& \frac{ \frac{ \chi(S_1) }{ d_1 }  }{  \frac{ \chi(S_2) }{ d_1 }  }\\ 
  &=& \frac{ \frac{ \chi(S_1) }{ d_2 }  }{  \frac{ \chi(S_2) }{ d_2 }  }\\ 
  &=&  \frac{ \frac{ \chi(T_1) }{ d_2 }  }{  \frac{ \chi(T_2) }{ d_2 }  }\\ 
  &=& \frac{\chi(p_2(T_1))}{\chi(p_2(T_2)) } \\
  &=& t(\gamma_2) ,
 \end{eqnarray*}

 and an analogous calculation shows $t(\gamma_1') = t(\gamma_2')$, proving claims (b) and (c). Similarly, 
 
 \begin{eqnarray*}
  \frac{ \chi(S_{g_1})}{ \chi(S_{g_1'} )} &=& \frac{ \chi(p_1(S_1 \cup S_2)) }{ \chi(p_1(S_3 \cup S_4)) } \\
  &=& \frac{ \frac{ \chi(S_1 \, \cup \, S_2) }{ d_1} }{ \frac{ \chi(S_3\,\cup\, S_4) }{ d_1} } \\
  &=& \frac{ \frac{ \chi(S_1 \, \cup \, S_2) }{ d_2} }{ \frac{ \chi(S_3\,\cup\, S_4) }{ d_2} } \\
   &=& \frac{ \frac{ \chi(T_1 \, \cup \, T_2) }{ d_2} }{ \frac{ \chi(T_3\,\cup\, T_4) }{ d_2} } \\
  &=&  \frac{ \chi(p_2(T_1 \cup T_2)) }{ \chi(p_2(T_3 \cup T_4)) } \\
  &=& \frac{ \chi(S_{g_2})}{ \chi(S_{g_2'} )}, 
 \end{eqnarray*}
 establishing (a) in this case. 
 
 Otherwise, at least one amalgamating curve $\rho_i$ or $\rho_i'$ is non-separating for $i = 1$ or $i= 2$. By the construction of the covers $p_i:Y_i \rightarrow X_i$, this situation implies $S_i \cong S_j$ for some $i \neq j$. Let $k$ and $\ell$ denote the other indices. There are now three cases: among the $S_i$ (and $T_i \cong S_i$) either two, three, or four of these connected surfaces with boundary are homeomorphic. 
 
 If neither $S_k$ nor $S_{\ell}$ is homeomorphic to $S_i$, define 
 \begin{center}
  $S_{g_1} = p_1(S_i) \cup_{\gamma_1} p_1(S_j)$, \\
  $S_{g_1'} = p_1(S_k) \cup_{\gamma_1'} p_1(S_{\ell})$, \\
  $S_{g_2} = p_2(T_i) \cup_{\gamma_2} p_2(T_j)$, \\
  $S_{g_2'} = p_2(T_k) \cup_{\gamma_2'} p_2(T_{\ell})$. \\
 \end{center}
 
 If, without loss of generality, $S_k \cong S_i$ and $S_{\ell} \neq S_i$, let $S_{g_1}$ and $S_{g_2}$ be the surfaces covered by two of $\{S_i, S_j, S_k \}$, and let $S_{g_1'}$ and $S_{g_2'}$ be covered by the remaining two subsurfaces. Let $\gamma_i$ and $\gamma_i'$ be the images of the boundary curves under the covering maps. Finally, if all four surfaces $S_i$ are homeomorphic, define $(S_{g_i}, \gamma_i) = (S_{h_i}, \rho_i)$ and $(S_{g_i'}, \gamma_i') = (S_{h_i'}, \rho_i')$ to be the spaces given by the original labeling. In all three cases, conditions (a), (b), and (c) are verified in a manner similar to that above. 
 
 Suppose now that $G_1$ and $G_2$ are expressed as in the statement of the theorem and that conditions (a), (b), and (c) hold. Let $X_1 = S_{g_1} \cup_{c_1} S_{g_1'}$ and $X_2 = S_{g_2} \cup_{c_2} S_{g_2'}$ be the corresponding spaces where $c_i$ identifies the essential simple closed curves $\gamma_i:S^1 \rightarrow S_{g_i}$ and $\gamma_i':S^1 \rightarrow S_{g_i'}$.  Construct finite covers $p_1:Y_1\rightarrow X_1$ of degree $d_1$ and $p_2:Y_2\rightarrow X_2$ of degree $d_2$ as in Lemma \ref{existcovers}, with $S_{g_i}$, $S_{g_i'}$, $\gamma_i$, and $\gamma_i'$ replacing $S_{h_i}$, $S_{h_i'}$, $\rho_i$, and $\rho_i'$, respectively. We claim that $Y_1$ and $Y_2$ are homeomorphic. Let 
\begin{center}
 $S_1 \cup S_2 = p_1^{-1}(S_{g_1})$, \\
 $S_3 \cup S_4 = p_1^{-1}(S_{g_1'})$, \\
 $T_1 \cup T_2 = p_2^{-1}(S_{g_2})$, \\ 
 $T_3 \cup T_4 = p_2^{-1}(S_{g_2'})$.\\
\end{center}
Suppose $\chi(S_1) \leq \chi(S_2)$, $\chi(S_3) \leq \chi(S_4)$, $\chi(T_1) \leq \chi(T_2)$, and $\chi(T_3) \leq \chi(T_4)$; we use the conditions of the theorem to show $S_i \cong T_i$ for $1 \leq i \leq 4$. Since 
\begin{center}
 $d_1\cdot \chi(S_{g_1}) = \chi(S_1 \cup S_2)$, \\
 $d_1\cdot \chi(S_{g_1'}) = \chi(S_3 \cup S_4)$, \\
 $d_2\cdot \chi(S_{g_2}) = \chi(T_1 \cup T_2)$, \\
 $d_2\cdot \chi(S_{g_2'}) = \chi(T_3 \cup T_4)$, 
\end{center}
by condition (a), 
\begin{eqnarray*}
\frac{ \chi(S_1 \cup S_2)}{\chi(S_3 \cup S_4) } &=& 
 \frac{\chi(S_{g_1})}{\chi(S_{g_1'})}  \\
 &=&   \frac{\chi(S_{g_2})}{\chi(S_{g_2'})} \\
 &=&  \frac{ \chi(T_1 \cup T_2)}{\chi(T_3\cup T_4) }.
\end{eqnarray*}
Since $\chi(Y_1) = \chi(Y_2) = L$, 
\begin{center}
 $\chi(S_1 \cup S_2) + \chi(S_3 \cup S_4) = \chi(T_1 \cup T_2) + \chi(T_3 \cup T_4),   $
\end{center}
hence
\begin{eqnarray}\label{sum}
 \chi(S_1 \cup S_2) = \chi (T_1 \cup T_2 ), 
\end{eqnarray}
\begin{eqnarray*}
 \chi(S_3 \cup S_4) = \chi (T_3 \cup T_4).
\end{eqnarray*}
By condition (b), $t(\gamma_1) = t(\gamma_2)$. If $t(\gamma_i) = 1$, then by construction $\chi(S_1) = \chi(S_2) = \chi(T_1) = \chi(T_2)$. Otherwise, 
\begin{eqnarray*}
\frac{\chi(S_1) }{ \chi(S_2) } &=& t(\gamma_1) \\
&=& t(\gamma_2) \\
&=& \frac{\chi(T_1) }{\chi(T_2) }, 
\end{eqnarray*}
so by equation (\ref{sum}) above (and since Euler characteristic sums over these unions), we have $\chi(S_i) = \chi(T_i)$ for $i=1,2$. By condition (c) and an analogous calculation, we conclude $\chi(S_i) = \chi(T_i) $ for all $1 \leq i \leq 4$. Thus, $Y_1 \cong Y_2$, and therefore $G_1$ and $G_2$ are abstractly commensurable. \end{proof}

\begin{corollary}
 {\it If $G_1, G_2 \in \mathcal{C}_S$ and $G_1$ and $G_2$ are abstractly commensurable, then there exist normal subgroups of finite index, $N_i \triangleleft G_i$ so that $N_1 \cong N_2$. }
\end{corollary}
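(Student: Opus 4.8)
\emph{Proof proposal.} The plan is to upgrade the common cover built in the proof of Theorem~\ref{classification} to a common \emph{regular} cover. This suffices: if a connected space $W$ admits finite regular covering maps $q_i\colon W\to X_i$ for $i=1,2$, set $N_i:=(q_i)_*\pi_1(W)$. Then $N_i$ is a subgroup of $G_i\cong\pi_1(X_i)$ (covering maps are $\pi_1$-injective), it is normal because $q_i$ is regular, it has finite index because $q_i$ is a finite cover, and $N_1\cong\pi_1(W)\cong N_2$. So it is enough to produce such a $W$.

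By Theorem~\ref{classification} we may take $G_i\cong\pi_1(X_i)$ with $X_i=S_{g_i}\cup_{c_i}S_{g_i'}\in\mathcal{X}_S$ in the normal form of the theorem, satisfying (a), (b), (c). The ``if'' direction of Theorem~\ref{classification} produces, via Lemma~\ref{existcovers}, covers $p_i\colon Y_i\to X_i$ of degree $d_i$ that have the symmetric form (four surfaces with two boundary circles each, all red circles identified, all blue circles identified), satisfy $\chi(Y_1)=\chi(Y_2)$, and---by Proposition~\ref{samechi}---are homeomorphic. So it is enough to arrange that the auxiliary choices in Lemma~\ref{existcovers} make each $p_i$ regular, without altering the combinatorial type or the Euler characteristics, so that Proposition~\ref{samechi} still applies verbatim; then $W=Y_1\cong Y_2$ is the desired space.

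Here is the mechanism I would use. The degree-two step $\widetilde{X_i}\to X_i$ is automatically regular, with deck involution $\tau$. Build the $\tfrac{d_i}{2}$-fold cyclic cover of each surface piece of $\widetilde{X_i}$ from a homomorphism to $\Z/\tfrac{d_i}{2}$, choosing the homomorphisms on the various pieces to agree on the edge subgroups carried by the lifts of the amalgamating curve; then they glue to a homomorphism $\pi_1(\widetilde{X_i})\to\Z/\tfrac{d_i}{2}$, so $Y_i\to\widetilde{X_i}$ is regular. For the composite $Y_i\to X_i$ to be regular it is then enough that $\tau$ lift to $Y_i$, i.e.\ that $\tau$ preserve the kernel of that homomorphism; this holds if the non-separating curve used to define the cyclic cover on each surface piece is chosen $\tau$-invariant---for instance, take it to be the connected preimage of a simple closed curve meeting the amalgamating curve once, which is automatically $\tau$-invariant and meets each lift of the amalgamating curve once---since then the defining homomorphism is $\tau$-invariant up to sign. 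When $\tau$ lifts, the deck group of $Y_i\to X_i$ has order $d_i$, hence acts transitively on fibres, so $Y_i\to X_i$ is regular. Analogous equivariant choices handle the cases where the amalgamating curves are separating.

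I expect the crux to be precisely the reconciliation in the previous paragraph: ``regular over $X_i$'' is a condition about the ambient group $G_i$ and is sensitive to conjugation, whereas ``$Y_1\cong Y_2$'' concerns the covers in isolation, so one must check that the equivariance constraints forcing regularity (invariance of the chosen curves under $\tau$, compatibility of the cyclic data across the amalgam, and the corresponding bookkeeping in the separating cases) are all simultaneously satisfiable and leave untouched the data---number of surface pieces, their boundary pattern, total Euler characteristic---on which Proposition~\ref{samechi} relies. Should arranging regularity directly prove unwieldy, a fallback is to replace $p_i$ by the regular cover corresponding to the normal core $\mathrm{Core}_{G_i}(\pi_1(Y_i))$, which is automatically regular of finite index; one is then left to verify $\mathrm{Core}_{G_1}(\pi_1(Y_1))\cong\mathrm{Core}_{G_2}(\pi_1(Y_2))$, which again reduces to carrying out the two core constructions compatibly.
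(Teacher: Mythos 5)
Your main plan is exactly the paper's: the proof of the corollary in the paper is the single sentence that the covers $p_i\colon Y_i\to X_i$ constructed in Theorem~\ref{classification} are regular, after which one takes $N_i=(p_i)_*\pi_1(Y_i)$. You are right, and more careful than the paper, in treating regularity as something that needs to be arranged rather than something automatic: the degree-two cover $\widetilde{X_i}\to X_i$ is regular and the cyclic cover $Y_i\to\widetilde{X_i}$ is regular, but a composite of regular covers need not be regular, so one does have to check that the deck involution $\tau$ lifts --- your $\tau$-invariant choice of the cutting curve (the connected preimage of a curve meeting the amalgamating curve once, in the non-separating case) is the right mechanism, and the compatibility of the cyclic data across the two sides of the amalgam and the parallel argument in the separating case are, as you say, where the bookkeeping lives.

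The fallback, however, does not obviously close the loop and should be dropped or repaired. The normal cores $\mathrm{Core}_{G_1}(\pi_1(Y_1))$ and $\mathrm{Core}_{G_2}(\pi_1(Y_2))$ depend on the embeddings $\pi_1(Y_i)\hookrightarrow G_i$, not merely on the abstract isomorphism type of $Y_i$; a homeomorphism $Y_1\cong Y_2$ provides no map relating $G_1$ and $G_2$ under which the two cores would be carried to each other, so "verify the cores are isomorphic" is not a simplification of the original statement but essentially a restatement of it. The main approach is the one that works, and it is also the paper's.
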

\begin{proof}
 In the proof of Theorem \ref{classification}, the covers constructed are regular.
\end{proof}

\begin{figure}[t]
   \includegraphics[height=7.5cm]{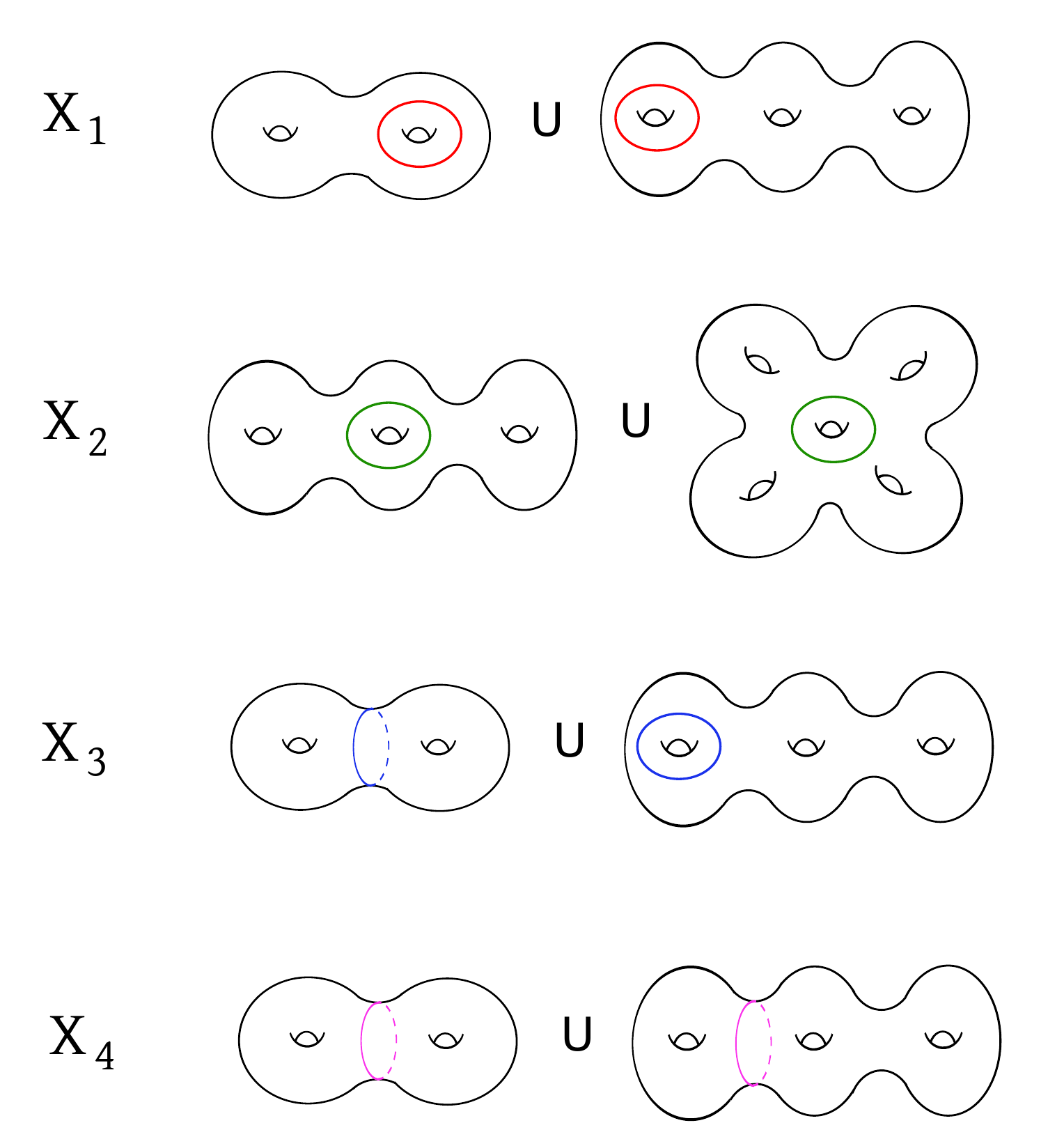}
   \caption[An example of spaces in $\mathcal{X}_S$ with abstractly commensurable fundamental groups ]{ {\small  {\it Example:} The groups $\pi_1(X_1)$, $\pi_1(X_2)$, and $\pi_1(X_3)$ are abstractly commensurable, but are not abstractly commensurable with $\pi_1(X_4)$. All four groups are quasi-isometric by Theorem \ref{bclassification}. } }
  \label{}
 \end{figure}

 In the case that $G_1$ and $G_2$ are the fundamental groups of surfaces glued along separating curves, we have the following.  
 
 \begin{corollary} \label{justsep}
  {\it If $S_1, S_2, S_3, S_4$ and $T_1, T_2, T_3, T_4$ are orientable surfaces of genus greater than or equal to one and with one boundary component, the $S_i$ are glued along their boundary to form $X_1$, and the $T_i$ are glued along their boundary to form $X_2$, then $\pi_1(X_1)$ and $\pi_1(X_2)$ are abstractly commensurable if and only if, up to reindexing, the quadruples $(\chi(S_1), \ldots, \chi(S_4))$ and $(\chi(T_1), \ldots, \chi(T_4))$ are equal up to integer scale. 
  }
 \end{corollary}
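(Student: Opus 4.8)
The plan is to deduce the corollary from Theorem \ref{classification} by translating conditions (a)--(c) into arithmetic of Euler characteristics. I will use two elementary facts repeatedly: if a closed surface $\Sigma$ is cut along a separating essential simple closed curve $\gamma$ into subsurfaces $A$ and $B$, each with one boundary component, labelled so $\chi(A)\leq\chi(B)$, then $t(\gamma)=\chi(A)/\chi(B)$ by Definition \ref{toptype}, and $\chi(\Sigma)=\chi(A)+\chi(B)$ because gluing two surfaces along a circle is additive on Euler characteristic.

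First I would set up a dictionary between the spaces in the statement and amalgam decompositions. Given $X_1$ built from $S_1,\dots,S_4$ (each of genus $\geq 1$ with one boundary component) glued along a common circle, every partition of $\{S_1,\dots,S_4\}$ into two pairs $\{S_i,S_j\}$ and $\{S_k,S_\ell\}$ realizes $X_1$ as a space $\Sigma\cup\Sigma'$ with $\Sigma=S_i\cup S_j$ and $\Sigma'=S_k\cup S_\ell$ closed orientable surfaces whose genus is the sum of the two genera, hence $\geq 2$, the identifying curve being essential in each factor since it bounds the non-disk $S_i$ (respectively $S_k$) on one side; hence $X_1\in\mathcal{X}_S$ and $\pi_1(X_1)\cong\pi_1(\Sigma)*_{\Z}\pi_1(\Sigma')\in\mathcal{C}_S$. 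Conversely, I would show that \emph{every} amalgam decomposition of $\pi_1(X_1)$ into closed hyperbolic surface groups over a subgroup carried by an essential simple closed curve in each factor arises from such a partition and has \emph{both} curves separating: by topological rigidity (Theorem \ref{toprigidity}) such a decomposition forces $X_1$ to be homeomorphic to the corresponding glued space $\Sigma\cup\Sigma'$; any homeomorphism carries singular locus to singular locus and so induces a homeomorphism of the spaces obtained by cutting along the singular circle; cutting $X_1$ this way yields four surfaces with one boundary component each, whereas cutting $\Sigma\cup\Sigma'$ yields two, three, or four pieces according as the identifying curve is non-separating in both, in exactly one, or in neither of $\Sigma,\Sigma'$ --- and only the last case gives four one-boundary pieces, which then match the $S_i$ after reindexing. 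The identical statements hold for $X_2$.

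Then, for the forward direction, I would argue: if $\pi_1(X_1)$ and $\pi_1(X_2)$ are abstractly commensurable, Theorem \ref{classification} supplies amalgam expressions $\pi_1(X_i)\cong\pi_1(S_{g_i})*_{\Z}\pi_1(S_{g_i'})$, with identifying curves $\gamma_i$ and $\gamma_i'$, satisfying (a), (b), (c). By the dictionary, after reindexing the $S$'s and the $T$'s I may take $S_{g_1}=S_1\cup_{\gamma_1}S_2$, $S_{g_1'}=S_3\cup_{\gamma_1'}S_4$, $S_{g_2}=T_1\cup_{\gamma_2}T_2$, $S_{g_2'}=T_3\cup_{\gamma_2'}T_4$ with all four identifying curves separating and, after reordering within each pair, $\chi(S_1)\leq\chi(S_2)$, $\chi(S_3)\leq\chi(S_4)$, $\chi(T_1)\leq\chi(T_2)$, $\chi(T_3)\leq\chi(T_4)$. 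Then (b) reads $\chi(S_1)/\chi(S_2)=\chi(T_1)/\chi(T_2)$, so $(\chi(T_1),\chi(T_2))=\lambda\,(\chi(S_1),\chi(S_2))$ for a positive rational $\lambda$; likewise (c) gives $(\chi(T_3),\chi(T_4))=\mu\,(\chi(S_3),\chi(S_4))$; and (a), after replacing each $\chi(S_{g_i})$ by the sum over its pair, reads $\frac{\chi(S_1)+\chi(S_2)}{\chi(S_3)+\chi(S_4)}=\frac{\lambda(\chi(S_1)+\chi(S_2))}{\mu(\chi(S_3)+\chi(S_4))}$, forcing $\lambda=\mu$. Hence $(\chi(T_1),\dots,\chi(T_4))=\lambda\,(\chi(S_1),\dots,\chi(S_4))$ up to reindexing, and clearing the denominator of $\lambda$ exhibits the quadruples as equal up to integer scale. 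For the converse I would reverse this: given $(\chi(T_i))=\lambda\,(\chi(S_i))$ after reindexing, choose the partition $\{S_1,S_2\}\sqcup\{S_3,S_4\}$ of $X_1$ and the matching partition of $X_2$, form the amalgam expressions via the dictionary, and observe that the scalar $\lambda$ cancels in each of $t(\gamma_1)$, $t(\gamma_1')$, and $\chi(S_{g_1})/\chi(S_{g_1'})$, so that (a), (b), (c) hold and Theorem \ref{classification} yields abstract commensurability.

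The step I expect to be the main obstacle --- or at least the one needing genuine care --- is the dictionary, specifically ruling out that the decompositions produced by Theorem \ref{classification} involve a non-separating identifying curve (the ``splits exactly in half'' phenomenon in the full theorem). The clean way to handle this is the singular-locus comparison via topological rigidity described above; everything downstream is routine bookkeeping with the relation $t(\gamma)=\chi(A)/\chi(B)$.
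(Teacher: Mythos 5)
Your proof is correct. The paper leaves this corollary without an explicit argument, presenting it as a direct specialization of Theorem \ref{classification}, and your write-up supplies what must actually be checked. In particular, your ``dictionary'' step is the genuine subtlety: Theorem \ref{classification} only asserts the \emph{existence} of amalgam expressions of $\pi_1(X_1)$ and $\pi_1(X_2)$ satisfying (a)--(c), so to read off a statement about the given pieces $S_i, T_i$ one must first know that any such expression corresponds to a partition of the four pieces into two pairs, with both amalgamating curves separating. Your argument via Lafont's rigidity (Theorem \ref{toprigidity}) --- singular locus is a topological invariant, and cutting along it must produce four one-boundary pieces --- handles this correctly; the remaining arithmetic showing that (a)--(c) for a partition are equivalent to the quadruples being equal up to positive rational (hence integer) scale is straightforward and correctly executed.

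One alternative worth noting, closer in spirit to the machinery the paper uses \emph{inside} the proof of Theorem \ref{classification}: apply Lemma \ref{boundary} to produce, for each $S_i$, a $d_1$-fold cover with a single boundary circle (with $d_1 = L/|\chi(X_1)|$ and $L = \mathrm{lcm}(|\chi(X_1)|,|\chi(X_2)|)$), glue these to get a degree-$d_1$ cover $Y_1 \to X_1$ consisting of four one-boundary surfaces sharing a single singular circle, do the same for $X_2$, and then apply Proposition \ref{samechi} (with $r=4$, $n=1$) directly to $Y_1, Y_2$. That route bypasses the theorem's statement entirely and avoids the need for the dictionary; yours goes through the theorem as a black box and must therefore pin down the amalgam decomposition afterwards. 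Both are valid; yours is more faithful to ``corollary of the theorem,'' at the cost of the extra rigidity argument.
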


  If $G_1$ and $G_2$ are the fundamental groups of surfaces glued along non-separating curves, we have the following.  
 
 \begin{corollary} \label{justnonsep}
  {\it If $S_{g_i}$ and $S_{g_i'}$ are orientable surfaces of genus greater than one identified to each other along a non-separating curve in each to form the space $X_i$ for $i=1,2$, then $\pi_1(X_1)$ and $\pi_1(X_2)$ are abstractly commensurable if and only if, up to reindexing, $\displaystyle \frac{\chi(S_{g_1})}{\chi(S_{g_1'})} = \frac{\chi(S_{g_2})}{\chi(S_{g_2'})}$.
  }
 \end{corollary}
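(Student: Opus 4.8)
The plan is to derive this as a direct specialization of Theorem \ref{classification}. Write $X_i = S_{g_i} \cup_{c_i} S_{g_i'}$ for $i=1,2$, where $c_i$ identifies the non-separating essential simple closed curves $\gamma_i : S^1 \to S_{g_i}$ and $\gamma_i' : S^1 \to S_{g_i'}$, so that $G_i = \pi_1(X_i) \cong \pi_1(S_{g_i}) *_{\la a_i \ra} \pi_1(S_{g_i'})$ with the amalgam given by $a_i \mapsto [\gamma_i]$ and $a_i \mapsto [\gamma_i']$.

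The key point is that, since all four curves $\gamma_1, \gamma_1', \gamma_2, \gamma_2'$ are non-separating, Definition \ref{toptype} gives $t(\gamma_1) = t(\gamma_1') = t(\gamma_2) = t(\gamma_2') = 1$. Therefore conditions (b) and (c) of Theorem \ref{classification} are automatically satisfied, and the theorem reduces in this setting to condition (a) alone. For the ``if'' direction I would assume, after reindexing, that $\chi(S_{g_1})/\chi(S_{g_1'}) = \chi(S_{g_2})/\chi(S_{g_2'})$; then (a), (b), and (c) all hold for the given amalgam decompositions, so Theorem \ref{classification} gives that $G_1$ and $G_2$ are abstractly commensurable.

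For the ``only if'' direction I would assume $G_1$ and $G_2$ are abstractly commensurable and apply Theorem \ref{classification} to obtain, up to relabeling, decompositions $G_i \cong \pi_1(\Sigma_i) *_{\la b_i \ra} \pi_1(\Sigma_i')$ for which (a), (b), and (c) hold. The one step that needs attention is checking that the unordered pair $\{\Sigma_i, \Sigma_i'\}$ agrees with $\{S_{g_i}, S_{g_i'}\}$; this holds because, when the identifying curves are non-separating, $\pi_1(X_i)$ admits a unique expression --- up to interchanging the two factors --- as an amalgam of hyperbolic surface groups over $\Z$, as recorded in Section 2. Granting this, condition (a) for the decomposition into $\Sigma_i, \Sigma_i'$ is exactly the claimed equality $\chi(S_{g_1})/\chi(S_{g_1'}) = \chi(S_{g_2})/\chi(S_{g_2'})$ up to reindexing.

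Since Theorem \ref{classification} does all the substantive work, there is no real obstacle here; the only delicate point is the uniqueness-of-splitting remark in the ``only if'' direction. Should one wish to avoid it, one can instead re-run the relevant case of the proof of Theorem \ref{classification}: build the covers $p_i : Y_i \to X_i$ of Lemma \ref{existcovers}, observe that when every identifying curve is non-separating the four subsurfaces of each $Y_i$ occur in homeomorphic pairs whose Euler characteristics are determined by $\chi(S_{g_i})/\chi(S_{g_i'})$, and conclude using Proposition \ref{samechi} and Corollary \ref{top}.
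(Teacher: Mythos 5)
Your proof is correct and takes the approach the paper intends: Corollary \ref{justnonsep} is a direct specialization of Theorem \ref{classification}, with conditions (b) and (c) becoming vacuous because every non-separating curve has topological type one. One small caution about the attribution in your ``only if'' direction: Section 2 of the paper explicitly records only that when both amalgamating curves are \emph{separating} there may be up to three expressions as an amalgam of surface groups; it does not state the uniqueness claim for the non-separating case. That uniqueness is nevertheless true, and the clean justification is via topological rigidity (Theorem \ref{toprigidity}): the singular curve of $X_i$ is the unique non-manifold locus and hence is preserved by any homeomorphism, and when both $\gamma_i$ and $\gamma_i'$ are non-separating, cutting $X_i$ along the singular curve produces exactly two connected pieces, each with two boundary circles, so the only way to reassemble them into two closed surfaces glued along one curve is the given decomposition (up to swapping the factors). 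With that justification supplied, or with your alternative of re-running Lemma \ref{existcovers} and Proposition \ref{samechi} directly, the argument is complete.
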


\section{Quasi-isometry classification within $\mathcal{C}_S$}

Let $G$ be a group in the class $\mathcal{C}_S$ so that $G \cong \pi_1(X)$, where $X$ is a space in the class $\mathcal{X}_S$. Suppose $X=S_g \cup_{\gamma} S_h $ where $S_g$ and $S_h$ are closed orientable surfaces of negative Euler characteristic and $\gamma$ denotes the image of the essential simple closed curves $\gamma_g:S^1 \rightarrow S_g$ identified to $\gamma_h:S^1 \rightarrow S_h$ in $X$. There are many metrics on $X$ through which the geometry of the group $G$ may be studied. 

\subsection{A CAT$(-1)$ metric on $\widetilde{X}$}

Let $M_{\kappa}^n$ denote the complete, simply connected, Riemannian $n$-manifold of constant sectional curvature $\kappa \in \R$. As described in \cite[Chapter I.2]{bridsonhaefliger}, depending on whether $\kappa$ is positive, negative, or zero, $M_{\kappa}^n$ can be obtained from one of $\mathbb{S}^n$, $\Hy^n$, or $\mathbb{E}^n$, respectively, by scaling the metric. 

\begin{defn} [see Chapter II.1 of \cite{bridsonhaefliger}]
 Let $\Delta(p,q,r)$ be a geodesic triangle in a metric space $X$, which consists of three vertices $p$, $q$, and $r$, and three geodesic segments $[p,q]$, $[q,r]$, and $[r,p]$. A triangle $\bar{\Delta}(\bar{p}, \bar{q}, \bar{r}) \subset M_{\kappa}^2$ is called a {\it comparison triangle} for $\Delta(p,q,r)$ if $d(\bar{p}, \bar{q}) = d(p,q)$, $d(\bar{q}, \bar{r}) = d(q,r)$, and $d(\bar{r}, \bar{p}) = d(r,p)$.  A point $\bar{x} \in [\bar{q}, \bar{r}]$ is called a {\it comparison point} for $x \in [q,r]$ if $d(q,x) = d(\bar{q}, \bar{x})$. 
\end{defn}

\begin{defn} [see Definition II.1.1 of \cite{bridsonhaefliger}]  Let $X$ be a metric space and let $\kappa \in \R$. Let $\Delta$ be a geodesic triangle in $X$ with perimeter less than twice the diameter of $M_{\kappa}^2$. Let $\bar{\Delta} \subset M_{\kappa}^2$ be a comparison triangle for $\Delta$. Then $\Delta$ satisfies the {\it CAT$(\kappa)$ inequality} if for all $x,y \in \Delta$ and comparison points $\bar{x}, \bar{y} \in \bar{\Delta}$, $d(x,y) \leq d(\bar{x}, \bar{y})$. If $\kappa \leq 0$, then $X$ is called a {\it CAT$(\kappa)$ space} if $X$ is a geodesic space all of whose triangles satisfy the CAT$(\kappa)$ inequality. 
\end{defn}

In \cite{malone}, Malone proves all groups in $\mathcal{C}_S$ are quasi-isometric by examining a CAT$(0)$ geometry on $X$ and applying the techniques of Behrstock--Neumann on the bilipschitz equivalence of fattened trees \cite{behrstockneumann}. The bilipschitz equivalence constructed by Behrstock--Neumann relies on the Euclidean structure of fattened trees; their map is piecewise-linear.  In this paper, we study a CAT$(-1)$ metric on $X$ that is piecewise hyperbolic, and we define a bilipschitz equivalence with respect to this hyperbolic structure. The piecewise hyperbolic metric on $X \in \mathcal{X}_S$ can be constructed as follows. 





 One can choose hyperbolic metrics on $S_g$ and $S_h$ so that the length of the geodesic representatives of $[\gamma_g]$ and $[\gamma_h]$ is equal (see Chapter 10 of \cite{farbmargalit}). Gluing by an isometry yields a piecewise hyperbolic complex $X$. We call such a metric {\it hyperbolic on each surface.} The universal cover $\widetilde{X}$ consists of copies of $\Hy^2$ that are the lifts of the hyperbolic surfaces, identified along geodesic lines that are the lifts of the curve $\gamma$. The following proposition implies that $\widetilde{X}$ is a CAT$(-1)$ metric space. 
 
\begin{prop} \cite[Proposition II.11.6]{bridsonhaefliger}
 {\it Let $X_1$ and $X_2$ be metric spaces of curvature $\leq \kappa$ and let $A_1 \subset X_1$ and $A_2 \subset X_2$ be closed subspaces that are locally convex and complete. If $j:A_1 \rightarrow A_2$ is a bijective local isometry, then the quotient of the disjoint union $X = X_1 \bigsqcup X_2$ by the equivalence relation generated by $[a_1 \sim j(a_1)$ for all $a_1 \in A_1]$ has curvature $\leq \kappa$.  }
\end{prop}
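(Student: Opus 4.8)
The statement is Proposition II.11.6 of \cite{bridsonhaefliger}, so the plan is to reconstruct its proof. Since having curvature $\leq \kappa$ means being locally CAT$(\kappa)$, it suffices to show every point of $X = (X_1 \sqcup X_2)/(a \sim j(a))$ has a neighbourhood that is a CAT$(\kappa)$ space. Let $A$ denote the image of $A_1$ (equivalently $A_2$) in $X$; because $A_1$ and $A_2$ are closed, $X = X_1 \cup X_2$ with $X_1 \cap X_2 = A$, the quotient length metric restricts to the original metric on each $X_i$ on small balls, and $A$ is locally convex in $X$ (this is the content of the Basic Gluing Lemma, \cite[II.11.3]{bridsonhaefliger}). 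For $p \notin A$, the point $p$ lies in the interior of some $X_i$, which has curvature $\leq \kappa$, and a small enough ball about $p$ in $X_i$ equals the corresponding ball in $X$; so there is nothing to prove there, and the work is concentrated at points $a \in A$.

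Fix $a \in A$. Using that each $X_i$ has curvature $\leq \kappa$ and that $A$ is locally convex and complete, I would choose $\epsilon > 0$ small enough that, for $i = 1, 2$, the closed ball $\overline{B}_{X_i}(a, 2\epsilon)$ is a convex CAT$(\kappa)$ subspace of $X_i$, the intersection $A \cap \overline{B}_{X_i}(a, 2\epsilon)$ is convex in it and is a complete geodesic space (here completeness of $A_i$ is used), and $2\epsilon < \frac{1}{2}\,\mathrm{diam}(M^2_\kappa)$. The two geodesic facts I then want inside $B := B_X(a,\epsilon)$ are: (i) a geodesic between two points of $X_1 \cap B$ stays in $X_1$ (an excursion into $X_2$ would enter and leave $A$ at points $x, x'$ with $d_{X_2}(x,x') = d_A(x,x') = d_{X_1}(x,x')$ in this ball, so replacing the excursion by the $X_1$-geodesic $[x,x']$ gives a path of the same length, contradicting uniqueness of geodesics in the CAT$(\kappa)$ ball), and symmetrically for $X_2$; and (ii) a geodesic from $p \in (X_1 \setminus A)\cap B$ to $r \in (X_2 \setminus A)\cap B$ must meet $A$ (as $X_1, X_2$ are closed and cover $X$), and by (i) and local convexity of $A$ it meets $A$ in a single point (or subsegment) $x$, so it is the concatenation of an $X_1$-geodesic $[p,x]$ and an $X_2$-geodesic $[x,r]$.

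Now let $\Delta = \Delta(p,q,r)$ be a geodesic triangle contained in a ball of radius $\epsilon/3$ about $a$. If its three vertices lie in one $X_i$ then, by (i), $\Delta \subset X_i$, which is CAT$(\kappa)$, and we are done. Otherwise, after relabelling, $p, q \in X_1$ and $r \in X_2$; by (ii) the side $[p,r]$ meets $A$ at a point $x$ and $[q,r]$ meets $A$ at a point $y$, and the geodesic $[x,y]$ (which lies in $A$, hence in both $X_1$ and $X_2$) together with $[p,y] \subset X_1$ subdivides the configuration into three geodesic triangles: $\Delta(p,q,y)$ and $\Delta(p,y,x)$ lying in $X_1$, and $\Delta(x,y,r)$ lying in $X_2$. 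Each satisfies the CAT$(\kappa)$ inequality. I would then run the standard assembly: form the figure in $M^2_\kappa$ obtained by gluing the comparison triangles of these three pieces along their shared edges $\overline{[x,y]}$ and $\overline{[p,y]}$, use the sub-triangle comparison inequalities to bound distances in $X$ between points of $\Delta$ above by the corresponding distances in this figure, and apply Alexandrov's Lemma \cite[I.2.16]{bridsonhaefliger} twice (at the interior vertices $y$ and then $x$, where the relevant angle sums are $\geq \pi$) to flatten the figure onto the genuine comparison triangle $\overline{\Delta}(p,q,r)$, concluding $d(u,v) \le d(\bar u, \bar v)$ for $u, v$ on the sides of $\Delta$. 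Hence $B_X(a, \epsilon/3)$ is CAT$(\kappa)$.

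I expect the main obstacle to be Steps (i)--(ii) and the triangle subdivision: establishing rigorously that geodesics between same-side points do not cross over, that a mixed geodesic crosses $A$ exactly once, and that the auxiliary geodesics genuinely cut $\Delta$ into consecutive sub-triangles with the correct cyclic order of edges at $x$ and $y$ --- this is precisely where all the hypotheses (local convexity, completeness of $A_i$, $j$ a local isometry, smallness of the ball) are consumed --- followed by the angle bookkeeping needed to apply Alexandrov's Lemma. I note that in the application in this paper $X_i$ is a copy of $\Hy^2$ or a hyperbolic surface and $A_i$ is a complete geodesic line or closed geodesic, hence globally convex, so one may instead quote Reshetnyak's Gluing Theorem directly; the argument above is what is needed for the general statement as quoted.
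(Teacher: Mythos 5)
The paper does not prove this proposition; it is quoted from Bridson--Haefliger and applied as a black box (to deduce, together with Cartan--Hadamard, that universal covers of spaces in $\mathcal{X}_S$ are CAT$(-1)$). Your reconstruction of the cited proof follows the standard route --- localize at a point of $A$, use local convexity to get genuine convexity in a small ball, show that geodesics respect the decomposition, subdivide a geodesic triangle at its crossings of $A$, and assemble comparison figures via Alexandrov's Lemma --- and it correctly identifies the auxiliary results it rests on (the Basic Gluing Lemma II.11.3 and Alexandrov's Lemma I.2.16). Two cautions on the details you flag as the ``main obstacle.'' First, in step~(i) the contradiction should be with uniqueness of geodesics in the small ball of $X_2$, not of $X$ (the latter is what you are trying to prove): an excursion into $X_2\setminus A$ between entry/exit points $x,x'\in A$ is, by the Basic Gluing Lemma, a geodesic segment of $X_2$; the $X_2$-geodesic $[x,x']$ lies in $A$ by convexity of $A$ in the small ball; by uniqueness of geodesics in the CAT$(\kappa)$ ball of $X_2$, the excursion therefore lies in $A$, contradicting that it meets $X_2\setminus A$. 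Second, your closing remark --- that for the paper's application one may invoke Reshetnyak's Gluing Theorem directly because $A_i$ is globally convex --- is not quite right: the paper applies the proposition to the compact quotient $S_g\cup_\gamma S_h$, where $A$ is a closed geodesic in a closed hyperbolic surface and hence only locally convex; global convexity of the gluing locus holds only after passing to the universal cover, where one glues infinitely many copies of $\Hy^2$ in a tree pattern, not two spaces. So the local version II.11.6 is precisely the statement the paper needs.
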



For details on metric gluing constructions, see the work of Bridson--Haefliger (\cite{bridsonhaefliger}, Section II.11).
 

\subsection{Bilipschitz maps and polygonal tilings}

The bilipschitz equivalence between the universal covers of two spaces $X_1$ and $X_2$ in $\mathcal{X}_S$ is constructed by realizing $\widetilde{X}_1$ and $\widetilde{X}_2$ as isomorphic cell complexes with finitely many isometry types of hyperbolic polygons as cells. We will use the following definitions.

\begin{defn}
  A map $f:(X,d_X) \rightarrow (Y,d_Y)$ is {\it $K$-bilipschitz} if there exists $K \geq 1$ so that for all $x_1, x_2 \in X$, \[\frac{1}{K}d_X(x_1,x_2) \leq d_Y(f(x_1), f(x_2)) \leq Kd_X(x_1,x_2),\] and $f$ is a {\it $K$-bilipschitz equivalence} if, in addition, $f$ is a homeomorphism. A map is said to be a {\it bilipschitz equivalence} if it is a $K$-bilipschitz equivalence for some $K$. Two spaces $X$ and $Y$ are {\it bilipschitz equivalent} if there exists a bilipschitz equivalence from $X$ to $Y$.
\end{defn}

\begin{example}
  The map $f:[0,D] \rightarrow [0,D']$ given by $x \mapsto \frac{D'}{D}x$ is called {\it dilation}, and is a bilipschitz equivalence with bilipschitz constant $\frac{D'}{D}$. 
\end{example}

\begin{defn} 
  A {\it convex hyperbolic polygon} is the convex hull of a finite set of points in the hyperbolic plane. 
\end{defn}

\begin{lemma} \label{triangles} 
  {\it Let $\Delta_1, \Delta_2 \subset \Hy^2$ be hyperbolic triangles. Then there exists a bilipschitz equivalence $\phi:\Delta_1 \rightarrow \Delta_2$ that is dilation when restricted to each edge of $\Delta_1$.  } 
\end{lemma}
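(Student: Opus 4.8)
The plan is to prove Lemma~\ref{triangles} by first reducing to a normalized situation and then writing down an explicit barycentric-type map. First I would fix a hyperbolic triangle $\Delta$ and a distinguished interior point, say the incenter or the centroid $o(\Delta)$; this subdivides $\Delta$ into three sub-triangles, each spanned by $o(\Delta)$ and one edge of $\Delta$. Parametrize $\Delta$ by a pair of coordinates: for a point $x \in \Delta$, record which sub-triangle it lies in, the arc-length position of the radial projection of $x$ onto the corresponding edge (so that each edge is measured by its own length), and the fraction $t \in [0,1]$ of the way along the geodesic from $o(\Delta)$ to that boundary point. In other words, use geodesic polar-type coordinates centered at $o(\Delta)$. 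The map $\phi: \Delta_1 \to \Delta_2$ is then defined to send the point with coordinates (edge $i$, arc-length parameter $s$, radial fraction $t$) in $\Delta_1$ to the point with coordinates (edge $i$, dilated arc-length parameter $\tfrac{\ell_i(\Delta_2)}{\ell_i(\Delta_1)} s$, radial fraction $t$) in $\Delta_2$, after matching up the vertices of $\Delta_1$ with the vertices of $\Delta_2$. By construction this is a homeomorphism, it carries vertex to vertex and edge to edge, and its restriction to each edge is exactly dilation, which is what the statement about edges requires.

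The remaining work is to check that $\phi$ is bilipschitz. Here I would argue as follows. The coordinate change from $(s,t)$-coordinates to the hyperbolic metric on each closed sub-triangle is smooth on the interior and extends continuously to the closure, with a nonvanishing (hence bounded above and below) Jacobian away from the cone point $o(\Delta)$ and the three vertices; near those finitely many points one checks directly, using the comparison with Euclidean polar coordinates and the fact that the hyperbolic metric is asymptotically Euclidean at small scales, that the map is still bilipschitz (the $t$-coordinate ``radius'' behaves like Euclidean distance up to a bounded multiplicative factor). Since $\phi$ in these coordinates is linear in $s$ with a fixed positive slope and the identity in $t$, it is bilipschitz as a map between the coordinate boxes; composing with the two bilipschitz coordinate charts gives that $\phi$ is bilipschitz on each of the three closed sub-triangles. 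Finally, a map that is bilipschitz on each piece of a finite cover of $\Delta_1$ by closed sets meeting along geodesic segments, and that agrees on overlaps, is globally bilipschitz: for any two points one joins them by a path crossing finitely many pieces and uses the triangle inequality, or one invokes the standard local-to-global principle for bilipschitz maps on a space that is a finite union of convex (hence geodesically convex) closed subsets.

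I expect the main obstacle to be the behavior of $\phi$ at the cone point $o(\Delta)$ and at the three vertices, where the polar coordinates degenerate; one must verify that the distortion stays bounded there rather than blowing up. A clean way to handle this is to choose $o(\Delta)$ to be the incenter, drop perpendiculars to the three sides, and work in Fermi (normal) coordinates along each side instead of polar coordinates, so that the coordinate map is a genuine diffeomorphism on a neighborhood of each side with uniformly bounded derivatives; the only truly singular points are then the three vertices of $\Delta$, where an angle-comparison argument (the hyperbolic angle and the model Euclidean angle differ by a bounded ratio on a small triangle) suffices. Alternatively, since we are completely free to choose the subdivision, one can first cut each $\Delta_i$ into a bounded number of right-angled or even ideal-free pieces on which explicit bilipschitz models are classical, and then glue. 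Either way the estimate is local and at finitely many points, so it does not affect the global bilipschitz conclusion.
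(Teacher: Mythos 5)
Your construction is a sound alternative to the paper's argument, but the two approaches are genuinely different. The paper proves this lemma in two lines by citing Belenkii--Burago (their Lemmas~5 and~6), which produce a bilipschitz equivalence between a hyperbolic triangle and its Euclidean comparison triangle that restricts to an isometry on each edge; postcomposing with the affine map between the two Euclidean comparison triangles (which restricts to arclength dilation on each edge, since it matches vertices) then finishes the proof. Your argument instead builds the map directly as a ``cone map'' in geodesic polar coordinates from an interior basepoint, dilating in the arclength parameter and fixing the radial fraction. The payoff of your version is that it is self-contained and makes the edge-dilation property transparent by construction rather than by composition; what it costs you is the verification of bilipschitzness near the cone point $o(\Delta)$ (where the polar coordinates degenerate) and near the vertices. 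Those verifications are feasible --- near $o$ the hyperbolic metric is asymptotically $dr^2 + r^2\,d\theta^2$ and the map has the form $(r,\theta)\mapsto(c(\theta)r,\theta'(\theta))$ with $c>0$ and $d\theta'/d\theta$ bounded away from $0$ and $\infty$ by compactness, so it is bilipschitz there; near the vertices the polar chart is a genuine diffeomorphism, so there is nothing singular --- but you correctly flag them as the places where the argument must be made rigorous, and a fully careful write-up would be considerably longer than the paper's citation-based proof. Your instinct that the only genuinely delicate point is the cone point (not the three vertices, where the coordinate Jacobian does not degenerate) is right, so your fallback to Fermi coordinates along the sides, while workable, is not strictly necessary.
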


\begin{proof}
 It follows from \cite[Lemma 5, Lemma 6]{belenkiiburago} that there is a bilipschitz equivalence between a hyperbolic triangle and its Euclidean comparison triangle that restricts to an isometry on each of the edges. Then, composing with a linear map between Euclidean triangles gives the desired result. 
\end{proof}

\begin{corollary} \label{cells}
  {\it If $P$ and $Q$ are convex hyperbolic $n$-gons, then there exists a bilipschitz equivalence $\phi:P \rightarrow Q$ that is dilation when restricted to each edge of $P$.}
\end{corollary}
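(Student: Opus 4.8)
The plan is to triangulate $P$ and $Q$ in a compatible way, apply Lemma \ref{triangles} to each pair of corresponding triangles, and glue the resulting maps. Label the vertices of $P$ cyclically as $v_1, \dots, v_n$ and those of $Q$ cyclically as $w_1, \dots, w_n$. Since $P$ is convex, the diagonals $[v_1,v_3], [v_1,v_4], \dots, [v_1,v_{n-1}]$ decompose $P$ into $n-2$ triangles $T_k$ with vertex set $\{v_1, v_{k+1}, v_{k+2}\}$, and likewise the diagonals from $w_1$ decompose $Q$ into triangles $T_k'$ with vertex set $\{w_1, w_{k+1}, w_{k+2}\}$. For each $k$ apply Lemma \ref{triangles} to obtain a bilipschitz equivalence $\phi_k : T_k \to T_k'$ that is dilation on each edge. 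Inspecting its proof (a Belenkii--Burago map to the Euclidean comparison triangle, a linear map between Euclidean comparison triangles, and the inverse of a Belenkii--Burago map), one may choose $\phi_k$ to realize the vertex correspondence $v_1 \mapsto w_1$, $v_{k+1}\mapsto w_{k+1}$, $v_{k+2}\mapsto w_{k+2}$: the linear map between Euclidean triangles can be taken to match vertices in any prescribed way, and any such map still restricts to a dilation on each edge.

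With this normalization the pieces agree on overlaps. Triangles $T_k$ and $T_{k+1}$ meet exactly along the diagonal $[v_1, v_{k+2}]$, and both $\phi_k$ and $\phi_{k+1}$ restrict on this segment to the unique dilation onto $[w_1, w_{k+2}]$ carrying $v_1$ to $w_1$; non-adjacent triangles of the fan meet only at $v_1 \mapsto w_1$. Hence the $\phi_k$ glue to a well-defined map $\phi : P \to Q$, and $\phi$ restricts to a dilation on each edge of $P$, since every edge of $P$ lies in a single triangle $T_k$ as an actual edge. The map $\phi$ is continuous by the pasting lemma and is a bijection because the $T_k'$ tile $Q$ and the $\phi_k$ are compatible bijections that respect faces; as $P$ is compact and $Q$ is Hausdorff, $\phi$ is a homeomorphism.

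It remains to see that $\phi$ is bilipschitz. Let $K = \max_k K_k$, where $\phi_k$ is $K_k$-bilipschitz. Since $P$ is convex, for $x,y \in P$ the hyperbolic geodesic $[x,y]$ lies in $P$ and realizes $d_P(x,y)$; it meets the diagonals finitely often, so we may choose points $x = p_0, p_1, \dots, p_m = y$ in order along $[x,y]$ with each consecutive pair $p_j, p_{j+1}$ contained in a common triangle $T_{k(j)}$. Then
\[
d_Q(\phi x, \phi y) \;\le\; \sum_{j=0}^{m-1} d_Q(\phi_{k(j)} p_j, \phi_{k(j)} p_{j+1}) \;\le\; K \sum_{j=0}^{m-1} d_P(p_j, p_{j+1}) \;=\; K\, d_P(x,y),
\]
the last equality holding because the $p_j$ are ordered along a geodesic of $P$. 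Thus $\phi$ is $K$-Lipschitz; applying the same computation to $\phi^{-1}:Q \to P$ (which restricts to $\phi_k^{-1}$ on each $T_k'$, with $Q$ convex) gives $d_P(\phi^{-1}a,\phi^{-1}b) \le K\, d_Q(a,b)$ for all $a,b \in Q$, hence $d_P(x,y) \le K\, d_Q(\phi x, \phi y)$. Therefore $\phi$ is a $K$-bilipschitz equivalence that is dilation on each edge of $P$.

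The main obstacle is passing from the purely local control furnished by Lemma \ref{triangles} to a global bilipschitz estimate: the key point is that convexity of $P$ (and of $Q$) forces $d_P$ to coincide with the ambient hyperbolic metric, so a geodesic breaks up across the triangulation into consecutive sub-segments whose lengths \emph{add} to $d_P(x,y)$, which makes the Lipschitz constants add rather than compound. Ensuring the triangle maps actually agree along the shared diagonals is a minor but genuine point, handled above by prescribing the vertex correspondence in each application of Lemma \ref{triangles}.
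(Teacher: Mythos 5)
Your proof is correct. The paper states Corollary \ref{cells} without a written proof, leaving implicit exactly the argument you give: fan-triangulate $P$ and $Q$ from a matched pair of vertices, apply Lemma \ref{triangles} with prescribed vertex correspondences so the triangle maps agree along shared diagonals, and then use convexity of $P$ (and of $Q$, for the inverse) to decompose a geodesic into sub-segments whose lengths add, so the local Lipschitz constants do not compound. This is also precisely the pattern the paper itself runs in the proof of Lemma \ref{tilings}, so your argument matches the intended one in both decomposition and key estimate.

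Two small points worth keeping in a polished write-up. First, the normalization step is a genuine (if minor) gap in the literal statement of Lemma \ref{triangles}, which does not promise control over which vertex goes where; your remark that the linear map between Euclidean comparison triangles can be chosen to realize any prescribed vertex correspondence, and that composing with the Belenkii--Burago maps (which restrict to isometries on edges) still yields a dilation on each edge, is exactly what is needed and should be said explicitly. Second, when you split the geodesic $[x,y]$ across the triangulation, if $[x,y]$ runs along a diagonal for a positive length you should note that the decomposition into consecutive sub-segments each contained in a single closed triangle is still possible (the diagonal itself is an edge of two adjacent triangles, and the two candidate maps agree there), so the estimate goes through unchanged.
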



For a more formal and general definition of polyhedral complexes and their metric, see \cite[Chapter 1.7]{bridsonhaefliger}. 

\begin{lemma} \label{tilings} {\it If $\widetilde{X}_1$ and $\widetilde{X}_2$ are geodesic metric spaces realized as isomorphic cell complexes with finitely many isometry types of hyperbolic polygons as cells, then $\widetilde{X}_1$ and $\widetilde{X}_2$ are bilipschitz equivalent. }
\end{lemma}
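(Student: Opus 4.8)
## Proof Proposal for Lemma \ref{tilings}

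The plan is to build the bilipschitz equivalence $\Phi : \widetilde{X}_1 \to \widetilde{X}_2$ cell-by-cell and then check that the pieces fit together with a uniform constant. Fix a cellular isomorphism $\iota$ between the two complexes. For each cell $P$ of $\widetilde{X}_1$, with corresponding cell $\iota(P) = Q$ of $\widetilde{X}_2$, apply Corollary \ref{cells} to obtain a bilipschitz equivalence $\phi_P : P \to Q$ which is dilation on each edge. The key point in making these maps assemble into a well-defined map on all of $\widetilde{X}_1$ is that on a shared edge $e$ of two adjacent cells $P, P'$, the maps $\phi_P|_e$ and $\phi_{P'}|_e$ are both the unique dilation from $e$ onto $\iota(e)$ (a dilation between two fixed segments is determined by their lengths), hence they agree. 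A minor point: one must choose the maps $\phi_P$ compatibly with the identification of vertices as well, which is automatic once we agree that on each edge the map is the length-dilation sending the appropriate endpoint to the appropriate endpoint. So $\Phi := \bigcup_P \phi_P$ is a well-defined cellular homeomorphism $\widetilde{X}_1 \to \widetilde{X}_2$.

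Next I would control the bilipschitz constants. Since there are only finitely many isometry types of cells in $\widetilde{X}_1$ (and in $\widetilde{X}_2$), and each $\phi_P$ depends only on the isometry type of $P$, there is a single $K \geq 1$ such that every $\phi_P$ is $K$-bilipschitz with respect to the intrinsic (path) metrics on $P$ and $Q$. The remaining work is to pass from this ``$K$-bilipschitz on each cell'' statement to a global bilipschitz estimate for $\Phi$ on $\widetilde{X}_1$ with its path metric. The upper bound is the easy direction: given $x, y \in \widetilde{X}_1$, take a geodesic from $x$ to $y$, subdivide it into subsegments each lying in a single closed cell (possible since the cell structure is locally finite and the geodesic is compact), map each subsegment forward by the relevant $\phi_P$, and concatenate; the image path has length at most $K \, d_{\widetilde{X}_1}(x,y)$, so $d_{\widetilde{X}_2}(\Phi(x), \Phi(y)) \leq K\, d_{\widetilde{X}_1}(x,y)$. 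Applying the same argument to $\Phi^{-1}$ (which is also cellular and $K$-bilipschitz on each cell) gives the lower bound, and hence $\Phi$ is $K$-bilipschitz.

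The main obstacle is the ``subdivide a geodesic into cell-supported pieces'' step, which needs a little care: one should invoke that these polyhedral complexes are $M_\kappa$-complexes in the sense of Bridson--Haefliger \cite[Chapter I.7]{bridsonhaefliger} with finitely many shapes, so Bridson's theorem guarantees they are complete geodesic spaces and that the path metric is genuinely the length metric obtained by concatenating intrinsic segments within cells; in such complexes any rectifiable path (in particular any geodesic) crosses cells in a controlled way and can be broken into finitely many arcs each contained in a single closed cell. Equivalently, one can bypass this by noting that the length metric on $\widetilde{X}_i$ is by definition the infimal length of chains of cell-segments, and that $\Phi$ distorts the length of any single cell-segment by at most a factor of $K$ in each direction; taking the infimum over chains on both sides then yields the bilipschitz inequality directly, with no need to discuss geodesics at all. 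I would present it this second way, as it keeps the argument short and avoids any subtlety about existence or behavior of geodesics.
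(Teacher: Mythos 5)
Your proof takes essentially the same approach as the paper: build the map cell-by-cell using Corollary \ref{cells} with a uniform constant $K$ from the finiteness of isometry types, observe that the dilations agree along shared edges so the pieces glue to a cellular homeomorphism $\Phi$, and then obtain the two bilipschitz inequalities by decomposing geodesics into cell-supported subsegments (one direction via $\Phi$, the other via $\Phi^{-1}$). Your closing remark---that one can work directly with the length metric as an infimum over cellwise chains to sidestep the decomposition of geodesics---is a reasonable technical variant, but the paper's proof does proceed via the geodesic-subdivision argument you first describe, citing the finiteness of shapes to justify it.
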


\begin{proof}
Suppose geodesic metric spaces $\widetilde{X}_1$ and $\widetilde{X}_2$ are realized as isomorphic cell complexes with polygonal cells $\{V_i\}_{i \in I}$ and $\{W_i\}_{i \in I}$, respectively. Suppose the cell complex isomorphism maps $V_i$ to $W_i$ for all $i \in I$. By Corollary \ref{cells} and since there are finitely many isometry types of hyperbolic polygons in the cell complexes, we may take this map $\phi_i:V_i \rightarrow W_i$ to be a $K$-bilipschitz equivalence for some $K \in \R$ that restricts to dilation on each of the edges of $V_i$. These maps agree along the intersection of two polygons, thus, there is a well-defined cellular homeomorphism $\Phi: \widetilde{X}_1 \rightarrow \widetilde{X}_2$ that restricts to the $K$-bilipschitz equivalence $\phi_i$ on each cell.

Let $x,y \in \widetilde{X}_1$, and let $p$ be the geodesic path from $x$ to $y$. Since the cell complex contains finitely many isometry types of convex hyperbolic polygons, the path $p$ can be decomposed into a finite union of geodesic segments $\{[x_i, x_{i+1}]\}_{i=0}^{n-1}$, with $x_0=x$ and $x_n=y$, and so that each subpath $[x_i, x_{i+1}]$ is contained entirely in a $2$-cell $V_i$. Since $\Phi(p)$ is a path connecting $\Phi(x)$ and $\Phi(y)$, 
\begin{eqnarray*}
 d(\Phi(x), \Phi(y)) &\leq& \sum_{i=0}^{n-1} d(\phi_i(x_i),\phi_i(x_{i+1}))\\
&\leq& \sum_{i=0}^{n-1} Kd(x_i, x_{i+1})\\
&=& Kd(x,y).
\end{eqnarray*}
The other inequality follows similarly. Namely, suppose $q$ is a geodesic path from $\Phi(x)$ to $\Phi(y)$. The path $q$ can be decomposed into a union of geodesic segments $\{[w_i,w_{i+1}]_{i=0}^{m-1}\}$ where $w_0 = \Phi(x)$, $w_m = \Phi(y)$ and the interior of $[w_i, w_{i+1}]$ is contained entirely in a $2$-cell $W_i$. Then, since $\Phi^{-1}(q)$ is a path from $x$ to $y$ and $\phi_i$ is a $K$-bilipschitz equivalence for all $i$,
\begin{eqnarray*}
 d(\Phi(x),\Phi(y)) &=& \sum_{i=0}^{m-1} d(w_i, w_{i+1})\\
&\geq& \sum_{i=0}^{m-1} \frac{1}{K}d(\phi_i^{-1}(w_i), \phi_i^{-1}(w_{i+1}))\\
&\geq& \frac{1}{K}d(x,y).
\end{eqnarray*}
Thus, $\frac{1}{K}d(x,y) \leq d(\Phi(x), \Phi(y)) \leq Kd(x,y),$ so $\Phi$ is a $K$-bilipschitz equivalence.
\end{proof}

In the construction of the bilipschitz equivalence, we find it useful to restrict to a specific metric on a space $X \in \mathcal{X}_S$, and we will use the following lemma. 

\begin{lemma} \label{ACbilip}
 {\it If $X_1, X_2 \in \mathcal{X}_S$ and $\pi_1(X_1)$ and $\pi_1(X_2)$ are abstractly commensurable, then $\widetilde{X}_1$ and $\widetilde{X}_2$ are bilipschitz equivalent with respect to any CAT$(-1)$ metric on $X_1$ and $X_2$ that is hyperbolic on each surface. }
\end{lemma}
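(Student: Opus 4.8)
The plan is to reduce to the polygonal-tiling criterion of Lemma~\ref{tilings}. By Corollary~\ref{top}, abstract commensurability of $\pi_1(X_1)$ and $\pi_1(X_2)$ gives finite-sheeted covers $q_i\colon \hat X_i \to X_i$ together with a homeomorphism $h\colon \hat X_1 \to \hat X_2$. Equip each $\hat X_i$ with the metric $m_i$ lifted from the chosen CAT$(-1)$ metric on $X_i$; since $q_i$ is a local isometry and $\widetilde X_i$ is simply connected, $\widetilde X_i$ is isometric to the universal cover of $(\hat X_i, m_i)$, so it is enough to produce a bilipschitz equivalence between these two universal covers. Note each $\hat X_i$ is a finite union of compact surfaces with boundary glued along boundary circles, and in the metric $m_i$ each such surface piece is hyperbolic with totally geodesic boundary and each singular curve is a closed geodesic; moreover $(\hat X_i, m_i)$ is compact and locally CAT$(-1)$. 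Since a homeomorphism preserves the set of points with no Euclidean neighborhood, $h$ carries the singular locus of $\hat X_1$ onto that of $\hat X_2$ and hence carries surface pieces to surface pieces.

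The main step is to choose a single combinatorial triangulation that can be realized by geodesics in \emph{both} metrics. First I would fix a topological triangulation $\mathcal T$ of $\hat X_1$ in which every surface piece (hence every singular curve) is a subcomplex, and transport it through $h$ to a combinatorially isomorphic triangulation $h(\mathcal T)$ of $\hat X_2$ with the same property. Using compactness and local CAT$(-1)$-ness of $m_1$ and $m_2$, there is a uniform $\varepsilon>0$ so that metric balls of radius $\varepsilon$ are embedded and convex in either metric; after finitely many subdivisions I may assume every closed simplex of $\mathcal T$ lies in such a ball, simultaneously for $m_1$ on $\hat X_1$ and $m_2$ on $\hat X_2$. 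Then I would straighten: replace each edge by the short geodesic between its endpoints and each triangle by the geodesic triangle, once with respect to $m_1$ on $\hat X_1$ and once with respect to $m_2$ on $\hat X_2$. Because the interior of each $2$-simplex lies in a single surface piece and the singular curves are already geodesic in both metrics, straightening happens inside hyperbolic surfaces with geodesic boundary and moves each simplex only within a convex ball, so it yields honest geodesic triangulations $\mathcal T^{(1)}$ of $(\hat X_1, m_1)$ and $\mathcal T^{(2)}$ of $(\hat X_2, m_2)$; these are combinatorially isomorphic (both equal the subdivided $\mathcal T$) and finite, with every cell a convex hyperbolic polygon.

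Finally, lifting to universal covers, $\mathcal T^{(1)}$ and $\mathcal T^{(2)}$ induce cell structures on $\widetilde X_1$ and $\widetilde X_2$ that are isomorphic as cell complexes (the combinatorial isomorphism downstairs lifts) and that involve only finitely many isometry types of hyperbolic polygons, since there are only finitely many cells in $\hat X_1$ and $\hat X_2$. Lemma~\ref{tilings} then produces the desired bilipschitz equivalence $\widetilde X_1 \to \widetilde X_2$. I expect the only real obstacle to be the straightening/subdivision step: certifying that after finitely many subdivisions the triangulation transported by $h$ is simultaneously geodesically realizable in both piecewise-hyperbolic metrics with no degenerate or overlapping simplices. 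The rest is bookkeeping---the reduction to a common cover via Corollary~\ref{top}, the fact that $h$ respects the singular locus, and the appeal to Lemma~\ref{tilings}.
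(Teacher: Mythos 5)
Your proposal follows the paper's argument exactly: use topological rigidity (Corollary~\ref{top}) to pass to homeomorphic finite covers, realize those covers as combinatorially isomorphic complexes built from finitely many isometry types of geodesic simplices in each of the two lifted piecewise-hyperbolic metrics, lift to universal covers, and apply Lemma~\ref{tilings}. The subdivision-and-straightening step you flag as the one delicate point is precisely the step the paper dispatches with ``After subdividing if necessary, we may assume each triangle in $Y_i$ is isometric to a hyperbolic triangle,'' so you have correctly reproduced both the route and the spot that requires care.
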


\begin{proof}
 Let $X_1, X_2 \in \mathcal{X}_S$, and suppose $\pi_1(X_1)$ and $\pi_1(X_2)$ are abstractly commensurable. By Theorem \ref{toprigidity}, there exist finite-sheeted covers $Y_i \rightarrow X_i$ that are homeomorphic. Choose a locally CAT$(-1)$ metric on $X_1$ and $X_2$ that is hyperbolic on each surface. This piecewise hyperbolic metric on $X_i$ lifts to a piecewise hyperbolic metric on $Y_i$. Since $Y_1$ and $Y_2$ are homeomorphic, we may realize $Y_1$ and $Y_2$ as finite simplicial complexes with isomorphic $1$-skeleta. After subdividing if necessary, we may assume each triangle in $Y_i$ is isometric to a hyperbolic triangle. So, $\widetilde{Y}_1 \equiv \widetilde{X}_1$ and $\widetilde{Y}_2 \equiv \widetilde{X}_2$ may be realized as simplicial complexes with isomorphic $1$-skeleta and each built from finitely many isometry types of hyperbolic triangles. By Lemma \ref{tilings},  $\widetilde{Y}_1 \equiv \widetilde{X}_1$ and $\widetilde{Y}_2 \equiv \widetilde{X}_2$ are bilipschitz equivalent. 
 \end{proof}
 


\vskip.2in

\subsection{Construction of the cellular isomorphism}

\begin{thm} \label{bclassification} 
 {\it If $X_1, X_2 \in \mathcal{X}_S$ and $\widetilde{X}_1$ and $\widetilde{X}_2$ are their universal covers equipped with a CAT$(-1)$ metric that is hyperbolic on each surface, then there exists a bilipschitz equivalence $\widetilde{X}_1 \rightarrow \widetilde{X}_2$.  }
\end{thm}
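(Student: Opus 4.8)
The plan is to reduce the statement to Lemma \ref{tilings} by realizing $\widetilde{X}_1$ and $\widetilde{X}_2$ as isomorphic cell complexes built from finitely many isometry types of convex hyperbolic polygons. The abstract commensurability version of this is already handled by Lemma \ref{ACbilip}, so the real content is to push past that: even when $\pi_1(X_1)$ and $\pi_1(X_2)$ are \emph{not} abstractly commensurable, the universal covers are bilipschitz. The key geometric observation is that both $\widetilde{X}_1$ and $\widetilde{X}_2$ are obtained by gluing copies of $\Hy^2$ along a collection of disjoint geodesic lines arranged in a tree-like pattern: the underlying ``pattern tree'' (or Bass--Serre tree of the amalgam, fattened) is the same combinatorial object for every group in $\mathcal{C}_S$, namely the tree dual to the splitting, and the local structure at each wall is a single geodesic line in $\Hy^2$ along which finitely many other copies of $\Hy^2$ are attached.

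First I would fix, on each $X_i$, a piecewise-hyperbolic metric that is hyperbolic on each surface, with the two glued curves $\gamma_i,\gamma_i'$ having equal geodesic length; call their common length $\ell_i$. Next, in each surface $S_{g}$ appearing in $X_i$, I would choose a cell structure adapted to the curve $\gamma_g$: cut $S_g$ along a multicurve containing $\gamma_g$ so that the result is a union of hyperbolic polygons (for instance, pants decompose and then cut each pair of pants into two right-angled hexagons), arranging that $\gamma_g$ is a union of edges of these polygons. Lifting to $\widetilde{X}_i$, this realizes $\widetilde{X}_i$ as a cell complex whose $2$-cells are convex hyperbolic polygons, with only finitely many isometry types (the two surfaces contribute finitely many polygon types each), and whose $1$-skeleton contains all the gluing lines as subcomplexes. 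The crucial point to arrange is that the combinatorial \emph{isomorphism type} of this cell complex does not depend on $i$: the tree of surfaces is the same, each surface is cut into the ``same'' combinatorial polygon pattern (a four-holed or higher pattern can always be refined to a common one), and the gluing lines are each subdivided into the same number of edges. Here I would use that any two hyperbolic structures on a fixed topological surface admit isomorphic such cell decompositions, and that a separating curve and a non-separating curve of the same topological type can be handled uniformly after passing to the polygonal pattern, since Lemma \ref{tilings} only sees combinatorics and finiteness of isometry types, not metric data. With the two cell complexes combinatorially identified, Corollary \ref{cells} gives a bilipschitz map on each polygon restricting to dilation on edges, these agree on shared edges (dilation is determined by the ratio of edge lengths), and Lemma \ref{tilings} assembles them into a bilipschitz equivalence $\widetilde{X}_1 \to \widetilde{X}_2$.

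The main obstacle is precisely the claim that the two cell complexes can be made combinatorially \emph{isomorphic} rather than merely ``of the same local type'': the surfaces $S_{g_1}$ and $S_{g_2}$ may have wildly different genus, so one cannot literally match their polygon patterns. The resolution — and the step that needs the most care — is that the isomorphism required by Lemma \ref{tilings} need not respect the surface decomposition or be equivariant; one is free to build a single abstract cell complex $\mathcal{K}$, with finitely many polygon-isometry types in \emph{each} of two metric realizations, such that $\widetilde{X}_1 \cong \mathcal{K}$ with the first metric and $\widetilde{X}_2 \cong \mathcal{K}$ with the second. Concretely I expect to build $\mathcal{K}$ by hand as a tree of ``slabs,'' where each slab is a copy of a strip $\gamma \times [0,1]$ subdivided into squares plus some capping polygons, and to verify that truncating/expanding the genus only changes metric data (edge lengths, polygon shapes) and not combinatorics, because the extra topology of a higher-genus surface relative to its curve can be absorbed into finitely many additional polygon types glued in a periodic pattern along the tree. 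Once the abstract model $\mathcal{K}$ and its two metric realizations are in place, the rest is a direct application of Corollary \ref{cells} and Lemma \ref{tilings}, and Corollary \ref{qiclassification} follows from the \v{S}varc--Milnor lemma.
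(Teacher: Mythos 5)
Your overall framework is right and matches the paper's strategy: reduce to Lemma \ref{tilings} by realizing both universal covers as isomorphic cell complexes with finitely many isometry types of convex hyperbolic polygons as cells, then apply Corollary \ref{cells}. You also correctly identify the crux — the surfaces $S_{g_1}$ and $S_{g_2}$ may have different genus, so no ``nice'' polygonal decomposition of the individual surfaces (pants into right-angled hexagons, say) will lift to combinatorially isomorphic cell complexes in the universal covers. But having named the obstruction, you do not overcome it; you only assert that one can ``build $\mathcal{K}$ by hand as a tree of slabs'' and that ``the extra topology of a higher-genus surface relative to its curve can be absorbed into finitely many additional polygon types glued in a periodic pattern along the tree.'' That sentence is exactly the content of the theorem, and it is not proved. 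In particular, it is not clear in your sketch how the ``extra'' hexagons coming from a higher-genus surface are distributed among polygons of the model $\mathcal{K}$ while keeping the number of isometry types finite and the dilation constants on edges consistent across shared boundaries.

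The paper's proof handles this with two ideas you have not supplied. First, it reduces via Theorem \ref{classification} and Lemma \ref{ACbilip} to the case where each $X_i$ consists of exactly four surfaces with one boundary component identified along a single singular curve; this removes the separating/non-separating asymmetry and leaves a single branching geodesic per lift. Second — and this is the essential new idea — instead of fixing a decomposition of each surface and hoping they match, the paper builds the cell decompositions of $\widetilde{X}_1$ and $\widetilde{X}_2$ \emph{simultaneously and recursively}. It starts from fundamental domains $F_i \subset H_i$ and $D_i \subset J_i$, each a convex polygon with exactly one ``branching'' edge on the wall, and if $F_1$ and $D_1$ have different numbers of sides, it enlarges the smaller one by annexing vertices and edges from adjacent translates of the fundamental domain (Figure \ref{maketile1}). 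It then extends one polygon at a time along each new edge, again equalizing side counts by borrowing from neighboring fundamental domains, and verifies that every polygon produced lies in a fixed finite set $\mathcal{P}$ (bounded side count, vertices in a fixed orbit, edges connecting vertices of a translate of the fundamental domain). Only then does it propagate the decomposition across the walls using the group actions. Without this adaptive, side-count-balancing construction, there is no reason for the cell complexes you propose to be isomorphic, and your sketch does not contain a substitute for it.
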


\begin{proof}
   Let $X_1, X_2, \in \mathcal{X}_S$. If $X \in \mathcal{X}_S$, then by the abstract commensurability classification within $\mathcal{C}_S$ given in Theorem \ref{classification}, there exists $Y \in \mathcal{X}_S$ so that $Y$ consists of four surfaces of genus at least two and one boundary component, identified to each other along their boundary components and so that $\pi_1(X)$ and $\pi_1(Y)$ are abstractly commensurable. So, by Lemma \ref{ACbilip}, it suffices to consider the case where
   \[ X_1 = \bigcup_{i=1}^4 S_i, \] \[ X_2 = \bigcup_{i=1}^4 T_i  ,\] where $S_i$ is a surface of genus greater than two and one boundary component for $1 \leq i \leq 4$, and the union identifies the boundary components of the $S_i$; the space $X_2$ is similar. Choose locally CAT$(-1)$ metrics on $X_1$ and $X_2$ that are hyperbolic on each surface, and let $\widetilde{X}_i$ denote the universal cover of $X_i$ equipped with this metric.
 
  Let $\gamma_i$ denote the singular curve in $X_i$ and let $\tg_i$ represent the component of the preimage of $\gamma_i$ in $\widetilde{X}_i$ stabilized by $\la [\gamma_i] \ra$. Let $\mathcal{L}_i = \{ g \cdot \tg_i \, | \, g \in \pi_1(X_i)\}$. Let $H_1, H_2, H_3, H_4$ be the four components of $\widetilde{X}_1 \backslash \mathcal{L}_1$ incident to $\tg_1$ so that $\pi_1(S_i)$ stabilizes $H_i$, and let $J_1, J_2, J_3, J_4$ be the four components of $\widetilde{X}_2 \backslash \mathcal{L}_2$ incident to $\tg_2$ so that $\pi_1(T_i)$ stabilizes $J_i$. 
 
 
    \begin{figure}[t]
   \includegraphics[height=9.0cm]{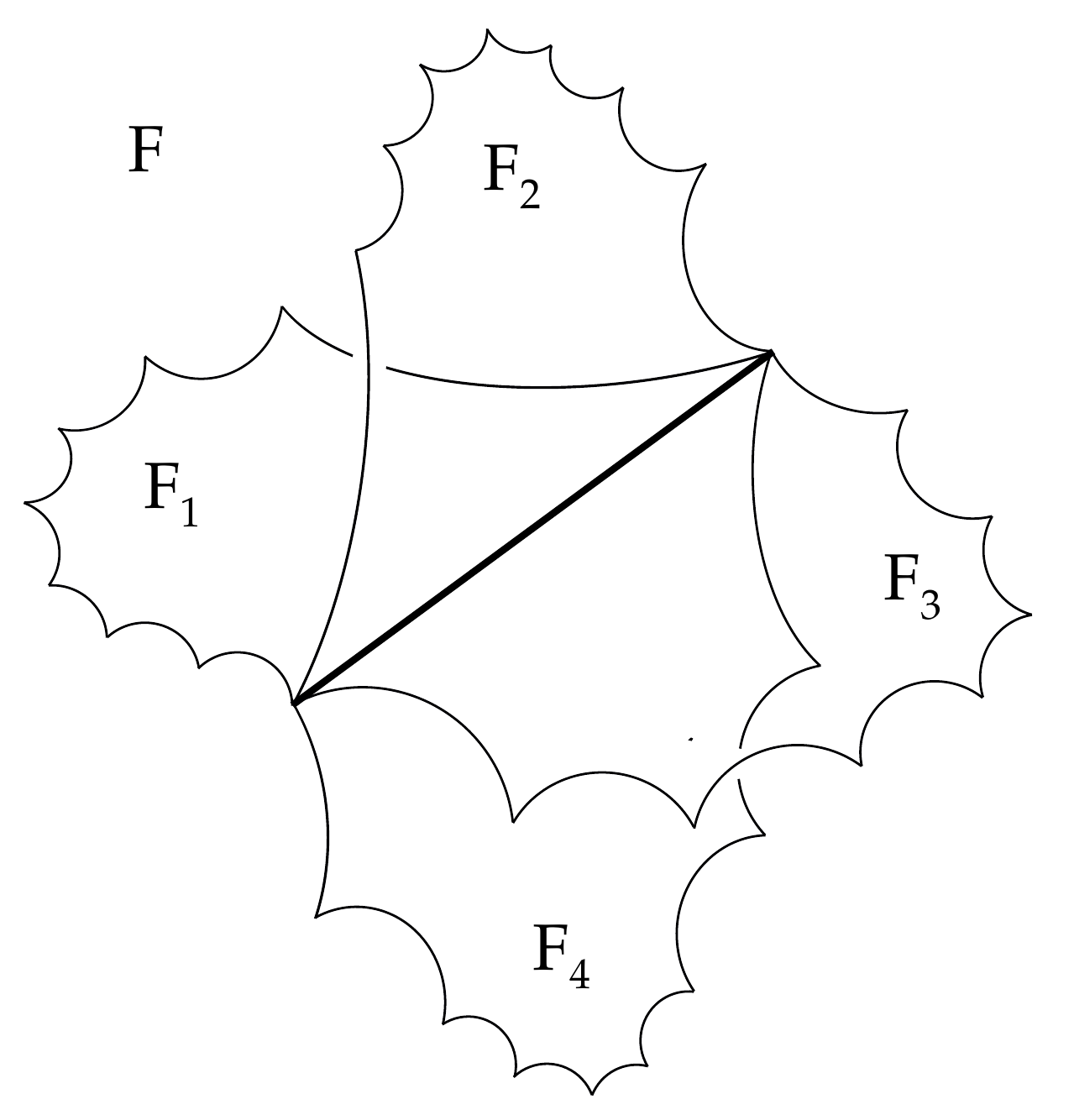}
   \caption[A fundamental domain]{ {\small An illustration of the fundamental domain $F$ for the action of $\pi_1(X_1)$ on $\widetilde{X}_1$. The fundamental domain is built from four convex hyperbolic polygons $F_i$. The darkened edge is referred to as the {\it branching edge}.} }
  \label{fdsname}
\end{figure}
 
  Let $F = \displaystyle \bigcup_{i=1}^4 F_i$ be a connected fundamental domain for the action of $\pi_1(X_1)$ on $\widetilde{X}_1$ that comes from a cell division of $X_1$ with a single vertex and so that 
  \begin{itemize}
   \item $F_i \subset H_i$ is a fundamental domain for the action of $\pi_1(S_i)$ on $H_i$, 
   \item $F_i$ is a convex hyperbolic polygon with at least nine sides so that exactly one edge of $F_i$ lies in $\tg_1$. We refer to this distinguished edge as the {\it branching edge} of $F_i$. The remaining vertices of $F_i$ lie on $g\tg_1$ for distinct $g \in \pi_1(X_1)$, 
   \item the branching edges $F_i$ are identified via an isometry to form the connected fundamental domain $F$. 
  \end{itemize}
An example is given in Figure \ref{fdsname}. Let $D = \displaystyle \bigcup_{i=1}^4 D_i$ be a connected fundamental domain for the action of $\pi_1(X_2)$ on $\widetilde{X}_2$ constructed similarly. Note that $F$ and $D$ are not {\it strict fundamental domains} (see \cite[Definition II.12.7]{bridsonhaefliger}); in particular, $F$ and $D$ contain many vertices.

 \begin{center}
  {\it Isometry types of cells used in the cell decompositions: }
 \end{center}
 
 Let $x$ and $y$ be one endpoint of the branching edges in $F$ and $D$, respectively. We will show that each polygon in the cell complexes constructed lies in the finite set of polygons $\mathcal{P}$ that satisfy the following three conditions. 
 
 \begin{itemize}
  \item The vertex sets are 
   \begin{center}
     $\mathcal{V}_1 = \{g \cdot x \, | \, g \in \pi_1(X_1)\}$ \quad and \quad $\mathcal{V}_2 = \{g \cdot y \, | \, g \in \pi_1(X_2) \}$,
   \end{center}
    respectively, the same vertices that appear in the tilings by fundamental domains.
  \item Each edge is isometric to a geodesic segment connecting two vertices of $F$ or $D$. 
  \item The number of sides of each polygon is bounded above by $M \in \N$, where $M$ is two times the maximum number of sides in $F$ or $D$ times the maximum valance $x$ or $y$. 
 \end{itemize}
  
 \begin{center}
   {\it Construction of the first cell in $H_1$ and $J_1$:}
 \end{center}
 
 \begin{figure}[t]
   \includegraphics[height=10.0cm]{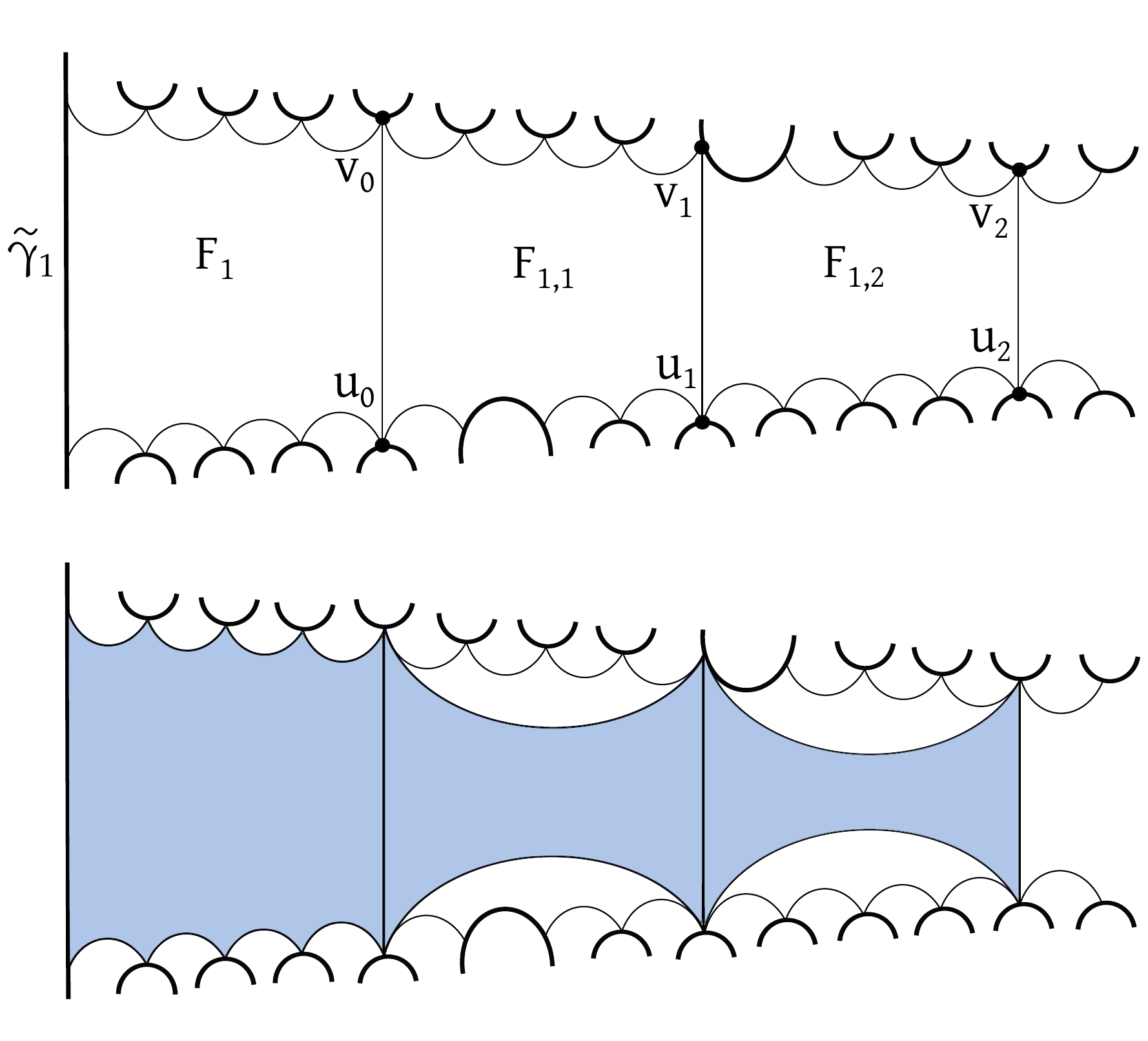}
   \caption[Translates of the fundamental domain and the first cell in the cell division of $\widetilde{X}$ for $X \in \mathcal{X}_S$]{ {\small In the top image are translates $F_{1,i}$ of the fundamental domain $F_1$ in $H_1$. The dark lines are translates of $\tg_1$, which are boundary lines of the region $H_1$. The vertices $u_i$ and $v_i$ are selected as in the proof of the theorem. Shaded below is the first tile in $H_1$ in the setting where the fundamental domain $D_1$ has more sides than the fundamental domain $F_1$.} }
  \label{maketile1}
\end{figure}
 
 Let $V$ be the vertices in the fundamental domain $F_1$ and let $W$ be the vertices in the fundamental domain $D_1$. If $V$ and $W$ have the same size, the fundamental domains themselves are the first cells used in the cell decomposition of $H_1$ and $J_1$; continue to the definition of the map. Otherwise, without loss of generality, $|W|-|V| = k >0$. We will enlarge $V$ until $|V| = |W|$. 
 
 Suppose $ k = 2n+m$ for some $n \geq 0$ and $m  \in \{0,1\}$. By the choice of the fundamental domain, there is a non-branching edge $\{u_0, v_0\}$ of $F_1$ that is disjoint from the branching edge of $F_1$ and its two adjacent edges. The edge $\{u_0, v_0\}$ lies in a second translate of the fundamental domain $F_{1,1} \subset H_1$. There is a non-branching edge $\{u_1, v_1\}$ in $F_{1,1}$ disjoint from $\{u_0, v_0\}$ and its two adjacent edges. Similarly, there are edges $\{u_i, v_i\}$ for $1 \leq i \leq n+1$, where $\{u_i, v_i\}$ and $\{u_{i+1}, v_{i+1}\}$ lie in the same fundamental domain $F_{1,i+1}$ for $1 \leq i \leq n$, and $\{u_i, v_i\}$ is disjoint from $\{u_j, v_j\}$ and its two adjacent edges for $i \neq j$, as illustrated in Figure \ref{maketile1}. 
 
 To construct the cycle boundary of $P_V$, the first cell in $H_1$, start with the cycle boundary of $F_1$. Remove the edge $\{u_0, v_0\}$. Add geodesic segments $\{u_i, u_{i+1}\}$ and $\{v_i, v_{i+1}\}$ for $1 \leq i \leq n-1$. Up to relabeling the $u_i$ and $v_i$, we may assume $\{u_i, u_{i+1}\}$ and $\{v_i, v_{i+1}\}$ do not intersect. If $k$ is even, add $\{u_n, v_n\}$ to complete the cycle boundary of the polygon. If $k$ is odd, add $\{u_n, u_{n+1}\}$ and $\{v_n, u_{n+1}\}$ to complete the cycle. Attach a $2$-cell to this boundary cycle to form the first cell $P_V$ in $H_1$. Let $P_W$ be the fundamental domain $D_1$, the first cell in $J_1$. 
 
 Map $P_V$ to $P_W$ by a cellular homeomorphism $\phi$, sending the branching edge of $P_V$ to the branching edge of $P_W$, and dilating along each edge of the tile. After extending the fundamental domain $F_1$ to the tile $P_V$, it is possible that $P_V$ is not convex. If this is the case, subdivide $P_V$ and $P_W$ isomorphically into convex polygons so the configurations have isomorphic $1$-skeleta. Observe that the number of edges in any polygon is bounded above by the size of the largest fundamental domain, and each edge connects vertices that lie in a common translate of the fundamental domain. Thus, $P_V, P_W \in \mathcal{P}$.

 \begin{center}
 {\it Constructing the remaining cells in $H_1$ and $J_1$: }
 \end{center}

 Extend the cell decompositions to all of $H_1$ and $J_1$ recursively. Along each new edge of a polygon built during the preceding stage, build one new polygon in $H_1$ and a corresponding new polygon in $J_1$. Each new polygon is constructed in a manner similar to the first polygons. Begin by constructing one new polygon along each edge of $P_V$ and $P_W$ that lies in the interior of $H_1$ and $J_1$ as follows. 
 
 Let $\{a, a_0\}$ be an edge of $P_V$ that lies in the interior of $H_1$ and let $\{b, b_0\} = \phi(\{a, a_0\})$. By construction, the edge $\{a, a_0\}$ connects two vertices in a translate of the fundamental domain, and the interior of this geodesic segment either lies on a non-branching edge of a translate of the fundamental domain or in the interior of a translate of the fundamental domain. This distinction does not affect the construction of the new cells.  The vertices $a$ and $a_0$ lie in distinct translates of $\tg_1$ that are boundary lines of $H_1$. Let $\{a, a'\}$ and  $\{a_0, a_0'\}$ be the branching edges on these translate of $\tg_1$ that lie in the component of $H_1 \backslash \{a, a_0\}$ that does not contain $P_V$. Let $\{b,b'\}$ and $\{b_0, b_0'\}$ be the analogous edges in $J_1$. We form cycles $C_A$ in $H_1$ and $C_B$ in $J_1$ that contain the paths $\{a',a,a_0,a_0'\}$ and $\{b',b,b_0, b_0' \}$, respectively, and will serve as the boundary cycles of the new cells constructed. The branching edges of the tiling by fundamental domains are distinguished; so, to ensure $C_A$ can be mapped to $C_B$, we extend these paths  $\{a',a,a_0,a_0'\}$ and $\{b',b,b_0, b_0' \}$ to cycles that contain no other branching edges.

   \begin{figure}[t]
   \includegraphics[height=9.0cm]{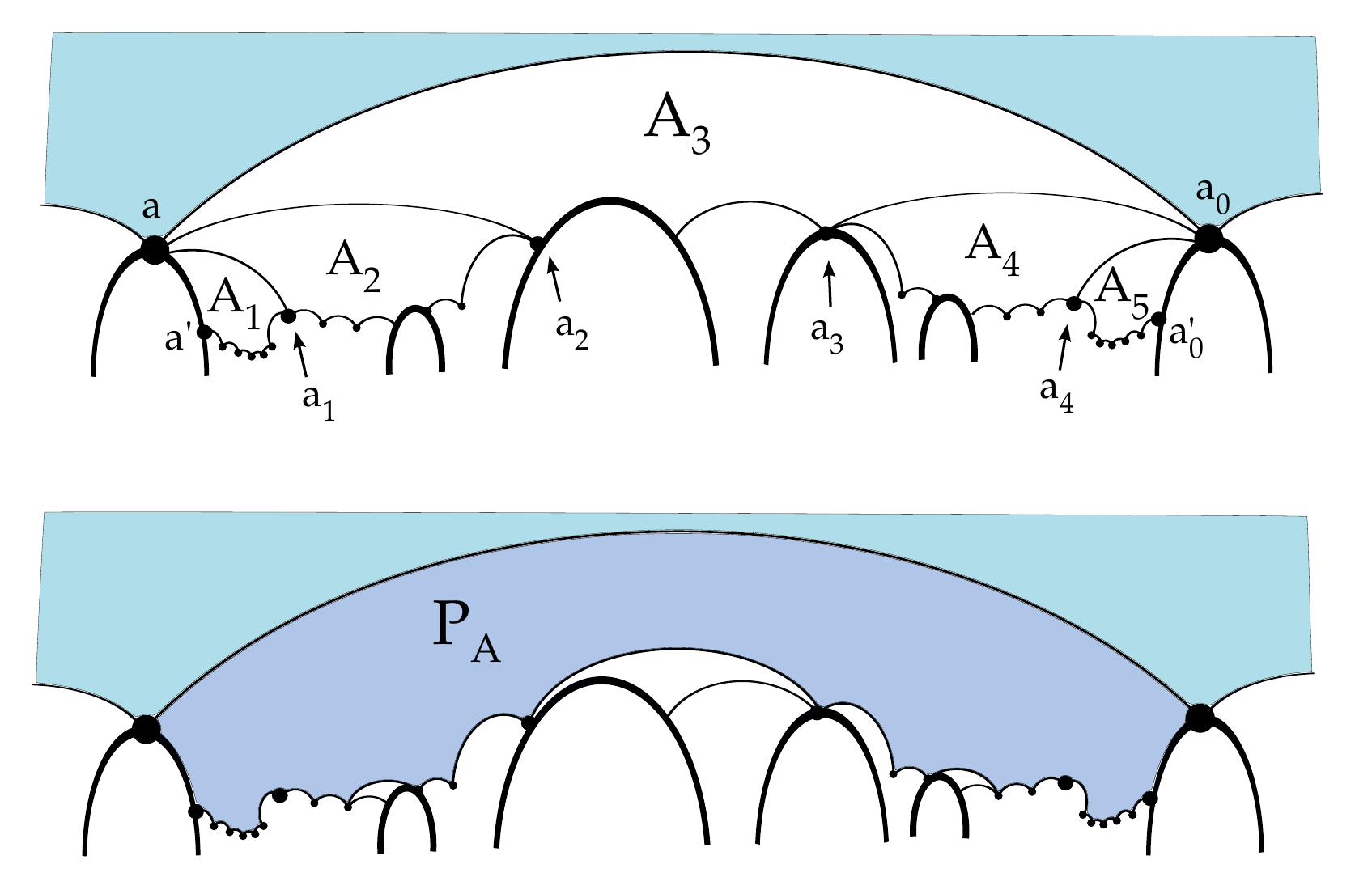}
   \caption[Translates of the fundamental domain and the second cell in the cell division of $\widetilde{X}$ for $X \in \mathcal{X}_S$]{ {\small The figures illustrate how to extend the tiling recursively along an edge $\{a,a_0\}$ of a (shaded) tile previously constructed. The $A_i$ are translates of the fundamental domain that intersect $a$ or $a_0$, and the $a_i$ are their points of intersection. The dark lines are translates of $\tg_1$ that bound $H_1$. Below, the new tile $P_A$ is drawn; its cycle boundary contains all of the $a_i$ as well as paths $p_i \subset A_i$ that include the other vertices of $A_i$, except that only one vertex is chosen from a boundary geodesic. Then, the only edges of $P_A$ that lie on the boundary geodesics are $\{a,a'\}$ and $\{a_0, a_0'\}$.  } }
  \label{maketile2}
\end{figure}
 
 Let $A_1, \ldots, A_m$ be the (non-empty) set of translates of the fundamental domain $F_1$ in $H_1$ that intersect $a$ or $a_0$ and the component of $H_1 \backslash \{a, a_0\}$ that does not contain $P_V$. Note that if the edge $\{a, a_0\}$ lies in the interior of a fundamental domain, then $A_i$ may only be part of a fundamental domain for some $i$. Suppose the $A_i$ are labeled so that $A_1$ contains $\{a,a'\}$, $A_n$ contains $\{a_0, a_0'\}$, and $A_i$ and $A_{i+1}$ intersect in an edge $\{\epsilon, a_i\}$ of the tiling by fundamental domains where $\epsilon \in \{a,a_0\}$ and $1 \leq i \leq m-1$, as illustrated in Figure \ref{maketile2}. Let $B_1, \ldots, B_n$ and $b_1, \ldots, b_{n-1}$ be similar. Form an embedded cycle \[C_A = \{a, a', p_1, a_1, p_2, a_2, \ldots, a_{m-1}, p_m, a_0', a_0 \},\] where $p_i$ is an embedded path in $A_i$ containing the remaining vertices of $\partial A_i$, but choosing only one vertex from a branching edge of $A_i$. Let \[C_B = \{b, b', q_1, b_1, q_2, b_2, \ldots, b_{n-1}, q_n, b_0', b_0 \}\] be similar. If $|C_A| = |C_B|$, continue to the cell and map definitions. Otherwise, suppose without loss of generality, $|C_B|>|C_A|$. By the choice of fundamental domains, there is a non-branching edge of a fundamental domain in the cycle $C_A$ disjoint from $\{a', a, a_0, a_0'\}$ and its adjacent edges, which can be used to extend the cycle $C_A$ as with the first cell. 
 After extending the cycle if necessary, attach $2$-cells to these boundary cycles to form polygons $P_A$ and $P_B$. 
 
 Map $P_A$ to $P_B$ by a cellular homeomorphism, sending $\{a', a, a_0, a_0'\}$ to $\{b', b, b_0, b_0'\}$, and dilating along each edge of the tile. As before, if $P_A$ or $P_B$ is not convex, subdivide $P_A$ and $P_B$ isomorphically into convex polygons so the configurations have isomorphic $1$-skeleta. The map $P_A \rightarrow P_B$ extends the map $P_V \rightarrow P_W$ and the cellular isomorphism. 
 
 By construction, $P_A,P_B \in \mathcal{P}$. Continue construction in this way along each edge of each polygon constructed. The cell complexes built in the regions $H_1$ and $J_1$ are exhaustive since the tiling of these regions by the fundamental domains $F_1$ and $D_1$, respectively, is exhaustive. That is, in our cell decomposition of $H_1$, the first polygon contains the fundamental domain $F_1$, the next round of polygons contain all of the translates of the fundamental domain $F_1$ that are adjacent to $F_1$, the following round of polygons contain all of the translates of $F_1$ adjacent to these fundamental domains, and so on; the cell decomposition of $J_1$ is similar. 
 
 \begin{center}
  {\it Extending the cell decomposition to the entire universal covers:}
 \end{center}

   First, realize $H_i$ and $J_i$ as isomorphic cell complexes for $2 \leq i \leq 4$ in the same manner as with $H_1$ and $J_1$. Let \[\phi_i: H_i \rightarrow J_i\] be the cellular homeomorphism constructed, which is dilation with the same constant when restricted to each boundary geodesic of $H_i$. So, the maps $\phi_i: H_i \rightarrow J_i$ and $\phi_j : H_j \rightarrow J_j$ agree when restricted to their intersection. We will use the action of the group to extend these maps and hence these cell decompositions to all of $\widetilde{X}_1$ and $\widetilde{X}_2$. 
   
   Recall, $\mathcal{L}_i = \{g\cdot \tg_i \, | \, g \in \pi_1(X_i) \}$ is the set of branching geodesics in $\widetilde{X}_i$. We define a cellular homeomorphism \[\Phi:\widetilde{X}_1 \rightarrow \widetilde{X}_2\] recursively, mapping components of $\mathcal{C}_1 = \widetilde{X}_1 \backslash \mathcal{L}_1$ to components of $\mathcal{C}_2 = \widetilde{X}_2 \backslash \mathcal{L}_2$. 
   
   Let \[\displaystyle \Phi:\bigcup_{i=1}^4 H_i \rightarrow \bigcup_{i=1}^4 J_i\] be defined by the maps above: $\Phi(H_i) = \phi_i(H_i)$. 
   
   Extend the map $\Phi$ along each unmapped branching geodesic of a component mapped during the preceding stage as follows. To begin, let $g\tg$ be a branching geodesic of $H_1$ for some nontrivial $g \in \pi_1(X_1)$. Suppose $R_2$, $R_3$, and $R_4$ are components of $\mathcal{C}_1$ that intersect the boundary of $H_1$ in the branching geodesic $g\tg_1$. Without loss of generality, $g^{-1}(R_i) = H_i$. The isometry $g:H_i \rightarrow R_i$ induces a cell decomposition of $R_i$ isomorphic to the cell decomposition of $H_i$. Suppose $\Phi(g\tg_1) = h\tg_2 \in J_1$ for some $h \in \pi_1(X_2)$. Let $S_2, S_3$, and $S_4$ be the other components of $\mathcal{C}_2$ incident to $h\tg_2$ so that $h^{-1}(S_i) = J_i$. Then, $h:J_i \mapsto S_i$ induces a tiling of $S_i$ isomorphic to the cell decompositions of $J_i$, $H_i$, and $R_i$. Map $R_i$ to $S_i$ by the cellular homeomorphism $h \circ \Phi_i \circ g^{-1}$ for $2 \leq i \leq 4$. 
   
   Repeat this procedure along each unmapped branching geodesic of the regions $H_i$ and $J_i$, then along each unmapped branching geodesic of the regions incident to $H_i$ and $J_i$, and so on to define $\Phi$, an exhaustive cellular homeomorphism $\widetilde{X}_1 \rightarrow \widetilde{X}_2$. By Lemma \ref{tilings}, $\widetilde{X}_1$ and $\widetilde{X}_2$ are bilipschitz equivalent. 
\end{proof}

\begin{corollary} \label{qiclassification}
 {\it If $G, G' \in \mathcal{C}_S$, then $G$ and $G'$ are quasi-isometric. }
\end{corollary}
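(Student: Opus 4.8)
The plan is to deduce the corollary from Theorem \ref{bclassification} together with the Milnor--\v{S}varc lemma. First I would fix spaces $X_1, X_2 \in \mathcal{X}_S$ with $G \cong \pi_1(X_1)$ and $G' \cong \pi_1(X_2)$; such spaces exist by the definition of $\mathcal{C}_S$ in Section 2. On each $X_i$ I would choose a locally CAT$(-1)$ metric that is hyperbolic on each of its two surfaces, as in Section 4.1, and lift it to the universal cover to obtain a $\pi_1(X_i)$-equivariant piecewise hyperbolic metric on $\widetilde{X}_i$; by the gluing proposition quoted in Section 4.1, $\widetilde{X}_i$ is then a complete CAT$(-1)$ space, in particular a proper geodesic metric space.

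Next I would observe that $G \cong \pi_1(X_1)$ acts on $\widetilde{X}_1$ by deck transformations, which are isometries for the lifted metric, and that this action is properly discontinuous and cocompact because $X_1$ is compact (a fundamental domain is a lift of $X_1$). Hence the Milnor--\v{S}varc lemma applies and shows that $G$ is quasi-isometric to $\widetilde{X}_1$; by the same argument $G'$ is quasi-isometric to $\widetilde{X}_2$.

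Finally, Theorem \ref{bclassification} provides a bilipschitz equivalence $\phi:\widetilde{X}_1 \rightarrow \widetilde{X}_2$, and every bilipschitz equivalence is in particular a quasi-isometry. Composing the resulting chain of quasi-isometries $G \to \widetilde{X}_1 \to \widetilde{X}_2 \to G'$ and using that being quasi-isometric is an equivalence relation, we conclude that $G$ and $G'$ are quasi-isometric.

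Since each step is a direct invocation of a standard fact (Milnor--\v{S}varc, Theorem \ref{bclassification}, transitivity of quasi-isometry), I do not expect any genuine obstacle here; the only points requiring a word of care are that the representatives $X_i$ may indeed be taken to be compact spaces in $\mathcal{X}_S$ (the surfaces are closed), and that the metric on $\widetilde{X}_i$ is chosen $\pi_1$-equivariantly so that the deck action is by isometries.
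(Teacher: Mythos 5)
Your proof is correct and is exactly the standard deduction the paper leaves implicit when stating Corollary \ref{qiclassification} immediately after Theorem \ref{bclassification}: Milnor--\v{S}varc identifies each group with the quasi-isometry type of its universal cover, and the bilipschitz equivalence of Theorem \ref{bclassification} is in particular a quasi-isometry.
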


\section{Analysis of the abstract commensurability classes within $\mathcal{C}_S$}

\subsection{Maximal elements within $\mathcal{C}_S$}

 Let $\mathcal{G} \subset \mathcal{C}_S$ be an abstract commensurability class. A {\it maximal element} for $\mathcal{G}$ is a group $G_0$ that contains every group in $\mathcal{G}$ as a finite-index subgroup. As described below, the existence of a maximal element that lies in $\mathcal{C}_S$ depends on whether the abstract commensurability class contains the fundamental group of a surface identified along a non-separating curve. For this reason, we define the following three subclasses that partition the spaces in $\mathcal{X}_S$ and the groups in $\mathcal{C}_S$. By Theorem \ref{classification}, these subclasses partition the abstract commensurability classes within $\mathcal{C}_S$ as well. 

 \begin{defn} \label{subclasses} 
  \begin{itemize}
   \item Let $\mathcal{X}_0$ be the set of spaces $X \in \mathcal{X}_S$ for which the complement of the singular curve in $X$ consists of four surfaces with one boundary component and unequal genus. Let $\mathcal{C}_0 \subset \mathcal{C}_S$ be the set of fundamental groups of spaces in $\mathcal{X}_0$. 
   
   \item Let $\mathcal{X}_1$ be the set of spaces $X \in \mathcal{X}_S$ for which the complement of the singular curve in $X$ contains either one surface with two boundary components and two surfaces with one boundary component and unequal genus, or, four surfaces, exactly two of which have equal genus. Let $\mathcal{C}_1 \subset \mathcal{C}_S$ be the set of fundamental groups of spaces in $\mathcal{X}_1$. 
   
   \item Let $\mathcal{X}_2$ be the set of spaces $X \in \mathcal{X}_S$ that can be realized as the union of two surfaces along curves of topological type one (see Definition \ref{toptype}). Let $\mathcal{C}_2 \subset \mathcal{C}_S$ be the set of fundamental groups of spaces in $\mathcal{X}_2$. 
  \end{itemize}
 \end{defn}

   {\bf Remark:} In Proposition \ref{maximalelement}, we show that an abstract commensurability class $\mathcal{G} \subset \mathcal{C}_S$ contains a maximal element within $\mathcal{C}_S$ if and only if $\mathcal{G} \subset \mathcal{C}_0$. In Corollary \ref{orbimaximal}, we prove that if $\mathcal{G} \subset \mathcal{C}_2$, then there is a maximal element for $\mathcal{G}$ within the class of right-angled Coxeter groups. For $\mathcal{G} \subset \mathcal{C}_1$, it is not known whether there exists a maximal element for the abstract commensurability class $\mathcal{G}$. 
 
 To construct covers of surfaces glued along separating curves, we use the following lemma, which is a converse to Lemma \ref{eulerchar} for hyperbolic surfaces with one boundary component.

\begin{lemma} \label{boundary} {\it
 For $g_i \geq 1$, if $\chi(S_{g_2,1}) = n\chi(S_{g_1,1})$, then $S_{g_2,1}$ $n$-fold covers $S_{g_1,1}$. }
\end{lemma}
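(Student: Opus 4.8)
The plan is to build the desired $n$-fold cover directly, using the fact that a surface with one boundary component has free fundamental group, so coverings correspond to subgroups and the combinatorics are easy to control. First I would compute: $\chi(S_{g_1,1}) = 1 - 2g_1$ and $\chi(S_{g_2,1}) = 1 - 2g_2$, so the hypothesis $\chi(S_{g_2,1}) = n\chi(S_{g_1,1})$ reads $1 - 2g_2 = n(1 - 2g_1)$, i.e. $g_2 = \frac{n(2g_1 - 1) + 1}{2}$; in particular $n$ must be odd for this to have an integer solution (the parity works out automatically since we are assuming such a $g_2$ exists), and the covering we seek has degree $n$.

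The main step is an explicit construction. I would realize $S_{g_1,1}$ as a wedge of $2g_1$ circles thickened to a surface, or more concretely use the standard handle picture: take the boundary circle and attach $g_1$ handles. A clean approach: deformation retract $S_{g_1,1}$ onto a wedge $W$ of $2g_1$ circles, so $\pi_1(S_{g_1,1}) \cong F_{2g_1}$. The boundary curve $\partial S_{g_1,1}$ represents the product of commutators $\prod_{i=1}^{g_1}[x_i, y_i]$ in this free group. Now take the connected $n$-fold cyclic cover $\widetilde{W} \to W$ corresponding to a surjection $F_{2g_1} \to \Z/n\Z$ chosen so that the boundary word, which lies in the commutator subgroup, lifts to $n$ disjoint loops — but I actually want the boundary to lift to a single loop or to a controlled number of loops so that filling back in recovers a once-punctured surface. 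The cleanest choice: use the covering where one generator maps to $1 \in \Z/n\Z$ and the rest map to $0$; then the boundary, being a product of commutators, maps to $0$, so $\partial S_{g_1,1}$ lifts to $n$ disjoint copies. That gives a cover with $n$ boundary circles, not one. To fix this I would instead pass to the surface picture and cut-and-paste: the right construction is the cyclic cover "unrolling" the surface around a non-separating arc, or equivalently take $n$ copies of $S_{g_1,1}$ cut along a properly embedded essential arc and reglue cyclically; the single boundary component of each copy, cut by the arc, reassembles into a single boundary component of the total space. A direct Euler characteristic count then gives $\chi = n\chi(S_{g_1,1})$, the space is connected and has one boundary component, hence is $S_{g_2,1}$ by classification of surfaces, and the construction is manifestly a covering map.

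The hard part will be arranging the gluing so that the resulting cover has exactly \emph{one} boundary component rather than several, since the naive cyclic cover of the spine produces $n$ boundary circles. The key idea to get around this is to choose a properly embedded arc $\alpha$ in $S_{g_1,1}$ with both endpoints on $\partial S_{g_1,1}$ such that $\alpha$ is non-separating and $\partial S_{g_1,1} \setminus \partial\alpha$ together with the two sides of $\alpha$ forms a connected arc-system; cutting along $\alpha$ and cyclically regluing $n$ copies then threads the $n$ boundary arcs into one long boundary circle. I would verify connectedness, count $\chi$, confirm the map is a covering (it is a local homeomorphism and proper, the cyclic group $\Z/n\Z$ acts freely by deck transformations), and then invoke the classification of compact orientable surfaces to identify the total space with $S_{g_2,1}$.
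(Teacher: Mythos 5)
Your proposal has a genuine gap, and it is the very gap you flag as "the hard part." You correctly observe that the naive cyclic cover of the spine produces $n$ boundary circles, because the boundary word $\prod_{i=1}^{g_1}[x_i,y_i]$ is a product of commutators and therefore dies under any homomorphism $F_{2g_1}\to\Z/n\Z$. But your proposed fix --- cut along a properly embedded non-separating arc $\alpha$ and cyclically reglue $n$ copies --- is itself a regular cover with deck group $\Z/n\Z$ (the cyclic permutation of the sheets is a free action with quotient the original surface), so it corresponds to exactly the same kind of homomorphism $\pi_1(S_{g_1,1})\to\Z/n\Z$, and the same obstruction applies. Tracing the boundary concretely confirms this: after cutting, the boundary of the cut-open piece splits into two circles $C^{\pm}$, each consisting of one arc of the old boundary and one copy of $\alpha$; when you reglue $\alpha^{+,(i)}$ to $\alpha^{-,(i+1)}$, the remaining $\beta$-arc of $C^{+,(i)}$ closes up with the $\beta$-arc of $C^{-,(i+1)}$ into a circle using only those two arcs, so the total space again has $n$ boundary components, not one. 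No choice of the arc $\alpha$ can rescue this, because the failure is homological, not combinatorial.

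The paper's proof succeeds precisely by leaving the abelian world. It builds an $n$-fold cover of the wedge of $2g_1$ circles as a labeled graph on $n$ vertices in which one generator $a_1$ acts as an involution with one fixed point (a loop at vertex $0$ and transpositions $(1\,2),(3\,4),\dots$) while every other generator acts as the full $n$-cycle. This cover is not regular and the monodromy is not abelian, which is what allows the commutator word $[a_1,b_1]$ to have nontrivial monodromy --- the paper then verifies by direct computation that the word $[a_1,b_1]^k$ first returns to the base vertex exactly when $k=n$, so the boundary lifts to a single circle covering $\gamma_1$ with degree $n$. To repair your argument you would need to replace the cyclic deck group with such a non-abelian (indeed non-transitive-in-the-regular-sense) monodromy representation, at which point the explicit graph construction in the paper is essentially unavoidable.
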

\begin{proof}
 Let 
\begin{equation*}
 \pi_1(S_{g_1,1}) = \left\langle a_1, b_1, \ldots, a_{g_1}, b_{g_1} | \, \, \,  \right\rangle \cong F \,_{2g_1}
\end{equation*}
be a presentation for the fundamental group of $S_{g_1,1}$. The homotopy class of the boundary element $\gamma_1:S^1 \rightarrow S_{g_1,1}$ corresponds to the element $[a_1,b_1] \ldots [a_{g_1},b_{g_1}] \in \pi_1(S_{g_1,1})$.

We exhibit $\pi_1(S_{g_2,1})$ as an index $n$ subgroup of $\pi_1(S_{g_1,1})$ so that in the corresponding cover, $\gamma_1$ has preimage a single curve that $n$-fold covers $\gamma_1$. 

\begin{figure}[t]
\setlength{\abovecaptionskip}{-75pt}
 \begin{tikzpicture}
 \scalebox{.75}{
\node (1) at (canvas polar cs:angle=0,radius=3.5cm)[circle,draw=black!,fill=blue!20] {$0 $};
\node (2) at (canvas polar cs:angle=51.4,radius=3.5cm)[circle,draw=black!,fill=blue!20]{$1 $};
\node (3) at (canvas polar cs:angle=102.85,radius=3.5cm)[circle,draw=black!,fill=blue!20]{$2 $};
\node (4) at (canvas polar cs:angle=154.2,radius=3.5cm)[circle,draw=black!,fill=blue!20]{$3 $};
\node (5) at (canvas polar cs:angle=205.6,radius=3.5cm)[circle,draw=black!,fill=blue!20]{$4$};
\node (6) at (canvas polar cs:angle=257,radius=3.5cm)[circle,draw=black!,fill=blue!20]{$5 $};
\node (7) at (canvas polar cs:angle=308.4,radius=3.5cm)[circle,draw=black!,fill=blue!20]{$6 $};
\node (Y) at (0,-4.8) {};
   \node (A) at (0,-6.3) {};
   \node (Z) at (0, -8.4)[circle,draw=black!,fill=blue!30] {$\,\,\,\,\,$};
 \path (1) edge[->] node {$a_2$} (2);
 \path (2) edge[->] node {$a_2$} (3);
 \path (3) edge[->] node {$a_2$} (4);
 \path (4) edge[->] node {$a_2$} (5);
 \path (5) edge[->] node {$a_2$} (6);
 \path (6) edge[->, bend right = 7] node {$a_2$} (7);
 \path (7) edge[->] node {$a_2$} (1);
 \path (1) edge[->, bend right = 45] node {$b_1$} (2);
 \path (2) edge[->, bend right = 40] node {$b_1$} (3);
 \path (3) edge[->, bend right = 45] node {$b_1$} (4);
 \path (4) edge[->, bend right = 40] node {$b_1$} (5);
 \path (5) edge[->, bend right = 45] node {$b_1$} (6);
 \path (6) edge[->, bend right = 45] node {$b_1$} (7);
 \path (7) edge[->, bend right = 45] node {$b_1$} (1);
 \path (1) edge[->, bend left = 45] node {$b_2$} (2);
 \path (2) edge[->, bend left = 35] node {$b_2$} (3);
 \path (3) edge[->, bend left = 40] node {$b_2$} (4);
 \path (4) edge[->, bend left = 35] node {$b_2$} (5);
 \path (5) edge[->, bend left = 45] node {$b_2$} (6);
 \path (6) edge[->, bend left = 31] node {$b_2$} (7);
 \path (7) edge[->, bend left = 45] node {$b_2$} (1);
 \path (2) edge[->, bend right=75] node[swap] {$a_1$} (3);
 \path (4) edge[->, bend right=75] node [swap] {$a_1$} (5);
 \path (6) edge[->, bend right=75] node [swap]{$a_1$} (7);
\path (2) edge[<-, bend left=75] node {$a_1$} (3);
 \path (4) edge[<-, bend left=75] node {$a_1$} (5);
 \path (6) edge[<-, bend left=75] node {$a_1$} (7);
 \path (1) edge[out=35, in = 325
                , looseness=0.6, loop
                , distance=2cm, ->]
            node {$a_1$}(1);
   \path (Z) edge[out=25, in = 335
                , looseness=0.6, loop
                , distance=2cm, ->]
            node {$a_1$}(Z);
   \path (Z) edge[out=65, in = 115
                , looseness=0.6, loop
                , distance=2cm, ->]
            node [swap] {$a_2$}(Z); 
 \path (Z) edge[out=155, in = 205
                , looseness=0.6, loop
                , distance=2cm, ->]
            node [swap] {$b_1$}(Z);
 \path (Z) edge[out=245, in = 295
                , looseness=0.6, loop
                , distance=2cm, ->]
            node [swap] {$b_2$}(Z);
   \path (Y) edge[->] node {} (A);          
}
\end{tikzpicture}
\caption[A covering map that realizes $S_{11,1}$ as a $7$-fold cover of $S_{2,1}$]{ {\small A covering map that realizes $S_{11,1}$ as a $7$-fold cover of $S_{2,1}$. } }
\label{coveringmap}
\end{figure}
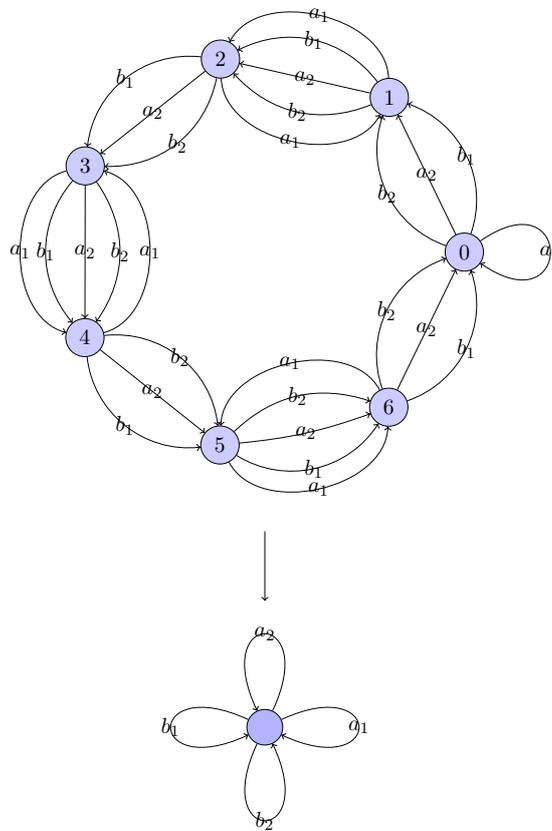

Realize $\pi_1(S_{g_1, 1})$ as the fundamental group of a wedge of $2g_1$ oriented circles labeled by the generating set. Construct an $n$-fold cover of this space as a graph, $\Gamma$, on $n$ vertices labeled $\{0, \ldots, n-1\}$. For every generator besides $a_1$, construct an oriented $n$-cycle on the $n$ vertices with each edge labeled by the generator. Since $\chi(S_{g_1,1})$ and $\chi(S_{g_2,1})$ are both odd, $n$ must be odd as well by Lemma \ref{eulerchar}. Let $\{i, i+1\}$ and $\{i+1,1\}$ be directed edges labeled by $a_1$ for $i<n$ and $i$ odd. Construct a directed loop labeled $a_1$ at vertex $\{0\}$, as illustrated in Figure \ref{coveringmap}. By construction, $\Gamma$ covers the wedge of circles given above. 

To see that $\gamma_1$ has a preimage with one component, choose a vertex $v$ in the graph $\Gamma$ and consider the edge path $p$ with edges labeled $([a_1,b_1]\ldots [a_{g_1},b_{g_1}])^k$, which projects to $\gamma_1$ under the covering map. Then $n$ is the smallest non-zero $k$ for which $p$ terminates at $v$. To see this, note that it suffices to consider the path $p' = [a_1,b_1]^k$ since every other segment $[a_j,b_j]$ returns to its initial vertex. Starting at vertex $\{0\}$, observe that the path $[a_1,b_1]^k$ terminates at the vertex labeled 

 \[
  \begin{cases}
   2k-1 & \text{if } 0<k < \lfloor\frac{n}{2} \rfloor \,\text{mod}\, n\\
   2n-2k    & \text{if } \lfloor \frac{n}{2} \rfloor \leq k <n \,\text{mod} \,n\\
   0  & \text{if} \, k=0\, \text{mod} \,n,
  \end{cases}
\]
proving the claim. 
\end{proof}

{\bf Remark:} Lemma \ref{boundary} may be restated in terms of the {\it Hurwitz realizability problem for branched coverings of surfaces}. In this language, Lemma \ref{boundary} is a special case of \cite[Lemma 7.1]{bogopolskibux}, proved first in \cite{EKS84}, \cite{Hus62}. Lemma \ref{boundary} is included since its proof is new and of independent interest.

In the proof of the characterization of the abstract commensurability classes that contain a maximal element, we will use the following definition. 

\begin{defn}
 If $S_g$ and $S_h$ are closed hyperbolic surfaces, $\gamma$ is a multicurve on $S_g$ and $\rho$ is a multicurve on $S_h$, we say {\it $(S_g, \gamma)$ covers $(S_h,\rho)$} if there exists a covering map $p:S_g \rightarrow S_h$ so that $\gamma$ is the full preimage of $\rho$ in $S_g$. 
\end{defn}
 
 \begin{prop} \label{maximalelement} {\it Let $\mathcal{G} \subset \mathcal{C}_S$ be an abstract commensurability class. 
  \begin{enumerate}[(a)]
   \item There exists a maximal element in $\mathcal{C}_S$ for $\mathcal{G}$ if and only if $\mathcal{G} \subset \mathcal{C}_0$.
   \item If $\mathcal{G} \subset \mathcal{C}_1$, then there exist $G_0, H_0 \in \mathcal{G}$ so that every group in $\mathcal{G}$ is a finite-index subgroup of $G_0$ or $H_0$.
   \item If $\mathcal{G} \subset \mathcal{C}_2$, then there exist $G_0, H_0, K_0, L_0 \in \mathcal{G}$ so that every group in $\mathcal{G}$ is a finite-index subgroup of $G_0$, $H_0$, $K_0$, or $L_0$. 
  \end{enumerate}
   }
 \end{prop}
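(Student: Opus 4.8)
The first move is to turn the algebraic question into a covering-space question. Every $X \in \mathcal{X}_S$ is aspherical (it is a graph of aspherical spaces glued along $\pi_1$-injective circles), so an inclusion $\pi_1(X') \hookrightarrow \pi_1(X)$ as a finite-index subgroup is induced by a finite covering $X' \to X$; conversely, by Theorem \ref{toprigidity} and Corollary \ref{top}, finite covers of spaces in $\mathcal{X}_S$ are detected by their fundamental groups. Hence a maximal element of $\mathcal{G}$ lying in $\mathcal{C}_S$ is precisely the fundamental group of a space $X_0 \in \mathcal{X}_S$ that represents the class and is finitely covered by \emph{every} space representing the class; in particular, by Lemma \ref{eulerchar}, $|\chi(X_0)|$ must divide $|\chi(X)|$ for all such $X$, and $X_0$ must realize this minimal Euler characteristic. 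Statements (b) and (c) ask, likewise, for finitely many such ``floors'' whose covering classes exhaust $\mathcal{G}$.

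\textbf{The constructive half of (a).} Suppose $\mathcal{G} \subset \mathcal{C}_0$. By Corollary \ref{justsep} (equivalently, the reformulation of Theorem \ref{classification} in \S5.1), every space in $\mathcal{G}$ is homeomorphic to four surfaces with one boundary component glued along a single circle, with Euler-characteristic quadruple determined up to a positive integer scale; since each entry is odd, the greatest common divisor is odd and dividing it out yields again a legitimate quadruple of (distinct, negative, odd) Euler characteristics, which I use to build a space $X_0 \in \mathcal{X}_0$ in the class. Given any $X$ in $\mathcal{G}$, its quadruple is an \emph{odd} multiple $k$ of the primitive one, and applying Lemma \ref{boundary} to each of the four pieces (the degrees are the odd number $k$, and the boundary curve lifts to a single curve covering it with degree $k$) produces compatible covers of the four pieces which glue to a degree-$k$ cover $X \to X_0$. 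Thus $\pi_1(X_0)$ is a maximal element.

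\textbf{The remaining cases, and the ``only if'' of (a).} Here I use the reformulation of the classification to enumerate the realizations of $\mathcal{G}$: on a surface $S$, a curve of topological type one may be realized as non-separating, or --- only when $\chi(S) \equiv 2 \pmod 4$ --- as separating $S$ into two equal one-holed pieces; a curve of topological type $\neq 1$ is always separating into two one-holed pieces of a fixed Euler-characteristic ratio. Combining this with the parity facts (one-holed surfaces have odd Euler characteristic, closed surfaces even) and the constraints of Theorem \ref{classification}, one identifies, for $\mathcal{G} \subset \mathcal{C}_1$, two candidate ``minimal'' spaces $X_0, X_1$ (according to which realization is used on the unique type-one side), and for $\mathcal{G} \subset \mathcal{C}_2$, four candidate minimal spaces (a realization choice on each type-one side). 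I then show: (i) every space in $\mathcal{G}$ finitely covers one of these minimal spaces, using the cover-building recipe above together with the cyclic-cover construction employed in Lemma \ref{existcovers} to handle non-separating curves; and (ii) the minimal spaces are pairwise incomparable under finite covers, because each attains the smallest value of $|\chi|$ in $\mathcal{G}$ while being pairwise non-homeomorphic (their singular curves have non-homeomorphic complements, so any finite cover between two of them would be a homeomorphism). From (i) and (ii) we get (b) and (c); and since two incomparable elements cannot both sit inside a common $G_0 \in \mathcal{G}$ as finite-index subgroups, no maximal element exists when $\mathcal{G} \subset \mathcal{C}_1$ or $\mathcal{G} \subset \mathcal{C}_2$, which together with the constructive half completes (a).

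\textbf{Main obstacle.} The hard part is the third step: pinning down exactly which spaces are the minimal ones and verifying that every realization in $\mathcal{G}$ covers one of them. This rests on a careful analysis of the divisibility and mod-$4$ relations among the Euler characteristics of the pieces (in particular, of precisely when the ``separating-into-equal-halves'' realization of a topological-type-one curve is available), and the covers must be assembled so that the piece-covers glue consistently along the singular curve --- which is where Lemma \ref{boundary} (with its parity constraint) and the cyclic-cover construction must be combined with care.
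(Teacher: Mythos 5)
Your proposal is correct and follows essentially the same approach as the paper: translate to covering spaces via topological rigidity (Corollary \ref{top}), reduce the classification to a primitive Euler-characteristic quadruple via Theorem \ref{classification}, build the covers piecewise using Lemma \ref{boundary} (and the cyclic-cover construction for non-separating curves), and enumerate the finitely many realizations of the primitive quadruple in $\mathcal{X}_S$ according to which type-one pieces are realized as non-separating versus equal-halves separating. One small point to tighten: the inference \emph{``two incomparable elements cannot both sit inside a common $G_0 \in \mathcal{G}$ as finite-index subgroups''} is not a general fact (e.g., $2\Z$ and $3\Z$ in $\Z$); here it works because any such $G_0 \cong \pi_1(X_0) \in \mathcal{G}$ would have $|\chi(X_0)|$ both dividing the common minimal value (Lemma \ref{eulerchar}) and at least that value (since $X_0$ realizes a scaled quadruple by Theorem \ref{classification}), forcing degree-one covers and hence homeomorphisms, contradicting that your candidate minimal spaces are pairwise non-homeomorphic --- the paper reaches the same conclusion more directly by observing that a non-separating and a separating amalgamating curve cannot both arise as single-curve preimages of one base curve.
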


 \begin{proof}
  We begin by reformulating the statement of the abstract commensurability classification. Let $G \cong \pi_1(X) \in \mathcal{C}_S$ where $X \in \mathcal{X}_S$. Associate a quadruple $(k_1, k_2, k_3, k_4) \in \Z^4$ to $G$ uniquely as follows. 
    \begin{itemize}
      \item If $X$ is the union of four surfaces $S_i$ each with one boundary component, let $k_i = \chi(S_i)$. 
      \item If $X$ is the union of two surfaces $S_1$ and $S_2$ with one boundary component and a surface $S_3$ with two boundary components, let $k_1 = \chi(S_1)$, $k_2 = \chi(S_2)$, and $k_3 = k_4 = \frac{\chi(S_3)}{2}$. 
      \item  If $X$ is the union of two surfaces $S_1$ and $S_2$ each with two boundary components, let $k_1 = k_2 = \frac{\chi(S_1)}{2}$ and $k_3 = k_4 = \frac{\chi(S_2)}{2}$. 
    \end{itemize}
  Relabel the $k_i$ so that $k_i\leq k_j$ if $i\leq j$. By Theorem \ref{classification}, if $G_1\in \mathcal{C}_S$ yields the quadruple $(k_1, \ldots, k_4)$ and $G_2 \in \mathcal{C}_S$ yields the quadruple $(\ell_1, \ldots, \ell_4)$, then $G_1$ and $G_2$ are abstractly commensurable if and only if there exist integers $K$ and $L$ so that $K(k_1, \ldots, k_4) = L(\ell_1, \ldots, \ell_4)$. In other words, each abstract commensurability class in $\mathcal{C}_S$ is characterized by an equivalence class of ordered quadruples, where two quadruples are equivalent if they are equal up to integer scale. 
  
  Suppose first that $\mathcal{G} \subset \mathcal{C}_0$. The maximal element $G_0$ in $\mathcal{G}$ is the group in the abstract commensurability class which yields the quadruple $(p_1, \ldots, p_4)$ where the $p_i$ have no common integer factor. To see that $G_0$ is a maximal element, let $G \cong \pi_1(X) \in \mathcal{G}$ with $X \in \mathcal{X}_S$. Suppose $G$ yields the quadruple $(k_1, \ldots, k_4)$. Then, since the $p_i$ have no common factor, there exists $D \in \N$ so that $D(p_1, \ldots, p_4) = (k_1, \ldots, k_4)$. Since $\mathcal{G} \subset \mathcal{C}_0$, the group $G_0 \cong \pi_1(X_0)$ where $X_0$ consists of four surfaces $S_i$ each with one boundary component and Euler characteristic $p_i$, and, similarly, $G \cong \pi_1(X)$, where $X$ consists of four surfaces $T_i$ each with one boundary component and Euler characteristic $k_i = Dp_i$ for $1\leq i \leq 4$. By Lemma \ref{coveringmap}, $X$ $D$-fold covers $X_0$, so $G$ is a finite-index subgroup of $G_0$ as desired. 
  
  To complete the proof of claim (a), observe that if $\mathcal{G} \not\subset \mathcal{C}_0$, then there are two groups, $H_1$ and $H_2$, in $\mathcal{G}$, where $H_1 \cong \pi_1(S_{h_1}) *_{\la \gamma \ra} \pi_1(S_{h_1'})$ and $H_2 \cong \pi_1(S_{h_2}) *_{\la \rho \ra} \pi_1(S_{h_2'})$ and, up to relabeling, $\gamma \mapsto [\gamma_{h_1}]\in  \pi_1(S_{h_1})$ and $\rho \mapsto [\gamma_{h_2}] \in \pi_1(S_{h_2})$, where $\gamma_{h_1}$ is an essential non-separating simple closed curve and $\gamma_{h_2}$ is a separating simple closed curve. Thus, $(S_{h_1}, \gamma_{h_1})$ and $(S_{h_2}, \gamma_{h_2})$ cannot cover the same pair $(S, \gamma)$, so there is no maximal element in the abstract commensurability class of $G$ in $\mathcal{C}_S$.
  
  Suppose now that $\mathcal{G} \subset \mathcal{C}_1$. Then the groups $G_0$, $H_0 \in \mathcal{G}$ are the two groups in the abstract commensurability class that yield the same quadruple $(p_1, \ldots, p_4)$ where the $p_i$ have no common integer factor. More specifically, since $\mathcal{G} \in \mathcal{C}_1$, $p_i = p_j$ for some $i \neq j$. Let $G_0 \cong \pi_1(X_0)$, where $X_0 \in \mathcal{C}_S$ consists of four surfaces each with one boundary component and Euler characteristic $p_i$. Let $H_0 = \pi_1(Y_0)$, where $Y_0 \in \mathcal{X}_S$ consists of one surface $S$ with Euler characteristic $2p_i$ glued along a non-separating curve $\gamma$ to two surfaces each with one boundary component and Euler characteristic $p_m$ and $p_{\ell}$, respectively, for $m,\ell \neq i, j$. Let $G \in \mathcal{G}$ so $G \cong \pi_1(X)$ and $G$ yields the quadruple $(k_1, \ldots, k_4) = D(p_1, \ldots, p_4)$ for some $D \in \N$. Since $G \in \mathcal{C}_1$, either $X$ consists of four surfaces each with one boundary component, so $G$ is a finite-index subgroup of $G_0$ as before, or, $X$ consists of one surface $T$ with Euler characteristic $2Dp_i$ glued along a non-separating curve $\rho$ to two surfaces each with one boundary component and Euler characteristic $Dp_m$ and $Dp_{\ell}$, respectively. Since there is a (cyclic) $D$-fold cover of $(S, \gamma)$ by $(T, \rho)$, the space $X$ $D$-fold covers $Y_0$ in this case, completing the proof of claim (b). 
  
  Finally, suppose $\mathcal{G} \in \mathcal{C}_2$. In this case, the groups $G_0, H_0, K_0, L_0 \in \mathcal{G}$ are the four groups in the abstract commensurability class that yield the same quadruple $(p_1, \ldots, p_4)$ where the $p_i$ have no common integer factors. Since $\mathcal{G} \subset \mathcal{C}_2$, $p_1 = p_2$ and $p_3 = p_4$. Let $G_0 \cong \pi_1(X_0)$ where $X_0 \in \mathcal{X}_S$ consists of four surfaces $S_i$ each with one boundary component and Euler characteristic $p_i$ for $1 \leq i \leq 4$. Let $H_0 \cong \pi_1(Y_0)$, where $Y_0 \in \mathcal{X}_S$ consists of a surface with Euler characteristic $2p_1$ glued along a non-separating curve to two surfaces each with one boundary component and Euler characteristic $p_3$. Let $K_0 \cong \pi_1(Z_0)$, where $Z_0 \in \mathcal{X}_S$ consists of a surface with Euler characteristic $2p_3$ glued along a non-separating curve to two surfaces each with one boundary component and Euler characteristic $p_1$. Finally, let $L_0 \cong \pi_1(W_0)$, where $W_0 \in \mathcal{X}_S$ consists of a surface with Euler characteristic $2p_1$ and a surface with Euler characteristic $2p_3$ glued to each other along a non-separating curve in each. As above, if $G \cong \pi_1(X) \in \mathcal{G}$ and $X \in \mathcal{X}$, then $X$ finitely covers one of $X_0$, $Y_0$, $Z_0$, or $W_0$, depending on the non-separating curves in $X$, which concludes the proof of (c).  \end{proof}

\subsection{Right-angled Coxeter groups and the Crisp--Paoluzzi examples}

In this section, we discuss the relationship between groups in $\mathcal{C}_S$ and the class of right-angled Coxeter groups. We begin with the relevant background for this section. 


\begin{defn} Let $\Gamma$ be a finite simplicial graph. The {\it right-angled Coxeter group with defining graph $\Gamma$} is 
  \begin{center}
     $W(\Gamma)  = \la v \in V(\Gamma) \, | \, v^2 = 1 \text{ if } v \in V(\Gamma), \, [v,w] = 1 \text{ if } \{v,w\} \in E(\Gamma) \ra$.
  \end{center}
\end{defn}

 For more on right-angled Coxeter groups, see \cite{davis}. As shown in \cite{green}, a right-angled Coxeter group is defined up to isomorphism by its defining graph; that is, $W(\Gamma) \cong W(\Gamma')$ if and only $\Gamma \cong \Gamma'$. Often, group theoretic properties of $W(\Gamma)$ correspond to graph theoretic properties of $\Gamma$. Classic results relevant to our setting are recorded below. 

 \begin{prop}  Let $\Gamma$ be a simplicial graph. 
 \begin{enumerate}
 \item \cite[Pg. 123]{gromov} The group $W(\Gamma)$ is word-hyperbolic if and only if every $4$-cycle in $\Gamma$ has a chord.
 \item \cite[Lemma 8.7.2]{davis} The group $W(\Gamma)$ is one-ended if and only if $\Gamma$ is not a complete graph and there does not exist a complete subgraph $K$ of $\Gamma$ such that $\Gamma \backslash K$ is disconnected. 
 \end{enumerate}
 \end{prop}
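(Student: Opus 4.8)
Both statements are classical (due to Gromov and Davis respectively), so the plan is to indicate, for each, how one direction follows from an explicit construction inside $W(\Gamma)$ and the other from a structural criterion for Coxeter groups.

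For the word-hyperbolicity statement (1), I would first dispatch the ``only if'' direction by hand: if $a,b,c,d$ is a $4$-cycle of $\Gamma$ with no chord, then $\{a,c\}$ and $\{b,d\}$ are non-edges, so $\langle a,c\rangle$ and $\langle b,d\rangle$ are each infinite dihedral, and each of $a,c$ commutes with each of $b,d$; hence the special subgroup $\langle a,b,c,d\rangle$ is $D_\infty\times D_\infty\supset\Z^2$, and a group containing $\Z^2$ is not word-hyperbolic. For the ``if'' direction I would appeal to Moussong's hyperbolicity criterion: a Coxeter system $(W,S)$ fails to be word-hyperbolic exactly when $S$ contains a subset generating an irreducible affine Coxeter group of rank $\geq 3$, or two disjoint subsets generating commuting infinite special subgroups. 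In the right-angled case the first alternative is vacuous --- every irreducible affine Coxeter diagram of rank $\geq 3$ carries an edge label in $\{3,4,6\}$ --- and the second holds precisely when $\Gamma$ contains disjoint non-adjacent pairs $\{a,c\}$, $\{b,d\}$ with each vertex of one pair adjacent to each vertex of the other, i.e. precisely when $\Gamma$ has a chordless $4$-cycle. So ``every $4$-cycle has a chord'' is equivalent to the absence of both obstructions, hence to word-hyperbolicity.

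For the one-endedness statement (2), I would start from the observation that $W(\Gamma)$ is finite (so has $0$ ends) exactly when $\Gamma$ is complete, and otherwise $W(\Gamma)$ is infinite. If a complete subgraph $K$ separates $\Gamma$, write $V(\Gamma)=A\sqcup K\sqcup B$ with $A,B$ nonempty and no $A$--$B$ edges; then $W(\Gamma)\cong W(\Gamma[A\cup K])*_{W(K)}W(\Gamma[B\cup K])$ is a nontrivial amalgam over the finite group $W(K)$, so by Stallings' ends theorem $W(\Gamma)$ has more than one end. Conversely, if $W(\Gamma)$ is infinite but not one-ended, then by Stallings it splits nontrivially over a finite subgroup; since every finite subgroup of $W(\Gamma)$ lies in a conjugate of a special subgroup $W(K)$ with $K$ a clique, and since such a splitting of a Coxeter group can be taken to be visual, the splitting is realized by a separating clique. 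Assembling the two directions gives the stated equivalence.

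The step I expect to be the main obstacle is the ``if'' direction of (1): turning Moussong's general criterion into the clean combinatorial statement requires checking (a) that there is genuinely no right-angled irreducible affine Coxeter group of rank $\geq 3$, and (b) that any two disjoint commuting infinite special subgroups can be cut down to a $K_{2,2}$, i.e. a chordless $4$-cycle. The analogous soft point in (2) is the claim that a splitting of a Coxeter group over a finite subgroup may be chosen visual. Both facts are standard and contained in the cited references, but a self-contained treatment would need to spell them out.
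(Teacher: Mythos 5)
The paper does not actually prove this proposition; it simply cites the two statements (Gromov, p.~123, and Davis, Lemma~8.7.2), so there is no ``paper's own proof'' to compare against. Your sketch is correct and is essentially the standard way these facts are derived. A few remarks on how your route relates to the cited sources and where the weight lies.

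For (1), the ``only if'' direction via the explicit $D_\infty\times D_\infty\supset\Z^2$ is exactly right and is self-contained. For the ``if'' direction you invoke Moussong's criterion; this is the standard modern justification of Gromov's observation. The two reductions you flag --- (a) there is no irreducible affine right-angled Coxeter diagram of rank $\geq 3$ (in the right-angled world all labels are $2$ or $\infty$, while every irreducible affine diagram of rank $\geq 3$ has a label in $\{3,4,6\}$), and (b) a pair of disjoint commuting infinite right-angled special subgroups yields a chordless $4$-cycle (each infinite factor must contain two non-adjacent generators, and every generator of one factor commutes with every generator of the other, producing an induced $K_{2,2}$) --- are both straightforward once stated, so the argument is complete modulo Moussong's theorem itself, which is a genuinely substantial input but is of course standard.

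For (2), your route differs from the one Davis takes in his Lemma 8.7.2, which is proved by computing the number of ends from the topology of the Davis complex and its nerve rather than by appealing to Stallings. Your argument via Stallings is a perfectly valid alternative and arguably more self-contained for a reader not already working with the Davis complex; the one nontrivial ingredient you correctly flag is that a splitting of a Coxeter group over a finite subgroup can be realized visually, which is a theorem of Mihalik--Tschantz (``Visual decompositions of Coxeter groups''), and you should cite it rather than leave it as a remark. You should also note, as you implicitly do, that $W(\Gamma)$ is finite exactly when $\Gamma$ is complete, so the zero-ends case is disposed of first; with that in place, the equivalence ``more than one end $\Leftrightarrow$ nontrivial splitting over a finite subgroup $\Leftrightarrow$ separating clique in $\Gamma$'' closes the loop. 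In short: your proof is correct, takes a slightly different (Stallings-based) path to (2) than Davis's own, and the only genuine black boxes are Moussong's criterion and the visual splitting theorem, both of which you have identified.
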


  An {\it orbifold} is a topological space $\mathcal{O}$ in which each point has a neighborhood modeled on $\tilde{U}/G$, where $\tilde{U}$ is an open ball in $\R^n$ and $G$ is a finite subgroup of $SO(n)$.  Associated to each point in the orbifold is the finite group $G$ called its {\it isotropy group}. A point is called a {\it ramification point} if its isotropy group is non-trivial. The set of all ramification points is called the {\it ramification locus} of the orbifold. The underlying topological space of an orbifold $\mathcal{O}$ is denoted $|\cO|$. Background and a more formal definition of orbifolds can be found in \cite[Chapter 6]{kapovich} and \cite[Chapter 13]{ratcliffe};  recent applications for commensurability can be found in the survey paper \cite{walsh}.
 
 A {\it homeomorphism} between orbifolds $\cO$ and $\mathcal{R}$ is a homeomorphism $h: |\cO| \rightarrow |\mathcal{R}|$ such that for each point $x \in \cO$, $y = h(x) \in \mathcal{R}$, there are coordinate neighborhoods $U_x = \tilde{U}_x/G_x$ and $V_y = \tilde{V}_y/G_y$ such that $h$ lifts to an equivariant homeomorphism $\tilde{h}_{xy}:\tilde{U}_x \rightarrow \tilde{V}_y$. An {\it orbi-complex} is a disjoint union of orbifolds identified to each other along homeomorphic suborbifolds. 
 
 An {\it orbifold covering} $p: \mathcal{O}' \rightarrow \mathcal{O}$ is a continuous map $|\mathcal{O}'| \rightarrow |\mathcal{O}|$ such that if $x \in \mathcal{O}$ is a ramification point with neighborhood given by $U = \tilde{U}/G$, then each component $V_i$ of $f^{-1}(U)$ is isomorphic to $\tilde{U}/G_i$ where $G_i \leq G$ and $p|_{V_i}:V_i \rightarrow U$ is $\tilde{U}/G_i \rightarrow \tilde{U}/G$. The {\it universal covering} $p:\tilde{\mathcal{O}} \rightarrow \mathcal{O}$ is a covering such that for any other covering $p':\mathcal{O}' \rightarrow \mathcal{O}$ there exists a covering $\tilde{p}:\tilde{\mathcal{O}} \rightarrow \mathcal{O}'$ such that $p' \circ \tilde{p} = p$. The group of {\it deck transformations} of the orbifold covering $p: \mathcal{O}' \rightarrow \mathcal{O}$ is the group of self-diffeomorphisms $h:\mathcal{O}' \rightarrow \mathcal{O}'$ such that $p \circ h = p$. The {\it orbifold fundamental group}, $\pi_1^{orb}(\mathcal{O})$, is the group of deck transformations of its universal covering. Then $\mathcal{O} = \tilde{\mathcal{O}} / \pi_1^{orb}(\mathcal{O})$. The orbifold $\cO$ is called a {\it reflection orbifold} if $\pi_1(\cO)$ is generated by reflections. The orbifold fundamental group can also be defined based on homotopy classes of loops in $\cO$; this definition appears in \cite[Chapter 13]{ratcliffe}. A form of the Seifert-Van Kampen theorem allows one to compute the fundamental group of orbifolds; see Section 2 of \cite{scott}.

  \begin{figure}[t]
   \includegraphics[height=6.0cm]{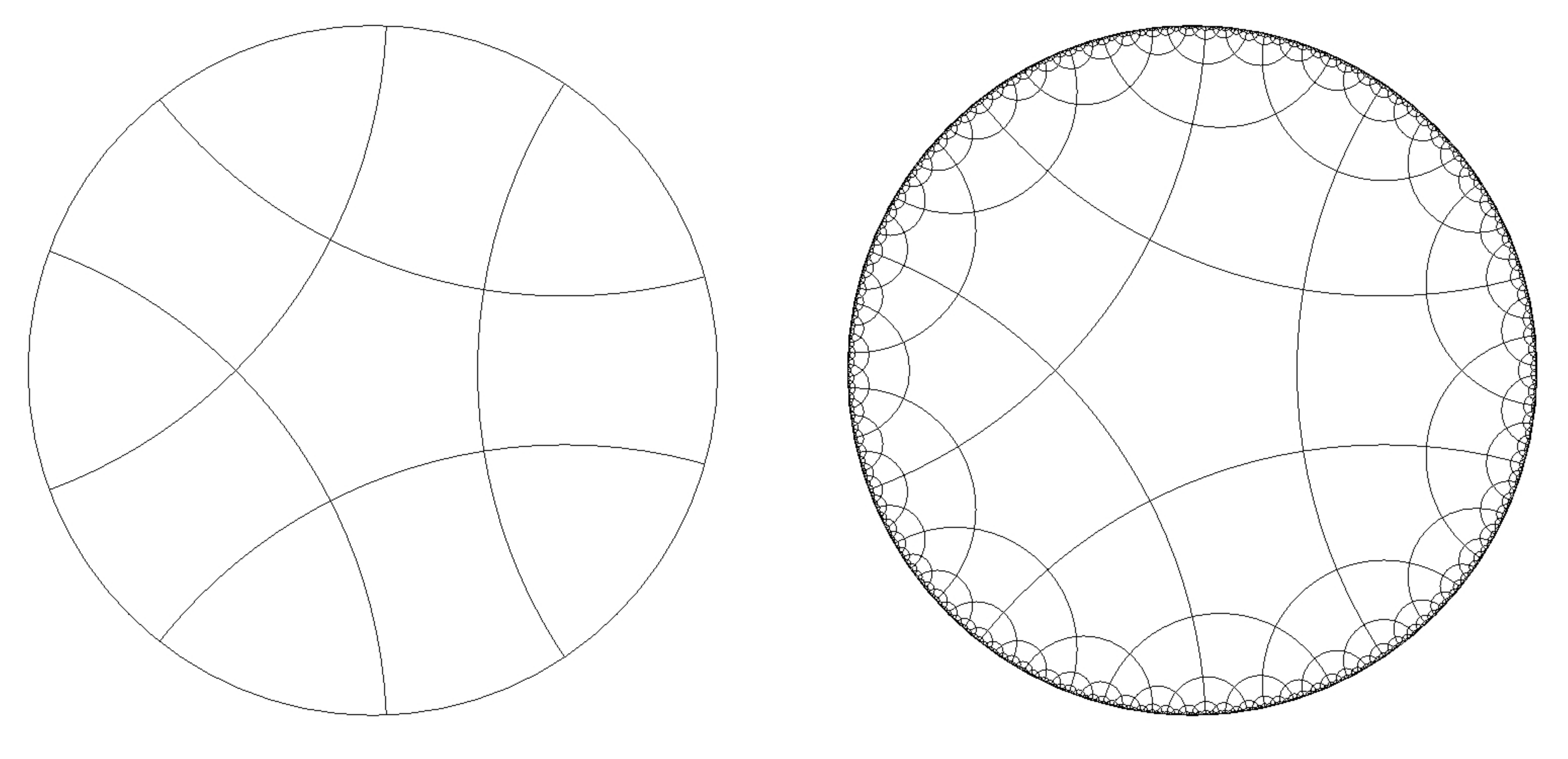}
   \caption[The orbit of a pentagon under the action of a right-angled Coxeter group on the hyperbolic plane]{ {\small On the left are five geodesic lines in the disk model of the hyperbolic plane; on the right, is their orbit under the action of the right-angled Coxeter group $W_5$. Both figures were drawn with Curt McMullen's lim program \cite{mcmullen}.  } }
  \label{pentagons}
 \end{figure}
  
 Hyperbolic surfaces finitely cover reflection orbifolds, so hyperbolic surface groups are finite-index subgroups of right-angled Coxeter groups. More specifically, let $W_n$ be the right-angled Coxeter group with defining graph an $n$-cycle. If $n \geq 5$, $W_n$ acts geometrically on the hyperbolic plane: $W_n$ is isomorphic to the group generated by reflections about the geodesic lines through the $n$-sides of a right-angled hyperbolic $n$-gon. One such example is given in Figure \ref{pentagons}. Let $\mathcal{O}_n$ denote the quotient of the hyperbolic plane under the action of $W_n$ so $\pi_1^{orb}(\mathcal{O}_n) \cong W_n$. Every closed orientable surface of genus greater than one finitely covers $\mathcal{O}_5$ (for example, see \cite{scott78}), so $\pi_1(S_g)$ is a finite-index subgroup of $W_5$ for $g \geq 2$. 
 
 As orbifolds, $\mathcal{O}_n$ and $\mathcal{O}_m$ may be identified to each other along homeomorphic suborbifolds to form an orbi-complex. If the suborbifolds each have underlying space a geodesic segment that meets the boundary edges of the reflection orbifolds at right angles, then the orbi-complex obtained has orbifold fundamental group a right-angled Coxeter group. There are two homeomorphism types of such suborbifolds of $\mathcal{O}_n$: a reflection edge and the geodesic segment that connects the interior of reflection edges that are separated from each other by at least two reflection edges on either side. 
 
 The orbi-complex obtained by identifying $\mathcal{O}_n$ and $\mathcal{O}_m$ along a reflection edge in each is denoted $\mathcal{O}_{m,n}$. The orbifold fundamental group of $\mathcal{O}_{m,n}$ is the right-angled Coxeter group $W_{m,n}$ introduced by Crisp--Paoluzzi in \cite{crisppaoluzzi}, and is defined as follows. 
 
 \begin{defn} \cite{crisppaoluzzi} For $m,n \geq 5$, define $W_{m,n} = W(\Gamma_{m,n})$, where $\Gamma_{m,n}$ denotes the graph which consists of a circuit of length $m$ and a circuit of length $n$ identified along a common subpath of edge-length $2$.  
 \end{defn}

 {\bf Remark:}  Our notation for $W_{m,n}$ varies slightly from that given in \cite{crisppaoluzzi}; they define $\Gamma_{m,n}$ as the graph which consists of a circuit of length $m+4$ and a circuit of length $n+4$ identified along a common subpath of edge-length $2$ and $m,n \geq 1$. One can easily translate between the two notations. 
 
 On the other hand, the orbi-complex obtained by identifying $\mathcal{O}_n$ and $\mathcal{O}_m$ along geodesics connecting reflection edges separated from each other by at least two reflection edges on either side can also be viewed as the union of four right-angled reflection orbifolds with one boundary edge identified to each other along their boundary edges. The orbifold fundamental group of each component orbifold with boundary is $P_n$, the right-angled Coxeter group with underlying graph a path of length $n$ for some $n \geq 4$. More specifically, for $n \geq 4$, $P_n$ acts properly discontinuously by isometries on the hyperbolic plane by reflecting about $n$ geodesic lines, whose intersection graph is a path of length $n$ and so that the intersecting lines meet at right angles; an example is illustrated in Figure \ref{p4}. The quotient of the hyperbolic plane under the group $P_n$ is an open infinite-area right-angled hyperbolic reflection orbifold. Truncate this space along the unique geodesic in the homotopy class of the boundary to obtain the orbifold $\mathcal{O}_{n,1}$, a compact orbifold with boundary and $\pi_1^{orb}(\mathcal{O}_{n,1}) = P_n$. 
   
    \begin{figure}[t]
   \includegraphics[height=6.0cm]{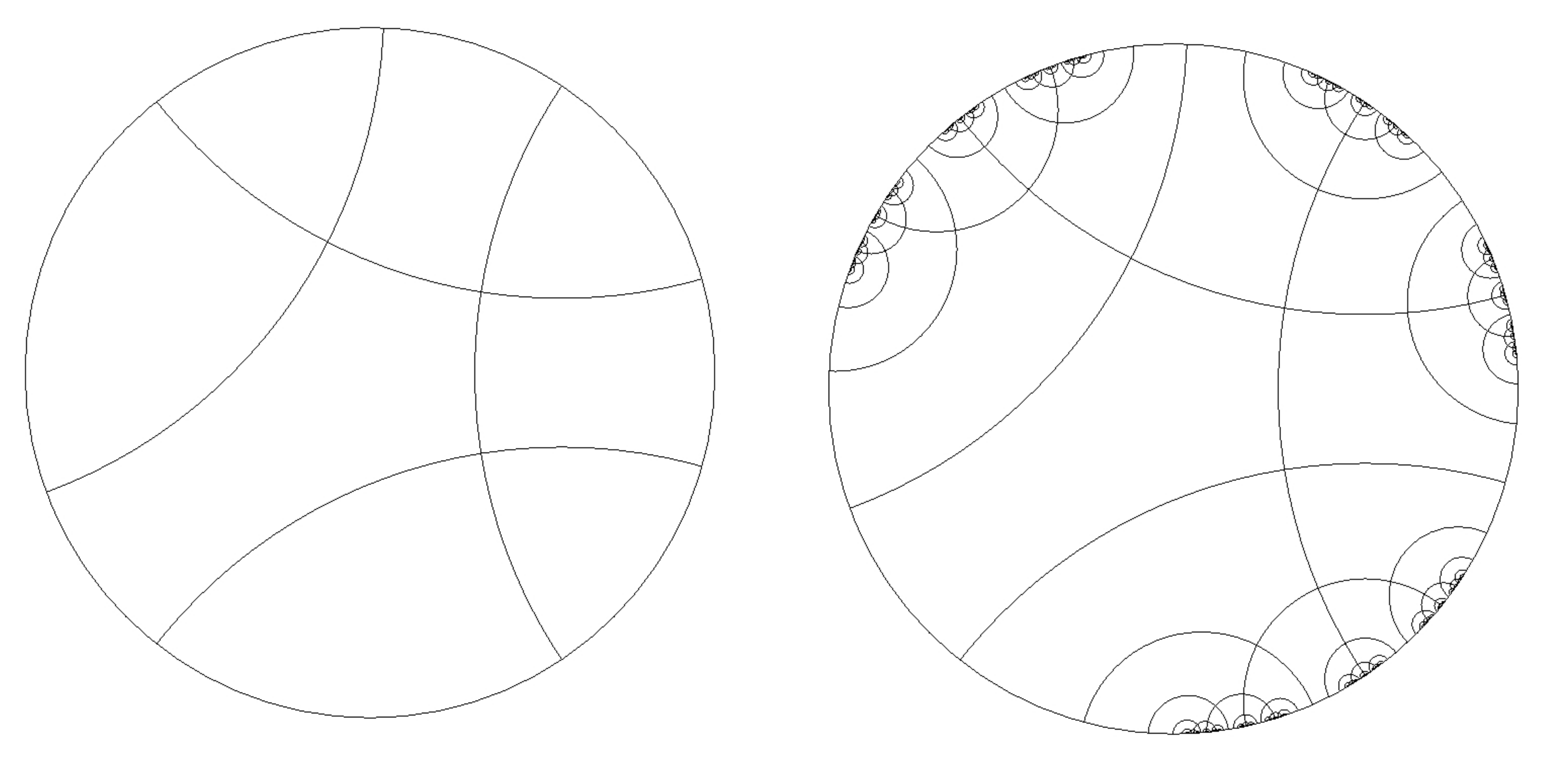}
   \caption[The orbit of four lines under the action of a right-angled Coxeter group on the hyperbolic plane]{ {\small On the left are four geodesic lines in the disk model of the hyperbolic plane; on the right, is their orbit under the action of the right-angled Coxeter group with underlying graph a path of length four. Both figures were drawn with Curt McMullen's lim program \cite{mcmullen}.  } }
  \label{p4}
 \end{figure} 
 
  For $n_i \geq 4$, the orbifolds $\mathcal{O}_{n_1,1}, \ldots, \mathcal{O}_{n_4,1}$ may be identified along their boundary curves to form an orbi-complex we denote $\mathcal{O}(n_1, \ldots, n_4)$. The orbifold fundamental group of the orbi-complex $\mathcal{O}(n_1, \ldots, n_4)$ is the right-angled Coxeter group with underlying graph denoted $\Theta(n_1, \ldots, n_4)$ that consists of four paths of length $n_i \geq 4$ glued to each other along their endpoints. The graphs $W_{m,n}$ and $\Theta(n_1, \ldots, n_4)$ are examples of {\it generalized $\Theta$-graphs}, which were introduced by Dani--Thomas in \cite{danithomas}, and which are defined more formally below. 
 
 \begin{defn} \label{gentheta}
  Let $k \geq 3$, $n_1 \geq 3$ and $n_2, \ldots, n_k \geq 4$ be integers. Let $\Psi_k$ be the graph with two vertices $a$ and $b$ and $k$ edges $e_1, \ldots, e_k$ connecting the vertices $a$ and $b$. The {\it generalized $\Theta$-graph} $\Theta(n_1, \ldots, n_k)$ is obtained by subdividing the edge $e_i$ of $\Psi_k$ into $n_i-1$ edges by inserting $n_i-2$ new vertices along $e_i$ for $1 \leq i \leq n$. 
 \end{defn}
 
 {\bf Remark:} Each right-angled Coxeter group with defining graph a generalized $\Theta$-graph is the orbifold fundamental group of a right-angled hyperbolic reflection orbi-complex of one of two types that generalize the orbi-complexes described above. That is, if $n_1 = 3$, the associated orbi-complex is similar to $\mathcal{O}_{m,n}$: it consists of $k-1$ right-angled hyperbolic reflection orbifolds identified to each other along a reflection edge in each. If $n_1 >3$, the associated orbi-complex is similar to $\mathcal{O}(n_1, \ldots, n_4)$: it consists of $k$ right-angled hyperbolic reflection orbifolds with boundary identified to each other along their boundary edges. In upcoming joint work with Pallavi Dani and Anne Thomas, we characterize the abstract commensurability classes in these settings. 
 
 {\bf Remark:} In this section, we prove that the fundamental group of two surfaces identified along separating curves is a finite-index subgroup of a right-angled Coxeter group with defining graph $\Theta(n_1, \ldots, n_4)$ for $n_i \geq 4$. We prove the fundamental group of two surfaces identified along curves of topological type one (see Definition \ref{toptype}) is a finite-index subgroup of the right-angled Coxeter group $W_{m,n}$ with defining graph $\Theta(3, n_1, n_2)$ and $n_i \geq 4$. It remains open whether the fundamental group of the union of two surfaces obtained by gluing a non-separating curve to a curve that separates the surface into two subsurfaces of unequal genus is a finite-index subgroup of a right-angled Coxeter group. 
  
 Using the following lemma, we prove that in every abstract commensurability class of a group in $\mathcal{C}_S$ there is a group that is a finite-index subgroup of a right-angled Coxeter group with underlying graph $\Theta(n_1, \ldots, n_4)$ and $n_i \geq 4$. 

 \begin{lemma} \label{firacg} {\it If $S_1, \ldots, S_k$ are orientable hyperbolic surfaces with one boundary component, identified to each other along their boundary components to form the space $X$, then $\pi_1(X)$ is a finite-index subgroup of a right-angled Coxeter group.   
   }
 \end{lemma}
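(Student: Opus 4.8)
The plan is to exhibit $X$ as a finite-sheeted orbifold covering of an orbi-complex whose orbifold fundamental group is one of the right-angled Coxeter groups $W(\Theta(n_1,\dots,n_k))$, and then read off the finite-index inclusion. Write $S_i\cong S_{g_i,1}$; since $\chi(S_i)<0$ we have $g_i\geq 1$, and I set $n_i:=2g_i+2\geq 4$. Recall from the discussion above that $P_{n_i}$ is the orbifold fundamental group of the compact right-angled reflection orbifold $\mathcal{O}_{n_i,1}$ — a disk with $n_i$ mirror edges, one boundary edge, and $n_i-1$ order-two corner reflectors — and that its peripheral subgroup, associated with the boundary edge that runs between the mirrors $\ell_1$ and $\ell_{n_i}$, is $\langle s_1,s_{n_i}\rangle\cong D_\infty$, generated by the reflections in those two mirrors. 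I would first prove the local statement: \emph{for every $g\geq 1$ the surface $S_{g,1}$ is a degree-four orbifold cover of $\mathcal{O}_{2g+2,1}$, and the induced covering of boundary $1$-orbifolds is independent of $g$.}

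For the local statement, define $\phi_i\colon P_{n_i}\twoheadrightarrow(\Z/2\Z)^2$ by sending the $j$-th generator $s_j$ (in path order) to the nontrivial element $a$ if $j$ is odd and to $b$ if $j$ is even, where $(\Z/2\Z)^2=\{0,a,b,a+b\}$. Then (i) $H_i:=\ker\phi_i$ is torsion-free: every torsion element of the Coxeter group $P_{n_i}$ is conjugate into one of the finite standard parabolic subgroups $\langle s_j\rangle$ or $\langle s_j,s_{j+1}\rangle$ — the cliques of a path being its vertices and edges — and $\phi_i$ is injective on each of these, since adjacent generators are sent to distinct nontrivial elements. (ii) Because $n_i$ is even, $\phi_i(s_1)=a$ and $\phi_i(s_{n_i})=b$, so $\phi_i$ carries the peripheral subgroup $\langle s_1,s_{n_i}\rangle$ onto $(\Z/2\Z)^2$; hence in the degree-four cover $\Sigma_i\to\mathcal{O}_{n_i,1}$ associated with $H_i$ the full preimage of the orbifold boundary is connected, so the surface $\Sigma_i$ (a manifold, as $H_i$ is torsion-free) has a single boundary circle. (iii) $\chi(\Sigma_i)=4\,\chi^{orb}(\mathcal{O}_{n_i,1})$; using $\chi^{orb}(\mathcal{O}_{n,1})=\tfrac{3-n}{4}$ — which one checks, e.g., by doubling $\mathcal{O}_{n,1}$ along its boundary edge to obtain $\mathcal{O}_{2n-2}$ and applying $\chi^{orb}(\mathcal{O}_{m})=1-\tfrac m4$ — we get $\chi(\Sigma_i)=3-n_i=1-2g_i$, whence $\Sigma_i\cong S_{g_i,1}=S_i$. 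Moreover, the restriction of $\phi_i$ to the peripheral $D_\infty$ is literally the same homomorphism ($r_0\mapsto a$, $r_1\mapsto b$) for every $i$, so the induced coverings of the boundary $1$-orbifolds all agree.

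Next I would assemble the covering. Let $\mathcal{O}:=\mathcal{O}(n_1,\dots,n_k)$ be the orbi-complex obtained by gluing $\mathcal{O}_{n_1,1},\dots,\mathcal{O}_{n_k,1}$ along their boundary edges; as recorded above $\pi_1^{orb}(\mathcal{O})=W(\Theta(n_1,\dots,n_k))$, a right-angled Coxeter group (the labels can be ordered so that $\Theta(n_1,\dots,n_k)$ is a generalized $\Theta$-graph, all $n_i\geq 4$). Since the coverings $\Sigma_i\to\mathcal{O}_{n_i,1}$ all restrict to the same covering of the common boundary $1$-orbifold, they glue to a degree-four orbifold covering $q\colon X'\to\mathcal{O}$, where $X'$ is obtained by identifying $\Sigma_1,\dots,\Sigma_k$ along their boundary circles; hence $\pi_1(X')$ has index four in $W(\Theta(n_1,\dots,n_k))$. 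It remains to identify $X'$ with $X$: both are formed by identifying the $k$ surfaces $S_{g_1,1},\dots,S_{g_k,1}$ along their single boundary circles, and any two such spaces are homeomorphic, because a self-homeomorphism of $S_{g,1}$ realizes every self-homeomorphism of its boundary circle up to isotopy (in particular a reflection, via an orientation-reversing self-homeomorphism), so the homeomorphism type does not depend on the gluing maps. Therefore $\pi_1(X)\cong\pi_1(X')$ is a finite-index subgroup of a right-angled Coxeter group. (The case $k=2$ is classical: $X$ is then a closed orientable surface of genus at least two, whose fundamental group is finite-index in $W_5$.)

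The main obstacle is the local statement of the second paragraph: one must produce, uniformly over all bounded surfaces at once, a finite cover of a right-angled reflection orbifold that behaves identically along the boundary, so that the pieces reassemble into a cover of the glued orbi-complex. The choices that make this work — forcing $n_i$ even so that $\phi_i$ is onto on the peripheral $D_\infty$, and the alternating coloring $\phi_i$ so that it is injective on every clique of the path — have to be coordinated, and some care is needed to confirm that the resulting map is a genuine orbifold covering (every corner reflector unwrapped) rather than a branched cover, which is exactly what torsion-freeness of $H_i$ guarantees.
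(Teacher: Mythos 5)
Your proof is correct and takes a genuinely different route from the paper. The paper produces the degree-four orbifold cover $S_i\to\mathcal{O}_{n_i,1}$ by an explicit geometric construction: skewer $S_i$ through its boundary so that $2g_i+1$ interior points meet the axis, rotate by $\pi$ to obtain a disk with $2g_i+1$ cone points of order two arranged on a diameter, then reflect across the diameter; the uniformity of this recipe over all $i$ makes the boundary coverings match automatically, and the covers glue. You instead build the same cover algebraically, as the kernel $H_i$ of an alternating $(\Z/2\Z)^2$-coloring $\phi_i$ of the path Coxeter group $P_{n_i}$, verifying torsion-freeness via the clique criterion and then identifying the resulting surface by its Euler characteristic and boundary count; the fact that all $\phi_i$ literally agree on the common peripheral $D_\infty$ (which requires your observation that $n_i=2g_i+2$ is even) is exactly the boundary-coordination step that the paper leaves implicit in its figure. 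Your version is more systematic and makes the gluing compatibility transparent, while the paper's is shorter and more visual. One small detail to add: concluding $\Sigma_i\cong S_{g_i,1}$ from $\chi(\Sigma_i)=1-2g_i$ and a single boundary circle requires $\Sigma_i$ to be orientable, i.e.\ $H_i$ must lie in the orientation-preserving (rotation) subgroup of $P_{n_i}$. This holds because the sign homomorphism $\epsilon\colon P_{n_i}\to\Z/2\Z$, $s_j\mapsto 1$, factors through $\phi_i$ via the map $(\Z/2\Z)^2\to\Z/2\Z$ sending both $a$ and $b$ to $1$, so $\ker\phi_i\subset\ker\epsilon$; it would be worth stating this explicitly.
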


 \begin{proof}
  We prove $X$ four-fold covers the reflection orbi-complex $\mathcal{O}(n_1, \ldots, n_k)$ for some $n_i \geq 4$ whose orbifold fundamental group is a right-angled Coxeter group with underlying graph the generalized $\Theta$-graph $\Theta(n_1, \ldots, n_k)$.

  \begin{figure}[t]
   \includegraphics[height=6.7cm]{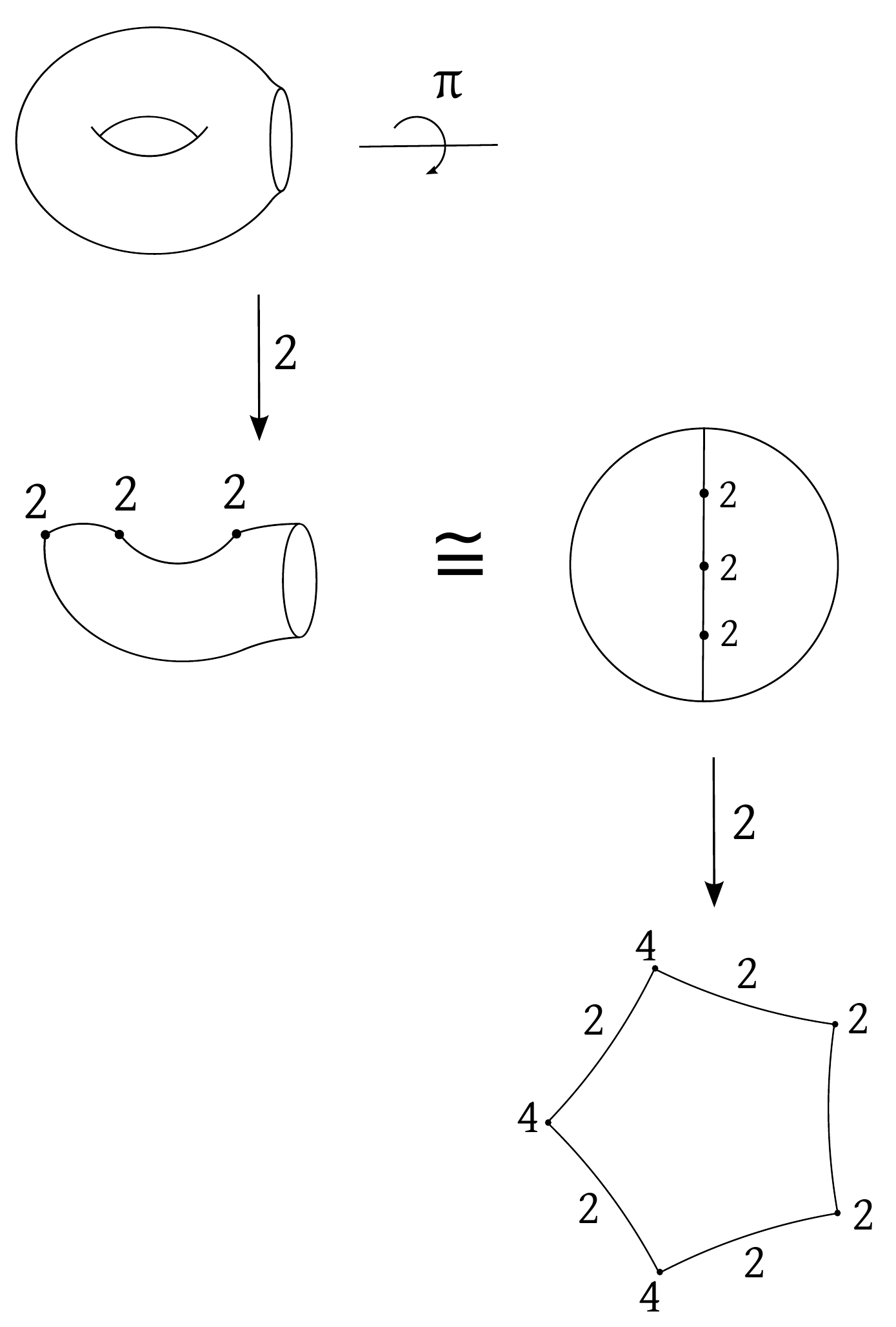}
   \caption[A $4$-fold cover of an orbifold by the surface $S_{1,1}$]{ {\small Illustrated above is a $4$-fold cover of the orbifold $\mathcal{O}_{4,1}$ by the surface with boundary $S_{1,1}$.} }
  \label{pnorbi}
 \end{figure}

  The surface with boundary $S_i \subset X$ four-fold covers $\mathcal{O}_{n_i, 1}$ for some $n_i \geq 4$ such that the boundary of $S_i$ four-fold covers the boundary edge of $\mathcal{O}_{n_i,1}$ as illustrated in Figure \ref{pnorbi}. To see this, skewer $S_i$ through its boundary component so that $2g_i+1$ points on the surface intersect the skewer, and rotate by $\pi$. The quotient is homeomorphic to a disk with $2g_i +1$ cone points of order two, which may be arranged on the diameter of the disk. Reflection across the diameter gives the desired covering map $S_i \rightarrow \mathcal{O}_{n_i,1}$. Thus, the union of these surfaces $S_i$ glued along their boundary curves four-fold covers the union of the orbifolds along their boundary lines concluding the proof.  
  \end{proof}

 \begin{corollary} \label{ACracg}
   {\it If $G \in \mathcal{C}_S$, then $G$ is abstractly commensurable to a right-angled Coxeter group. }
 \end{corollary}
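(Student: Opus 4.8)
The plan is to leverage the abstract commensurability classification (Theorem \ref{classification}) together with Lemma \ref{firacg}. As in the proof of Proposition \ref{maximalelement}, I would attach to $G \cong \pi_1(X) \in \mathcal{C}_S$ the primitive integer quadruple $(p_1,p_2,p_3,p_4)$ (with no common integer factor) recording the Euler characteristics of the complementary pieces of the singular curve, normalized as in that proof; by Theorem \ref{classification} this quadruple is a complete invariant of the abstract commensurability class $\mathcal{G}$ of $G$. The key elementary observation is that $\mathcal{G}$ contains the fundamental group of a union of four surfaces with one boundary component glued along their boundary circles precisely when every $p_i$ is odd: scaling $(p_1,\dots,p_4)$ by a large odd integer realizes each $p_i$ as $\chi(S_{g_i,1})$ with $g_i \geq 1$, and pairing the four one-holed surfaces produces two closed surfaces each carrying an essential separating simple closed curve.

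\emph{Case A: all $p_i$ odd.} Then $\mathcal{G}$ contains $\pi_1(X')$ for such a space $X'$, and Lemma \ref{firacg} shows $\pi_1(X')$ is a finite-index subgroup of a right-angled Coxeter group; hence $G$, being abstractly commensurable to $\pi_1(X')$, is abstractly commensurable to a right-angled Coxeter group. By the trichotomy of Definition \ref{subclasses} this disposes of $\mathcal{C}_0$ and of all classes in $\mathcal{C}_1$ and $\mathcal{C}_2$ with odd invariant.

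\emph{Case B: some $p_i$ is even.} Since cutting a surface along a separating simple closed curve yields pieces of odd Euler characteristic, no representative of $\mathcal{G}$ in $\mathcal{X}_S$ is a union of one-holed surfaces, so Lemma \ref{firacg} does not apply directly; in this case each representative is the union of a surface carrying a non-separating curve with another surface. Fix $G \cong \pi_1(X)$, $X \in \mathcal{X}_S$, and apply Lemma \ref{existcovers} to obtain a finite cover $Y \to X$ with $Y$ the union of four surfaces each having two boundary components (one red, one blue), all red circles identified to a single singular curve and all blue circles identified to a single singular curve. As $\pi_1(Y) \leq G$ has finite index, it suffices to show $\pi_1(Y)$ is abstractly commensurable to a right-angled Coxeter group. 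For this I would establish a two-boundary analogue of Lemma \ref{firacg}: a hyperbolic surface $S_{g,2}$ finitely covers a right-angled hyperbolic reflection orbifold with two boundary edges in such a way that each boundary circle of $S_{g,2}$ covers a boundary edge with the same degree — argued as in Lemma \ref{firacg} by skewering $S_{g,2}$ along an axis through both boundary holes and through finitely many surface points, rotating by $\pi$ to obtain an annulus-with-cone-points–type orbifold, and reflecting across the arc carrying the cone points. Gluing four such reflection orbifolds along their red boundary edges and along their blue boundary edges produces a right-angled hyperbolic reflection orbi-complex finitely covered by $Y$, whose orbifold fundamental group is a right-angled Coxeter group; therefore $\pi_1(Y)$, and hence $G$, is abstractly commensurable to a right-angled Coxeter group.

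I expect the entire difficulty to lie in Case B. The delicate points are: making the two-boundary skewering and reflection construction precise so that the quotient is genuinely a right-angled reflection orbifold; arranging the four covering maps $S_{g_j,2} \to \mathcal{O}^{(j)}$ to have a common degree so that the red and blue identifications in $Y$ descend to identifications of boundary edges of the orbifolds; and verifying that the resulting orbi-complex has right-angled Coxeter — not merely Coxeter — orbifold fundamental group. By contrast, Case A is an immediate appeal to Lemma \ref{firacg}, and sorting each class into Case A or Case B is routine Euler-characteristic bookkeeping via Theorem \ref{classification}.
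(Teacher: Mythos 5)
You have spotted a genuine gap in the paper's one‑line proof. The paper asserts that for any $G \in \mathcal{C}_S$ there exists $Y \in \mathcal{X}_S$ with $\pi_1(Y)$ abstractly commensurable to $G$ and $Y$ a union of four one‑holed surfaces glued along a single circle, and then invokes Lemma \ref{firacg}. As you observe, $\chi(S_{g,1}) = 1-2g$ is always odd, so if the normalized quadruple $(p_1,\ldots,p_4)$ of the class has an even entry no such $Y$ exists: for instance $S_3 \cup_\gamma S_3$ with $\gamma$ non‑separating on one copy and separating the other into a one‑holed torus and a one‑holed genus‑two piece yields quadruple $(-3,-2,-2,-1)$, which is not proportional to any all‑odd quadruple. (The same parity issue affects the construction of $G_0$ in the proof of Proposition \ref{maximalelement}, parts (b)--(c), where $X_0$ is declared to be a union of one‑holed surfaces of Euler characteristic $p_i$.) Your Case A reproduces the paper's argument and is valid on the all‑odd classes; Case B is a genuine addition the paper does not have.

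Two remarks on Case B. First, the paper already handles part of it: for $\mathcal{G} \subset \mathcal{C}_2$, Lemma \ref{orbicovers} shows directly that any $X \in \mathcal{X}_2$ finitely covers $\mathcal{O}_{m,n}$, so $\pi_1(X)$ is a finite‑index subgroup of $W_{m,n}$; this disposes of all of $\mathcal{C}_2$, including the even classes. What truly remains for your two‑boundary construction is the even subcase of $\mathcal{C}_1$: a non‑separating curve glued to a curve separating the other surface into unequal‑genus pieces, with the two‑holed complementary piece $S_{g,2}$ having $g$ even after normalization. Second, the sketched two‑boundary analogue of Lemma \ref{firacg} is plausible but is substantially more than a delicate point: the $\pi$‑rotation quotient of $S_{g,2}$ is an annulus with $2g$ order‑two cone points, the further reflection quotient is a right‑angled hyperbolic polygonal orbifold with two free boundary edges, and you must still arrange all four covering degrees and the way the red and blue circles wrap so that the identifications in the Lemma \ref{existcovers} cover descend to identifications of orbifold boundary edges producing a right‑angled (not merely Coxeter) orbi‑complex group. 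Carrying this out carefully would repair the proof; as written, the paper's argument does not cover these classes.
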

 \begin{proof}
  Let $G \in \mathcal{C}_S$. By the abstract commensurability classification within $\mathcal{C}_S$ given in Theorem \ref{classification}, there exists $Y\in \mathcal{X}_S$ whose fundamental group is abstractly commensurable to $G$ and so that $Y$ has one singular curve that identifies the boundary components of four surfaces each with one boundary component. The group $\pi_1(Y)$ is a finite-index subgroup of a right-angled Coxeter group by Lemma \ref{firacg}, so, $G$ is abstractly commensurable to a right-angled Coxeter group.  
 \end{proof}

 For the remainder of the section, we restrict attention to the relationship between the groups in $\mathcal{C}_S$ and the groups $W_{m,n}$ studied by Crisp--Paoluzzi in \cite{crisppaoluzzi}. Recall, $\mathcal{X}_2 \subset \mathcal{X}_S$ is defined to be the set of spaces $X \in \mathcal{X}_S$ that can be realized as the union of two surfaces along curves of topological type one. The groups $\mathcal{C}_2 \subset \mathcal{C}_S$ are the fundamental groups of spaces in $\mathcal{X}_2$ (see Definition \ref{subclasses}).


\begin{lemma} \label{orbicovers}
 {\it If $X = S_g \cup_{\gamma} S_h \in \mathcal{X}_2$, then $X$ $8$-fold covers $\mathcal{O}_{g+3,h+3}$. Conversely, if $m,n \geq 5$, then $\mathcal{O}_{m,n}$ is $8$-fold covered by $S_{m-3} \cup_{\gamma}S_{n-3} \in \mathcal{X}_2$.  }
\end{lemma}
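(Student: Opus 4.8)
The plan is to construct the covering map directly, by covering the two polygonal reflection orbifolds separately and then gluing. Write $\mathcal{O}_{g+3,h+3}=\mathcal{O}_{g+3}\cup_{e}\mathcal{O}_{h+3}$, where $\mathcal{O}_m$ is the reflection orbifold of a right-angled hyperbolic $m$-gon, so that $\pi_1^{orb}(\mathcal{O}_m)=W_m=W(C_m)$ with the $m$ Coxeter generators the reflections $r_1,\dots,r_m$ in the sides, and $e$ is the common reflection edge along which the two polygons are identified; say $e$ is the side fixed by $r_1$. Since $X\in\mathcal{X}_2$, after relabelling we may assume $X=S_g\cup_\gamma S_h$ with $\gamma$ identified to a non-separating simple closed curve $\gamma_g\subset S_g$ and $\gamma_h\subset S_h$ (the case in which $\gamma$ separates a surface into two pieces of equal Euler characteristic, also of topological type one, is handled verbatim). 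The core of the argument is the following claim, from which both directions follow quickly: \emph{for every $m\ge 5$ there is an $8$-fold orbifold covering $q_m\colon S_{m-3}\to\mathcal{O}_m$ whose full preimage of $e$ is a single non-separating simple closed geodesic.}

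To establish the claim I would exhibit a torsion-free subgroup $\Gamma\le W_m$ of index $8$, contained in the orientation-preserving subgroup $W_m^{+}$, with $\mathbb{H}^2/\Gamma$ connected. Then $\mathbb{H}^2/\Gamma$ is forced to be the closed orientable surface $S_{m-3}$ by the Euler characteristic count
\[ \chi(\mathbb{H}^2/\Gamma)=8\,\chi^{orb}(\mathcal{O}_m)=8\Bigl(1-\tfrac m4\Bigr)=2-2(m-3), \]
which also explains the hypothesis $m\ge 5$ (genus at least two). By standard Coxeter theory (compare Theorem \ref{subgroup}), $\Gamma$ is torsion-free precisely when it meets no conjugate of the finite special subgroups $\langle r_i\rangle$ and $\langle r_i,r_{i+1}\rangle$, so I would produce $\Gamma$ as a point stabiliser for an explicit transitive homomorphism $\rho\colon W_m\to\mathrm{Sym}(8)$ in which each $\rho(r_i)$ is a fixed-point-free involution and each $\langle\rho(r_i),\rho(r_{i+1})\rangle$ acts freely --- in the spirit of the explicit surface-over-orbifold coverings used for Lemma \ref{firacg} and drawn in Figure \ref{pnorbi}. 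The extra work concerns the behaviour over $e$. The mirror lines of $\mathbb{H}^2$ lying over $e$ are the $W_m$-translates of the fixed line $\ell$ of $r_1$; these translate-lines are pairwise disjoint, since two conjugates of $r_1$ fixing a common point would both lie in the isotropy group of that point (of order at most $4$), which contains at most one conjugate of $r_1$ because distinct Coxeter generators of a right-angled Coxeter group are non-conjugate. Moreover $\mathrm{Stab}_{W_m}(\ell)=\langle r_1\rangle\times\langle r_2,r_m\rangle$ contains the translation $r_2r_m$ along $\ell$. Hence the preimage of $e$ in $S_{m-3}$ has exactly one component for each $\Gamma$-orbit of $W_m$-translates of $\ell$, and I must choose $\rho$ so that $\rho(\langle r_1,r_2,r_m\rangle)$ acts transitively on the eight points --- so that this preimage is a single embedded closed geodesic --- and so that $\rho(\langle r_2,\dots,r_m\rangle)$ acts transitively on the eight points --- equivalently $S_{m-3}\setminus\gamma_m$ is connected, i.e.\ $\gamma_m$ is non-separating. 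One checks by hand that such a $\rho$ exists; the resulting $\Gamma$ is necessarily non-normal, since these transitivity requirements together with torsion-freeness cannot be met by an abelian quotient.

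Granting the claim, the first assertion follows: the restrictions $\gamma_g\to e$ and $\gamma_h\to e$ are isomorphic orbifold coverings of the $1$-orbifold $e$ (up to isomorphism there is only one connected covering of $e$ of this degree by a circle), so $q_{g+3}$ and $q_{h+3}$ glue along $\gamma_g$ and $\gamma_h$ to an $8$-fold covering $S_g\cup_\gamma S_h\to\mathcal{O}_{g+3}\cup_e\mathcal{O}_{h+3}=\mathcal{O}_{g+3,h+3}$. For the converse, given $m,n\ge 5$ set $g=m-3\ge 2$ and $h=n-3\ge 2$, apply the claim to $\mathcal{O}_m$ and $\mathcal{O}_n$, and glue the two covers along the preimages of $e$ to obtain an $8$-fold covering $S_{m-3}\cup_\gamma S_{n-3}\to\mathcal{O}_{m,n}$; since $\gamma$ is non-separating in each surface it has topological type one, so $S_{m-3}\cup_\gamma S_{n-3}\in\mathcal{X}_2$.

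The main obstacle is the claim, and within it the simultaneous control of torsion-freeness (so the cover is an honest closed surface), of the number and embeddedness of the components of the preimage of $e$ (forcing it to be a single embedded curve rather than several curves or an immersed one), and of the non-separating condition; the genus is automatic from the Euler characteristic computation, so the real content is this edge combinatorics. I expect to dispatch it concretely, by displaying a tiling of $S_{m-3}$ by eight copies of the fundamental $m$-gon and tracking the images of the sides, exactly as with the other explicit coverings in the paper.
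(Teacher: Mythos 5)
Your route is genuinely different from the paper's, but as written it has a real gap. The paper proves the key claim directly by exhibiting the cover: for $\gamma_g$ non-separating it gives the explicit chain of degree-$2$ orbifold quotients $S_g\to S^2(2,\dots,2)\to\cO_{2g+2}\to\cO_{g+3}$ (rotation by $\pi$ about a skewer through $2g{+}2$ points, reflection in the equatorial plane, then folding one reflection edge), and a parallel three-step chain for $\gamma_g$ separating the surface in half (reflect across $\gamma_g$, rotate by $\pi$, reflect across a diameter), in each case arranging that $\gamma_g$ is the full preimage of the distinguished reflection edge with total degree $8$. Nothing is left unverified because the maps are displayed. You instead propose to realise the cover group-theoretically as the $W_m$-set on cosets of a torsion-free index-$8$ subgroup $\Gamma\le W_m^{+}$, produced as a point stabiliser of a transitive $\rho\colon W_m\to\mathrm{Sym}(8)$ satisfying a list of combinatorial constraints; your discussion of why disjointness of the mirror lines and the transitivity conditions would yield an embedded, connected, non-separating preimage of the edge is sound.

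The gap is that you never produce $\rho$. The sentence ``one checks by hand that such a $\rho$ exists'' is precisely the content of the lemma, and neither a construction nor an existence argument is supplied; for general $m$ several constraints must be met simultaneously, and that is not obvious without writing one down. Moreover the stated conditions on $\rho$ do not actually enforce $\Gamma\le W_m^{+}$, which your Euler-characteristic count tacitly uses: a fixed-point-free involution of an $8$-element set is a product of four disjoint transpositions, hence an \emph{even} permutation, so $\rho(W_m)\le A_8$ and the sign in $\mathrm{Sym}(8)$ is blind to the Coxeter length parity. Without either passing to a map into $\mathrm{Sym}(8)\times\Z/2$ or separately verifying orientability of $\Hy^2/\Gamma$, the quotient surface could a priori be non-orientable and the conclusion $\Hy^2/\Gamma\cong S_{m-3}$ would not follow. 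Finally, ``handled verbatim'' for the separating case overstates matters: there $\rho(\la r_2,\dots,r_m\ra)$ must preserve a $4{+}4$ partition of the eight points rather than act transitively, so a second, different $\rho$ is required (and a given $X\in\mathcal{X}_2$ may well have one amalgamating curve of each type, so both versions are needed before gluing). None of these are wrong ideas, but together they leave the proposal a plan for a proof rather than a proof; the paper sidesteps all of this bookkeeping by drawing the covers.
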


\begin{proof}
 We show that if $\gamma_g:S^1 \rightarrow S_g$ is an essential simple closed curve of topological type one, then there exists an $8$-fold orbifold covering map $S_g \rightarrow \mathcal{O}_{g+3}$ so that $\gamma_g$ orbifold covers a reflection edge by degree $8$, as illustrated in Figure \ref{T1CPcovers}. Thus, if $X = S_g \cup_{\gamma} S_h$, where $\gamma$ identifies two curves of topological type one, then $S_g \cup_{\gamma} S_h$ $8$-fold orbifold covers $\mathcal{O}_{g+3, h+3}$. 
 
 First suppose $\gamma_g:S^1 \rightarrow S_g$ is non-separating. Skewer $S_g$ so that $2g+2$ points on the surface intersect the skewer, and rotate by $\pi$. The quotient under this action is $S^2(2, \ldots, 2)$, the $2$-sphere with $2g+2$ cone points of order two. This map $p_1:S_g \rightarrow S^2(2, \ldots, 2)$ is an orbifold covering map: each ramification point in the sphere has a neighborhood in which the cover is given by rotation by $\pi$, and all other points have a neighborhood with preimage two homeomorphic copies of the neighborhood. The six cone points may be arranged along the equator of the sphere. Reflection through the equatorial plane has a quotient $\mathcal{O}_6$. Finally, $\mathcal{O}_6$ $2$-fold orbifold covers $\mathcal{O}_5$ by reflection, which can be seen by unfolding $\mathcal{O}_5$ along a reflection edge. It is clear that this covering, illustrated in Figure \ref{T1CPcovers} can be arranged so that $\gamma_g$ $8$-fold covers a reflection edge. 
 
 \begin{figure}[t]
   \includegraphics[height=10.0cm]{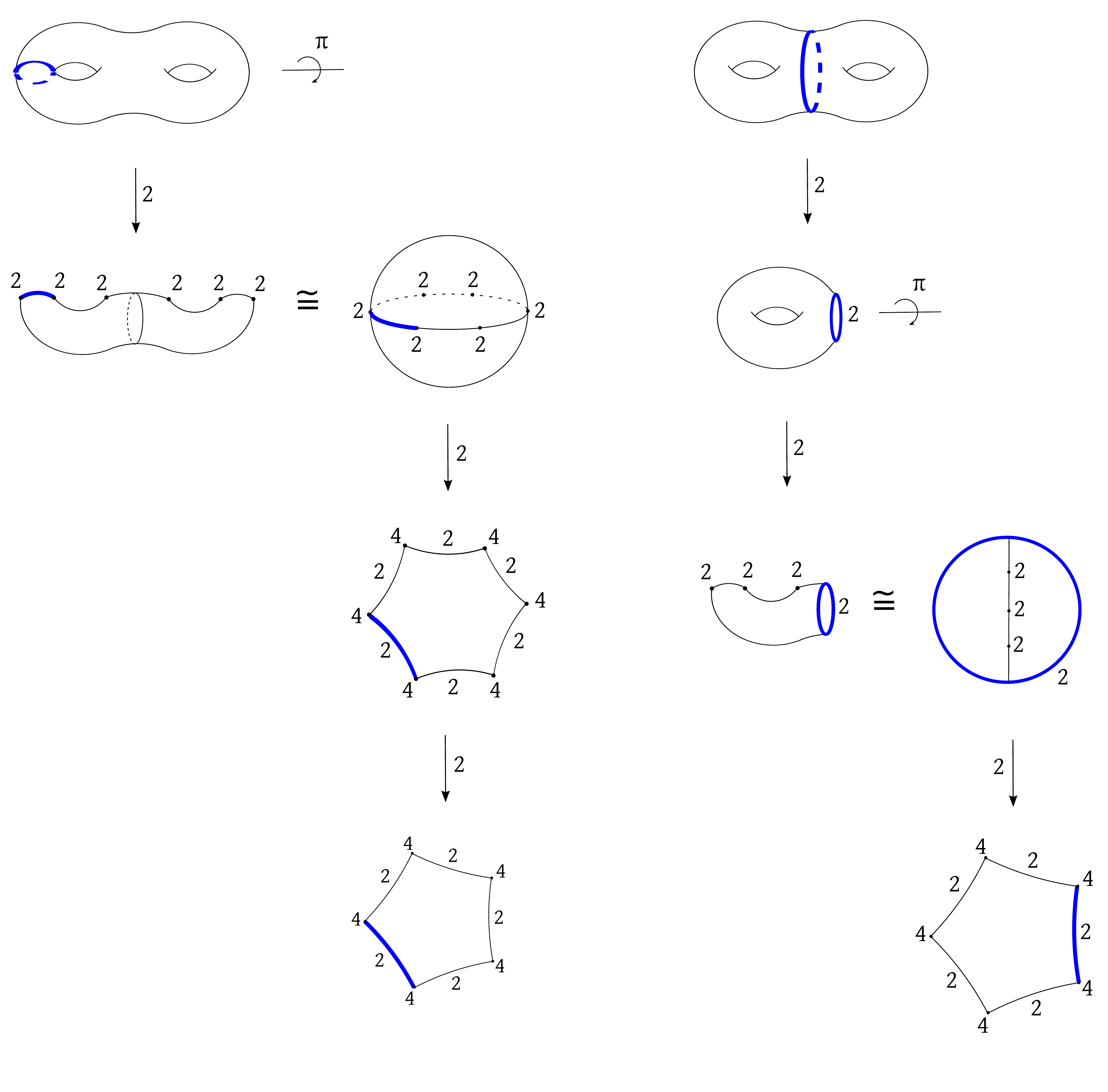}
   \caption[A cover of reflection orbifolds by surfaces]{ {\small Shown above are orbifold covering maps $S_2 \rightarrow \mathcal{O}_5$ described in Lemma \ref{orbicovers} and constructed so that the highlighted curves of topological type one cover a reflection edge in the orbifold $\mathcal{O}_5$. In particular, the union of these surfaces over the highlighted curves finitely covers the union of the orbifolds along the reflection edges.  } }
  \label{T1CPcovers}
 \end{figure}
 
 Now suppose $\gamma_g:S^1 \rightarrow S_g$ is separating. Reflecting $S_g$ across the curve $\gamma_g$ yields a $2$-fold orbifold cover of an orbifold with orbifold boundary and underlying space $S_{\frac{g}{2}, 1}$. Skewer this orbifold along $g+1$ points and rotate by $\pi$ yielding an orbifold with underlying space a disk, $g+1$ cone points or order two, and so that the boundary consists solely of reflection points. Finally arrange the cone points along a diameter of the disk and reflect about this line. These covering maps are illustrated in Figure \ref{T1CPcovers}. As in the non-separating case, one can easily verify each of these maps is an orbifold covering map. \end{proof}
 
 We immediately obtain the following corollary.
 
 \begin{corollary}
  {\it If $G \in \mathcal{C}_2$, then $G$ embeds as a finite-index subgroup in the right-angled Coxeter group $W_{m,n}$ for some $m$ and $n$.}
 \end{corollary}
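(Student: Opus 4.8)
The plan is to read this off directly from Lemma~\ref{orbicovers}, since the geometric work has already been done there. First I would unwind the hypothesis: if $G \in \mathcal{C}_2$, then $G \cong \pi_1(X)$ for some $X \in \mathcal{X}_2$, and by the definition of $\mathcal{X}_2$ such an $X$ may be written as $X = S_g \cup_{\gamma} S_h$, where $\gamma$ identifies essential simple closed curves of topological type one on the closed hyperbolic surfaces $S_g$ and $S_h$. Since $g, h \geq 2$, the integers $m = g+3$ and $n = h+3$ satisfy $m, n \geq 5$, so $W_{m,n} = \pi_1^{orb}(\mathcal{O}_{m,n})$ is defined.

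Next I would apply Lemma~\ref{orbicovers} to obtain an $8$-fold orbifold covering map $p : X \rightarrow \mathcal{O}_{g+3,\,h+3}$. Because $X$ is a genuine surface amalgam with trivial isotropy at every point, its orbifold fundamental group is simply $\pi_1(X) \cong G$, and $X$ and $\mathcal{O}_{g+3,h+3}$ share a common universal cover $\widetilde{X}$; concretely $X = \widetilde{X}/G$ and $\mathcal{O}_{g+3,h+3} = \widetilde{X}/W_{g+3,h+3}$. The correspondence between orbifold coverings and subgroups of the orbifold fundamental group (see \cite[Chapter 13]{ratcliffe}) then exhibits $G$ as a subgroup of $W_{g+3,h+3}$ whose index equals the covering degree, namely $8$. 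Taking $m = g+3$ and $n = h+3$ completes the argument.

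I do not expect any real obstacle: all of the content is contained in Lemma~\ref{orbicovers}, and the only point to spell out is the (standard) translation from the orbifold-covering statement to the finite-index-embedding statement, using the description of $\pi_1^{orb}$ developed earlier in the section. This is precisely why the corollary can be stated as following immediately.
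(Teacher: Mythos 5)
Your proof is correct and follows exactly the route the paper intends: the paper states the corollary as an immediate consequence of Lemma~\ref{orbicovers}, and your write-up simply makes explicit the standard translation from an $8$-fold orbifold covering $X \rightarrow \mathcal{O}_{g+3,h+3}$ to an index-$8$ embedding $G = \pi_1(X) \hookrightarrow W_{g+3,h+3} = \pi_1^{orb}(\mathcal{O}_{g+3,h+3})$. No discrepancy with the paper's argument.
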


 {\bf Remark:} An alternative covering map $S_2 \rightarrow \mathcal{O}_5$ appears in \cite{scott78}. Under this covering map, illustrated in Figure \ref{reflectioncovers}, the curves of topological type one can also be chosen to cover a reflection edge in the pentagon orbifold. 
 
\begin{figure}[t]
   \includegraphics[height=10.0cm]{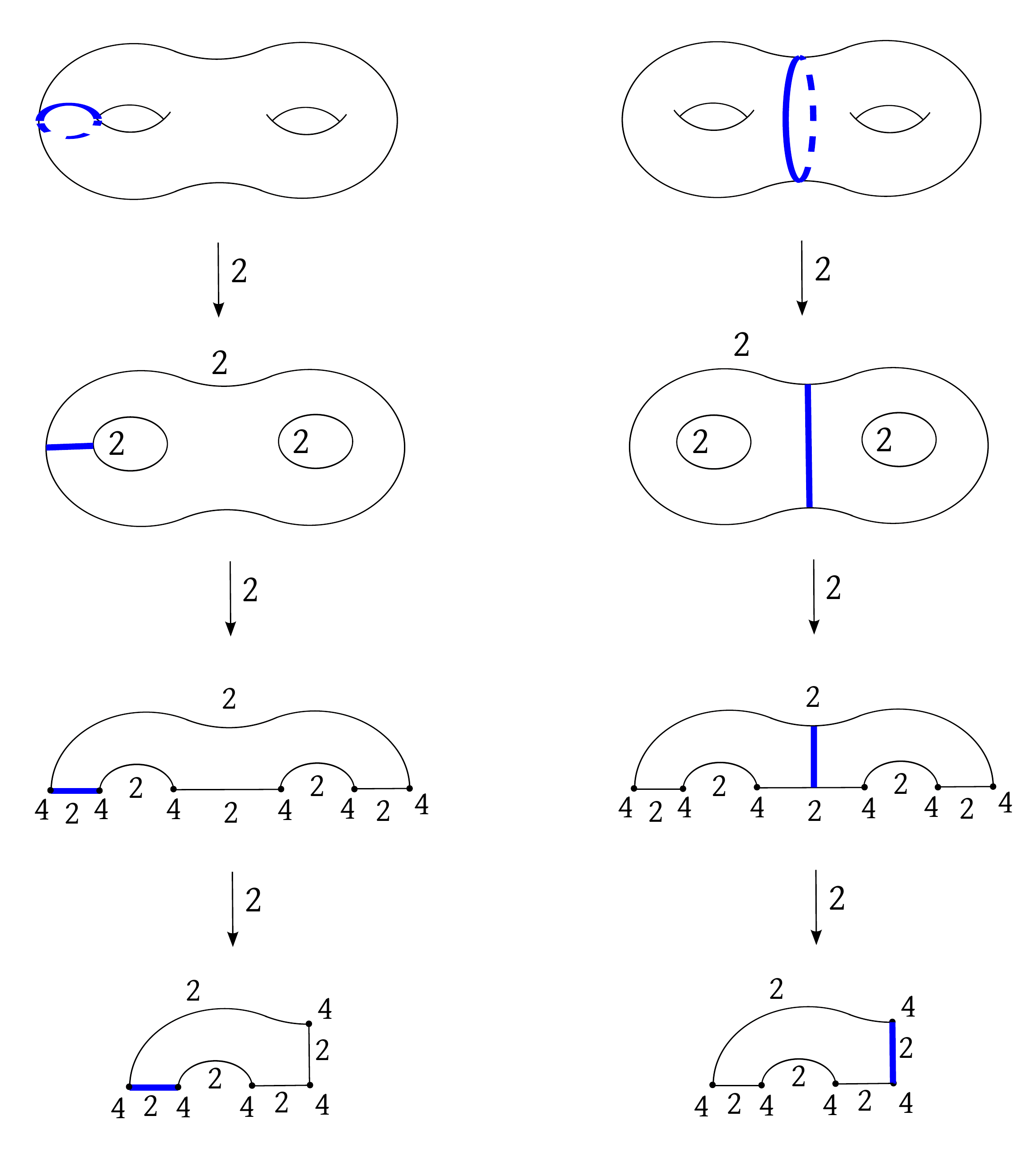}
   \caption[An alternative covering map of reflection orbifolds by surfaces]{ {\small Pictured above are orbifold covering maps that appear in \cite{scott78}. Each map can be realized by embedding the surface in $\R^3$ and reflecting about a plane cutting through the surface. For our purposes, it is important to note that both curves of topological type one cover a reflection edge by degree eight.} }
  \label{reflectioncovers}
\end{figure}

\begin{prop} \label{CPAC}
 {\it If $G \in \mathcal{C}_S$, then $G$ is abstractly commensurable to $W_{m,n}$ for some $m$ and $n$ if and only if $G \in \mathcal{C}_2$. }
\end{prop}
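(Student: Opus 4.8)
The plan is to prove both implications by using Lemma \ref{orbicovers} as a bridge between amalgams of topological type one and the Crisp--Paoluzzi groups, together with the structural fact recorded after Definition \ref{subclasses}: by Theorem \ref{classification}, each of the subclasses $\mathcal{C}_0$, $\mathcal{C}_1$, $\mathcal{C}_2$ is a union of entire abstract commensurability classes within $\mathcal{C}_S$. Neither direction requires new constructions; both are short applications of results already in hand.

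For the implication ``$G \in \mathcal{C}_2 \Rightarrow G$ is abstractly commensurable to $W_{m,n}$ for some $m,n$'', this is precisely the corollary stated just above Proposition \ref{CPAC}, and I would simply recall its proof: if $G \cong \pi_1(X)$ with $X = S_g \cup_\gamma S_h \in \mathcal{X}_2$, then by Lemma \ref{orbicovers} the space $X$ is an $8$-fold cover of $\mathcal{O}_{g+3,h+3}$, so $G$ is an index-$8$ subgroup of $W_{g+3,h+3} \cong \pi_1^{orb}(\mathcal{O}_{g+3,h+3})$, hence abstractly commensurable to it.

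For the converse, suppose $G \in \mathcal{C}_S$ is abstractly commensurable to $W_{m,n}$ for some $m,n \geq 5$. Applying the second assertion of Lemma \ref{orbicovers}, the orbi-complex $\mathcal{O}_{m,n}$ is $8$-fold covered by $S_{m-3} \cup_\gamma S_{n-3}$, and this space lies in $\mathcal{X}_2$ since $m-3, h := n-3 \geq 2$ (so both factors are genuine closed hyperbolic surfaces) and the amalgamating curves have topological type one. Hence $\pi_1(S_{m-3} \cup_\gamma S_{n-3})$ is an index-$8$ subgroup of $W_{m,n}$, so it is abstractly commensurable to $W_{m,n}$ and therefore, by transitivity, to $G$. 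Now $G$ and $\pi_1(S_{m-3} \cup_\gamma S_{n-3})$ are abstractly commensurable members of $\mathcal{C}_S$, and the latter lies in $\mathcal{C}_2$; since $\mathcal{C}_2$ is a union of abstract commensurability classes within $\mathcal{C}_S$, we conclude $G \in \mathcal{C}_2$.

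The argument has essentially no hard step. The only point requiring care is the invocation of the partition of commensurability classes by $\mathcal{C}_0, \mathcal{C}_1, \mathcal{C}_2$, which is exactly what upgrades ``$G$ is commensurable to something in $\mathcal{C}_2$'' to ``$G \in \mathcal{C}_2$''; this rests on Theorem \ref{classification}. One should also confirm the conventions are consistent — that $W_{m,n}$ in the proposition ranges over $m,n \geq 5$, matching the hypothesis of Lemma \ref{orbicovers}, and that the genus bound ``greater than one'' in the definition of $\mathcal{X}_2$ is compatible with $g+3, h+3 \geq 5$ — but neither of these presents any difficulty.
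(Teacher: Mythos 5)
Your proof is correct and follows essentially the same route as the paper's: both directions rest on Lemma \ref{orbicovers}, and the final upgrade from ``$G$ is commensurable to some $G' \in \mathcal{C}_2$'' to ``$G \in \mathcal{C}_2$'' is exactly the paper's appeal to Theorem \ref{classification} (via the fact that $\mathcal{C}_0, \mathcal{C}_1, \mathcal{C}_2$ partition the commensurability classes). Your version is slightly more explicit about the degree-$8$ covers and the index/genus conventions, but the argument is the same.
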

\begin{proof}
 Suppose $G \in \mathcal{C}_2$ so $G \cong \pi_1(X)$ with $X \in \mathcal{X}_2$. By Lemma \ref{orbicovers}, $X$ finitely covers $\mathcal{O}_{m,n}$ for some $m,n$. Hence $G$ is abstractly commensurable to $W_{m,n}$ for some $m$ and $n$. Conversely, suppose $G \in \mathcal{C}_S$ and $G$ is abstractly commensurable to $W_{m,n}$ for some $m$ and $n$. By Lemma \ref{orbicovers}, $W_{m,n}$ is abstractly commensurable to $G'$ for some $G' \in \mathcal{C}_2$. Since abstract commensurability is an equivalence relation, $G$ is abstractly commensurable to $G'$ so $G \in \mathcal{C}_2$ by Theorem \ref{classification}.\end{proof}

Finally, we may use the analysis of this section to produce a maximal element in the class of right-angled Coxeter groups for abstract commensurability classes within $\mathcal{C}_2$.  
 
\begin{corollary} \label{orbimaximal}
 {\it If $G \in \mathcal{C}_2$, then there is a right-angled Coxeter group $G_0$ so that every group in $\mathcal{C}_S$ in the abstract commensurability class of $G$ is a finite-index subgroup of $G_0$. }
\end{corollary}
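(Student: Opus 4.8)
Write $\mathcal{G} \subset \mathcal{C}_S$ for the abstract commensurability class of $G$. Since $G \in \mathcal{C}_2$ and, by Theorem \ref{classification}, the subclasses $\mathcal{C}_0, \mathcal{C}_1, \mathcal{C}_2$ partition $\mathcal{C}_S$ into unions of abstract commensurability classes, we have $\mathcal{G} \subset \mathcal{C}_2$, and every group in $\mathcal{C}_S$ that is abstractly commensurable to $G$ already lies in $\mathcal{G}$. The plan is to exhibit a single reflection orbi-complex $\mathcal{O}_{m,n}$ that is finitely covered by each of the four ``minimal'' spaces produced in Proposition \ref{maximalelement}(c) for $\mathcal{G}$, and then to take $G_0 = W_{m,n} = \pi_1^{orb}(\mathcal{O}_{m,n})$, which is a right-angled Coxeter group by definition.

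First I would recall the reformulation from the proof of Proposition \ref{maximalelement}: the class $\mathcal{G}$ corresponds to a primitive quadruple, which (since $\mathcal{G} \subset \mathcal{C}_2$) has the form $(p_1,p_1,p_3,p_3)$ with $p_1,p_3 \leq -1$, and the distinguished spaces are $X_0$ (four one-holed surfaces of Euler characteristics $p_1,p_1,p_3,p_3$ glued along their boundaries), $Y_0$, $Z_0$, and $W_0$ (two closed surfaces of Euler characteristics $2p_1$ and $2p_3$ glued to one another along non-separating curves), exactly as described there. The key bookkeeping step is to check that each of $X_0, Y_0, Z_0, W_0$, when realized as the union of two closed surfaces along essential simple closed curves of topological type one, has those two surfaces with Euler characteristics $2p_1$ and $2p_3$. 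For $W_0$ this is its definition; for $Y_0$ and $Z_0$ it is immediate from the descriptions in Proposition \ref{maximalelement}(c), since the closed surface built from two one-holed pieces of equal Euler characteristic carries a separating curve of topological type one; for $X_0$ one must pair the two one-holed surfaces of Euler characteristic $p_1$ into a single closed surface of Euler characteristic $2p_1$, and likewise for $p_3$, and observe that the topological-type-one constraint forces precisely this pairing.

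Writing $2p_1 = 2 - 2a$ and $2p_3 = 2 - 2b$ with $a,b \geq 2$, Lemma \ref{orbicovers} then applies to each of the four spaces and produces an $8$-fold orbifold covering onto $\mathcal{O}_{a+3,b+3}$; hence $\pi_1(X_0), \pi_1(Y_0), \pi_1(Z_0), \pi_1(W_0)$ are each index-$8$ subgroups of $W_{a+3,b+3}$. Finally, given any $G' \in \mathcal{C}_S$ abstractly commensurable to $G$, Proposition \ref{maximalelement}(c) makes $G'$ a finite-index subgroup of one of these four groups, and since the index of subgroups is multiplicative, $G'$ is a finite-index subgroup of $W_{a+3,b+3}$. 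Taking $G_0 = W_{a+3,b+3}$ finishes the argument.

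I expect the only delicate point to be the Euler-characteristic bookkeeping of the third paragraph: making explicit that the four distinguished spaces all admit a common topological-type-one decomposition with the same pair of surface Euler characteristics, so that Lemma \ref{orbicovers} yields one and the same orbi-complex for all of them. Everything else is a direct appeal to results already established in this section and in Section 3.
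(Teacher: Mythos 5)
Your proof is correct, and it takes a genuinely different route from the paper's. The paper argues more abstractly: it applies Lemma \ref{orbicovers} to embed $G$ and an arbitrary $G'\in\mathcal{G}$ as finite-index subgroups of $W_{m,n}$ and $W_{k,\ell}$ respectively, invokes the Crisp--Paoluzzi abstract commensurability classification of the $W_{m,n}$ (their Theorem 1.1) to conclude $\frac{m-4}{n-4}=\frac{k-4}{\ell-4}$, and then relies on a separately asserted covering statement, that $\mathcal{O}_{m,n}$ finitely covers the ``primitive'' $\mathcal{O}_{p,q}$ with $\gcd(p-4,q-4)=1$ and the same ratio, to conclude $W_{p,q}$ is the common ambient group. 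You instead route everything through Proposition \ref{maximalelement}(c): every $G'\in\mathcal{G}$ lies with finite index in one of the four distinguished groups $G_0,H_0,K_0,L_0$, and you verify by hand that the four associated spaces $X_0,Y_0,Z_0,W_0$ all decompose as unions of two closed surfaces of Euler characteristics $2p_1$ and $2p_3$ along curves of topological type one, so Lemma \ref{orbicovers} puts all four groups inside the single group $W_{a+3,b+3}$ as index-$8$ subgroups. Your version is more self-contained within the paper's toolkit, since it replaces the covering claim for the orbi-complexes $\mathcal{O}_{m,n}$ (which the paper states without proof) and the appeal to Crisp--Paoluzzi with explicit uses of Proposition \ref{maximalelement}(c) and Lemma \ref{orbicovers}; the resulting ambient group agrees with the paper's because the primitivity of $(p_1,p_1,p_3,p_3)$ means $\gcd(p_1,p_3)=1$, i.e.\ $\gcd((a+3)-4,(b+3)-4)=1$. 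The Euler-characteristic bookkeeping you flag as the delicate point is indeed the crux, and your verification of it — in particular that $X_0$ must be re-paired so that the two $p_1$-pieces form one closed surface and the two $p_3$-pieces the other, which is the unique pairing producing type-one curves — is correct.
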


\begin{proof}
  Let $G \in \mathcal{C}_2$ and let $\mathcal{G} \subset \mathcal{C}_S$ denote the abstract commensurability class of $G$ in $\mathcal{C}_S$. By Lemma \ref{orbicovers},  $G$ is a finite-index subgroup of $W_{m,n}$ for some $m$ and $n$, and, if $G' \in \mathcal{G}$, then $G'$ is a finite-index subgroup of $W_{k, \ell}$ for some $k$ and $\ell$. By \cite[Theorem 1.1]{crisppaoluzzi}, $W_{m,n}$ and $W_{k,\ell}$ are abstractly commensurable if and only if $\frac{m-4}{n-4} = \frac{k-4}{\ell-4}$. Furthermore, $\mathcal{O}_{m,n}$ finitely covers $\mathcal{O}_{p,q}$ whenever $\frac{p-4}{q-4} = \frac{m-4}{n-4}$ and ${\mathsf gcd}(p-4, q-4) = 1$. Thus, $G'$ is a finite-index subgroup of $W_{p,q}$, and $W_{p,q}$ is a maximal element for $\mathcal{G}$ within the class of right-angled Coxeter groups.  \end{proof}

\subsection{Common CAT$(0)$ cubical geometry}
 
  A {\it CAT$(0)$ cube complex} is a polyhedral complex of non-positive curvature whose cells are Euclidean cubes. {\it Special cube complexes}, introduced and defined by Haglund--Wise, are cube complexes in which the hyperplanes are embedded, $2$-sided, and satisfy certain intersection and osculation conditions; a cube complex is special if and only if its fundamental group embeds in a right-angled Artin group \cite{haglundwise}.  For background and details on groups acting on cube complexes, see \cite{sageev}; in particular, details of cubulations of surface groups are given in \cite[Chapter 4.1]{sageev}. 
 
 

  \begin{prop} \label{commoncubing} {\it Let $\mathcal{G} \subset \mathcal{C}_S$ be an abstract commensurability class within $\mathcal{C}_S$. There exists a $2$-dimensional CAT$(0)$ cube complex $X$ so that if $G \in \mathcal{G}$, $G$ acts properly discontinuously and cocompactly by isometries on $X$. Moreover, the quotient $X/G$ is a non-positively curved special cube complex.  }
  \end{prop}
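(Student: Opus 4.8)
The plan is to reduce the problem to a single abstract commensurability class representative and then build one cube complex that works for every group in the class. By Theorem \ref{classification} and Lemma \ref{firacg}, every abstract commensurability class $\mathcal{G} \subset \mathcal{C}_S$ contains the fundamental group of a space $Y \in \mathcal{X}_S$ consisting of four hyperbolic surfaces $S_1, \ldots, S_4$ each with one boundary component, identified along their boundary curves, and $\pi_1(Y)$ is a finite-index subgroup of a right-angled Coxeter group $W = W(\Theta(n_1,\ldots,n_4))$ acting on a reflection orbi-complex $\mathcal{O}(n_1,\ldots,n_4)$ built from four right-angled hyperbolic reflection orbifolds $\mathcal{O}_{n_i,1}$ glued along boundary edges. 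The key point is that right-angled Coxeter groups act properly discontinuously and cocompactly on a CAT$(0)$ cube complex — the Davis--Moussong complex $\Sigma_W$, or equivalently the dual cube complex to the wall structure on the tiling of $\mathcal{O}(n_1,\ldots,n_4)$ by reflection domains — and this complex is $2$-dimensional precisely because $\Theta(n_1,\ldots,n_4)$ has no triangles (indeed the defining graph is a union of subdivided edges, hence triangle-free). Moreover the Davis complex of a right-angled Coxeter group is a special cube complex in the sense of Haglund--Wise (its hyperplanes are the walls, which are embedded, $2$-sided, and have no self-osculations or inter-osculations). So I would set $X = \Sigma_W$ for the representative class $\mathcal{G}$.

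The remaining work is to show that \emph{every} $G \in \mathcal{G}$, not just the distinguished $\pi_1(Y)$, acts geometrically on this same $X$. For this I would invoke Corollary \ref{top} (topological rigidity): if $G, G' \in \mathcal{G}$ then the corresponding spaces have a common finite-sheeted cover, and in fact from the proof of Theorem \ref{classification} (and the regularity Corollary after it) one can choose common \emph{regular} finite covers. Concretely, $G = \pi_1(X_G)$ and $\pi_1(Y)$ are commensurable, so there is a finite-index subgroup $N \leq \pi_1(Y) \leq W$ that is isomorphic to a finite-index subgroup $N' \leq G$. Then $N' \leq G$ acts on $X = \Sigma_W$ properly and cocompactly via the isomorphism $N' \cong N \hookrightarrow W$. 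To upgrade from an action of the finite-index subgroup $N'$ to an action of all of $G$, I would use the standard fact that the automorphism group of a CAT$(0)$ cube complex is large enough: more robustly, one passes to the commensurator. The cleanest route is to observe that since $W$ acts geometrically on $\Sigma_W$ and $N \leq W$ is finite-index, the group $G$ — being abstractly commensurable to $W$ — is itself commensurable to $W$, and one shows $G$ embeds as a uniform lattice in $\mathrm{Isom}(\Sigma_W)$ (or in a canonical cocompact subgroup thereof). Alternatively, and more in the spirit of the paper, I would build $X$ directly from $G$'s own geometry: take the piecewise-hyperbolic $\widetilde{X}_G$, use the bounded-valence polygonal structure of Theorem \ref{bclassification}, and cubulate using the line pattern $\mathcal{L}_G$ of lifts of the singular curve together with lifts of a suitable finite family of curves on each surface piece — Sageev's construction applied to this wall space gives a $2$-dimensional CAT$(0)$ cube complex on which $G$ acts geometrically, and then the uniqueness of the commensurability class together with the orbi-complex picture identifies this cube complex, up to $G$-equivariant isomorphism, with $\Sigma_W$.

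The main obstacle I anticipate is precisely this last step: getting a \emph{single fixed} cube complex $X$ that simultaneously receives geometric actions of \emph{all} groups in $\mathcal{G}$, rather than a family of commensurable-but-not-equal cube complexes. The finite-index subgroup $N'$ acts on $\Sigma_W$ for free, but extending to $G$ requires knowing that $G$ itself is (conjugate into) a cocompact lattice in $\mathrm{Isom}(\Sigma_W)$. I expect this to follow from: (i) $\mathrm{Isom}(\Sigma_W)$ is a locally compact group in which $W$, hence $N$, is a uniform lattice; (ii) $N' \cong N$ is therefore a uniform lattice in $\mathrm{Isom}(\Sigma_W)$ of finite index in its own commensurator image; (iii) since $[G : N'] < \infty$, the group $G$ normalizes $N'$ up to finite index and hence lands in the commensurator of $N'$ in $\mathrm{Isom}(\Sigma_W)$, which acts on $\Sigma_W$ — but care is needed because the commensurator need not itself act on $\Sigma_W$. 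The safe resolution is to instead prove directly, using Lemma \ref{firacg} and Lemma \ref{orbicovers} together with Lemma \ref{boundary}, that \emph{every} $G \in \mathcal{G}$ is a finite-index subgroup of the \emph{same} right-angled Coxeter group $W$ (the one with $\gcd(n_i - \text{const}) = 1$, as in the proof of Corollary \ref{orbimaximal}), at which point $G \leq W \curvearrowright \Sigma_W$ geometrically and $\Sigma_W/G$ is special because it is a finite cover of the special complex $\Sigma_W / W$ — and finite covers of special cube complexes are special. This requires checking that the covers constructed in Lemma \ref{firacg} can all be taken to factor through covers of a common orbi-complex $\mathcal{O}(p_1,\ldots,p_4)$ with the $p_i$ coprime, which is exactly the quadruple-up-to-integer-scale bookkeeping already carried out in the proof of Proposition \ref{maximalelement}(a), combined with the orbifold covering in Lemma \ref{boundary}. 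Modulo the remaining cases $\mathcal{G}\subset\mathcal{C}_1$ where no maximal element in $\mathcal{C}_S$ exists, one still has a common \emph{right-angled Coxeter} overgroup by the same Euler-characteristic quadruple argument, so the cube complex $\Sigma_W$ still serves all of $\mathcal{G}$; I would organize the write-up as a single unified argument via the common RACG overgroup, treating $\mathcal{C}_0, \mathcal{C}_1, \mathcal{C}_2$ uniformly.
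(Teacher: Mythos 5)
The key step you flag as the "main obstacle" — getting a single cube complex on which \emph{every} $G\in\mathcal{G}$ acts geometrically — is indeed the crux, but your proposed resolution has a genuine gap. You propose to show that every $G\in\mathcal{G}$ is a finite-index subgroup of one fixed right-angled Coxeter group $W$ and then take $X=\Sigma_W$. This works cleanly when $\mathcal{G}\subset\mathcal{C}_0$ (where Proposition~\ref{maximalelement} gives a genuine maximal element in $\mathcal{C}_S$, which in turn sits inside a RACG by Lemma~\ref{firacg}), and it can be made to work for $\mathcal{C}_2$ via the $W_{m,n}$ analysis. But for $\mathcal{G}\subset\mathcal{C}_1$ — where some representative is a gluing of a non-separating curve to a separating curve that cuts off subsurfaces of unequal genus — the paper explicitly states that it is \emph{open} whether such a group is a finite-index subgroup of any right-angled Coxeter group at all, and Proposition~\ref{maximalelement} shows $\mathcal{G}$ has no maximal element inside $\mathcal{C}_S$. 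Lemma~\ref{firacg} only produces a RACG overgroup for spaces that decompose into four one-boundary-component surfaces glued along a single singular curve; it does not apply to the mixed separating/non-separating gluing in $\mathcal{C}_1$. So your assertion that "one still has a common right-angled Coxeter overgroup by the same Euler-characteristic quadruple argument" is unsupported, and the proposal stalls exactly on the case the paper is most careful about.

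The paper's actual route avoids needing any common overgroup. It uses Proposition~\ref{maximalelement} to reduce to the finite set $\mathcal{H}(\mathcal{G})=\{H_1,\dots,H_r\}$ ($r\in\{1,2,4\}$) of "near-maximal" groups $H_i=\pi_1(X_i)$, all covered by a single space $Y$ of the four-pieces type by degree $2$. It then builds, on each $X_i$, a non-positively curved special cube complex via Sageev's construction from an \emph{explicit filling collection of simple closed curves}, where the collections are chosen so that (i) they include the amalgamating curves with controlled intersection numbers, and (ii) crucially, the collections for "non-separating" versus "separates exactly in half" on the same surface have \emph{identical full preimage} in the common double cover $Y$. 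This last compatibility is what forces the universal covers of the resulting cube complexes to agree across all the $X_i$, without ever producing a common RACG overgroup. Your instinct that one ought to instead cubulate $\widetilde{X}_G$ directly from curve patterns is actually close to the paper's method, but the essential new idea you are missing is the careful \emph{matching} of the filling collections so that their pullbacks to $Y$ coincide. Without that, you only get commensurable cube complexes, not a single common one.
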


 \begin{proof}
     Let $\mathcal{G} \subset \mathcal{C}_S$ be an abstract commensurability class within $\mathcal{C}_S$. As given in Proposition \ref{maximalelement}, there exists a set of groups $\mathcal{H}(\mathcal{G}) \subset \mathcal{C}_S$ so that every group in $\mathcal{G}$ is a finite-index subgroup of a group in $\mathcal{H}(\mathcal{G})$. So, it suffices to prove that all groups in $\mathcal{H}(\mathcal{G})$ act properly discontinuously and cocompactly by isometries on the same CAT$(0)$ cube complex, with each quotient a special non-positively curved cube complex. 
     
     The set $\mathcal{H}(\mathcal{G}) = \{H_i\} \subset \mathcal{C}_S$ has cardinality one, two, or four, depending on whether $\mathcal{G}$ is in $\mathcal{C}_0$, $\mathcal{C}_1$, or $\mathcal{C}_2$, respectively (see Definition \ref{subclasses}). The groups in $\mathcal{H}(\mathcal{G})$ can be expressed as $H_i \cong \pi_1(X_i)$ with $X_i \in \mathcal{X}_S$ and have the following structure by Proposition \ref{maximalelement}. Each space $X_i$ is the union of closed orientable surfaces $S$ and $T$ along the essential simple closed curves $\gamma_i$ on $S$ and $\rho_i$ on $T$. If $\mathcal{G} \subset \mathcal{C}_0$, the surfaces $S$ and $T$ are identified along separating simple closed curves.  If $\mathcal{G} \subset \mathcal{C}_1$, without loss of generality, in $X_1$, $S$ is glued along a non-separating curve to a separating curve on $T$. In $X_2$, $S$ is glued along a separating curve that divides the surface exactly in half to a separating curve on $T$. Similarly, if $\mathcal{G} \subset \mathcal{C}_2$, $X_1$, $X_2$, $X_3$, and $X_4$ are obtained by gluing surfaces $S$ and $T$ of even genus together, where the four spaces realize the four combinations of gluing $S$ and $T$ along a non-separating or a separating curve that divides the surface exactly in half. By Theorem \ref{classification} (and as illustrated in Figure \ref{coverssamechi}), there exists a space $Y$ that consists of four surfaces each with two boundary components glued to each other along their boundary components so that $Y$ has two singular curves and so that $Y$ $2$-fold covers $X_i$ for all $i$. 
    
     We will first give each surface $S$ and $T$ a special cube complex structure coming from a filling collection of finitely many curves that includes the amalgamated curve; the chosen curves correspond to the set of hyperplanes in the cube complex. We will take the barycentric subdivision of each cube complex, and we will glue the cube complexes together along the locally geodesic paths coming from the amalgamating curves. We show the resulting cube complex obtained after gluing is also special and the cube complex structures on $X_i$ and $X_j$ have the same full pre-image in the $2$-fold cover $Y$ for all $i,j$. Then, we will conclude $\pi_1(X_i)$ and $\pi_1(X_j)$ act properly discontinuously and cocompactly by isometries on the same CAT$(0)$ cube complex.     
    
     To specify the cube complexes, we will first specify a finite filling collection of simple closed curves on $S$ and $T$ satisfying the following: 
     
     \begin{enumerate}
      \item The collection of curves on $S$ includes $\gamma_i$ and the collection of curves on $T$ includes $\rho_i$. Moreover, the collection of curves on $S$ intersects $\gamma_i$ in four points; likewise, the collection of curves on $T$ intersects $\rho_i$ in four points. 
      
      \item For a surface of even genus, the filling collections of curves specified for a non-separating amalgamated curve and for a separating amalgamated curve that divides the surface exactly in half have the same full preimage in the two-fold cover of the surface in the space $Y$. 
      
      \item The cube complex dual to the filling set of curves in $X_i$ is a  $2$-dimensional non-positively curved special cube complex.      
     \end{enumerate}

   \begin{figure}[t] 
   \includegraphics[height=3.5in]{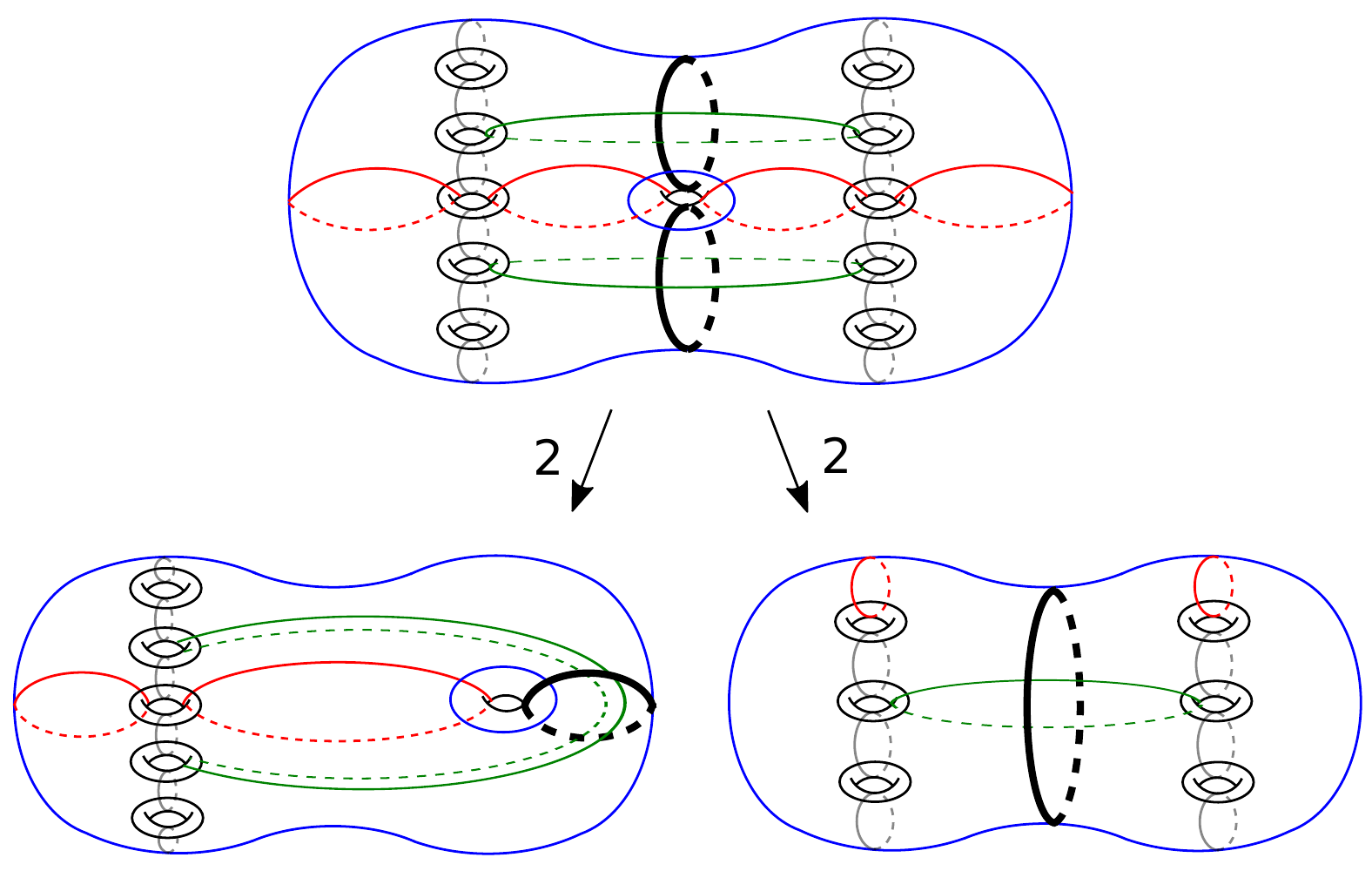}
   \caption{ {\small Illustrated above are the filling collections of simple closed curves used to cubulate the amalgams. On the left is the collection chosen when the gluing curve is non-separating; on the right is the collection chosen when the gluing curve separates the surface exactly in half. The collections yield $2$-dimensional special cube complexes on each surface and have the same full preimage in the two-fold cover illustrated above. } }
  \label{fillings}
 \end{figure}
  
  The filling selection of curves described below is illustrated in an example in Figure \ref{fillings}. To choose the filling collection if $\gamma_i$ or $\rho_i$ is a non-separating curve, arrange the $g$ holes on the surface with $g-1$ holes in one column and one hole in the second. Then, the filling collection of curves contains the following simple closed curves:
  
  \begin{itemize}
   \item The non-separating curve $\gamma_i$ or $\rho_i$, drawn in thick black
   \item A curve around each genus and around the perimeter of the surface, as drawn in blue and black
   \item If the genus is even, include two curves, as drawn in red that, along with the thick non-separating curve, separate the surface exactly in half. If the genus is odd, include one curve that along with the amalgamating curve separates the surface exactly in half
   \item $g$ non-separating curves that connect the $g-1$ holes in the first column, drawn in grey
   \item A curve that intersects the amalgamated curve in two points and passes through two holes on the surface, as drawn in green. If the genus is two, this curve passes twice through one of the holes
  \end{itemize}

  To choose the filling collection if $\gamma_i$ or $\rho_i$ is a separating curve, arrange the holes of the surface in two columns, one on each side of the separating curve. The collection contains the following simple closed curves:
  
  \begin{itemize}
   \item The separating curve, drawn in thick black
   \item A curve around each hole and around the perimeter as drawn in blue and black
   \item A row of non-separating curves connecting the holes in each column and the perimeter, as drawn in grey and red
   \item A non-separating curve that intersects the separating curve in two points and passes through one hole on each side of the separating curve, as drawn in green.
  \end{itemize}

  By construction, condition (1) is satisfied. The collections chosen on a surface of even genus with respect to a non-separating curve and with respect to a curve that divides the surface exactly in half have the same full preimage in the two-fold cover described in Theorem \ref{classification} (and illustrated in Figure \ref{fillings}). Thus, condition (2) is satisfied. 
  
  By Sageev's construction, each filling collection of curves on a hyperbolic surface yields a CAT$(0)$ cube complex on which the surface group acts properly discontinuously and cocompactly by isometries. Since each curve is embedded and at most two distinct curves pairwise-intersect, the resulting cube complex is $2$-dimensional. Moreover, each resulting cube complex structure on the surfaces $S$ and $T$ is special, which can be seen as follows. The filling set of curves is in one-to-one correspondence with the set of hyperplanes of the resulting cube complex. The surfaces are orientable, so the hyperplanes are two-sided. Since the curves are embedded, the hyperplanes are embedded. Each filling set of  curves specified decomposes the surface into a cell complex of twelve polygons. A hyperplane osculates if and only if its corresponding curve lies along non-adjacent sides of one of the cells. This behavior does not occur in the cube complexes specified. Finally, two hyperplanes inter-osculate if and only if the two corresponding curves intersect and also lie along non-adjacent sides of one of the cells. As before, this behavior does not occur in the cube complexes specified. Thus, the resulting cube complex is special. 
  
  Take the barycentric subdivision of each cube complex constructed to obtain a finer two-dimensional non-positively curved special cube complex. Now, each of the amalgamating curves $\gamma_i$ and $\rho_i$ is a locally geodesic path of length eight in the $1$-skeleton of the cube complex. If the amalgamating curve is non-separating, there exists one vertex on this path that lies along the perimeter curve, and if the amalgamating curve is separating, there are two vertices on this path that lie along the perimeter curve. Identify these locally geodesic paths by a cubical isometry so that a vertex on the perimeter curve on $S$ is identified to a vertex on the perimeter curve on $T$.  By construction, Gromov's link condition holds after gluing, so the resulting complex is non-positively curved. 
  
  Examine the hyperplanes in the cube complex structure on $X_i$ obtained after gluing $S$ to $T$ to see that the complex is special. Restricted to each (orientable) surface $S$ or $T$, each hyperplane in $X_i$ lies parallel to one of the simple closed curves specified, so, the hyperplanes in the union are $2$-sided. Since the cube complex structures on $S$ and $T$ are special, to verify that the hyperplanes in the union do not self-intersect, osculate, or inter-osculate, it suffices to consider the hyperplanes that lie in both $S$ and $T$. If both amalgamating curves are non-separating or if both amalgamating curves are separating, then the number of hyperplanes restricted to each surface $S$ and $T$ does not decrease after gluing. Thus, in this case, the resulting cube complex is special. Otherwise, if a non-separating curve is glued to a separating curve, then on the surface glued along a separating curve, each of the two hyperplanes parallel to the perimeter curve on this surface is glued to two hyperplanes on the other surface. That is, on the surface glued along a non-separating curve, the hyperplanes parallel to the perimeter curve and parallel to a curve around one genus (as drawn in blue in Figure \ref{fillings}) become part of one hyperplane in the union $X_i$. So, the number of hyperplanes restricted to the surface glued along the non-separating curve decreases. Nonetheless, by construction, the resulting complex is special, proving claim (3). 
  
  Finally, by condition (2), the cube complex structure on $X_i$ and $X_j$ have the same full pre-image in the $2$-fold cover $Y$ for all $i, j$. Thus, the universal covers of the cube complexes are isomorphic. Therefore, each group in $H(\mathcal{G})$ acts properly discontinuously and cocompactly by isometries on the same CAT$(0)$ cube complex, with each quotient a $2$-dimensional special non-positively curved cube complex. 
  \end{proof}

  \begin{corollary}
   {\it If $G_1, G_2 \in \mathcal{C}_S$ and $G_1$ and $G_2$ are abstractly commensurable, then $G_1$ and $G_2$ act properly discontinuously and cocompactly by isometries on the same $2$-dimensional CAT$(0)$ cube complex with each quotient a non-positively curved special cube complex.   }
  \end{corollary}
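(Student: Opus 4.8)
The plan is to deduce this directly from Proposition \ref{commoncubing}. Since abstract commensurability is an equivalence relation, the hypothesis that $G_1$ and $G_2$ are abstractly commensurable means precisely that they lie in a common abstract commensurability class $\mathcal{G} \subset \mathcal{C}_S$. First I would apply Proposition \ref{commoncubing} to this class $\mathcal{G}$ to produce a single $2$-dimensional CAT$(0)$ cube complex $X$ with the property that \emph{every} group in $\mathcal{G}$ acts properly discontinuously and cocompactly by isometries on $X$, and with the additional property that the quotient of $X$ by any such group is a non-positively curved special cube complex. Specializing this to the two elements $G_1, G_2 \in \mathcal{G}$ gives exactly the assertion of the corollary: both $G_1$ and $G_2$ act properly discontinuously and cocompactly by isometries on $X$, and $X/G_1$ and $X/G_2$ are non-positively curved special cube complexes.

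The only point needing a remark is the passage from ``abstractly commensurable'' to ``in the same abstract commensurability class,'' which is immediate once one recalls that, by Theorem \ref{classification}, the abstract commensurability classes partition $\mathcal{C}_S$. All of the substantive work — the choice of a filling collection of curves on each surface that descends to a common full preimage in the two-fold cover $Y$ of Lemma \ref{existcovers}, the verification that the resulting cube complexes are $2$-dimensional and special, and the reduction via Proposition \ref{maximalelement} to the finite family $\mathcal{H}(\mathcal{G})$ of near-maximal elements — is already carried out in the proof of Proposition \ref{commoncubing}. Consequently there is no genuine obstacle in this corollary; it is simply the two-group specialization of that proposition, and the proof is a one-line invocation.
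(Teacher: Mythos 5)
Your proof is correct and matches the paper's intent: the corollary is stated without a separate proof precisely because it is the two-group specialization of Proposition \ref{commoncubing}, and your invocation of that proposition (together with the observation that abstractly commensurable groups in $\mathcal{C}_S$ lie in a common class $\mathcal{G}$) is exactly the intended argument.
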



 \bibliographystyle{alpha}
\bibliography{ACQI.bib}

\end{document}